\newtheorem{propo}{Proposition}[section]
\newtheorem{corol}[propo]{Corollary}
\newtheorem{theor}[propo]{Theorem}
\newtheorem{lemma}[propo]{Lemma}
\theoremstyle{definition}
\newtheorem{defin}[propo]{Definition}
\theoremstyle{remark}
\newtheorem{remar}[propo]{Remark}
\numberwithin{equation}{section}
\newcommand{\ex }{\mathrm{exp}}
\newcommand{\lo }{\mathrm{log}}
\newcommand{\arc }{\mathrm{arctan}}
\title[Quantum affine superalgebra $U_{v}(A(0,2)^{(4)})$]{On the structure of quantum affine superalgebra $U_{v}(A(0,2)^{(4)})$}
\author{Fengchang Li}
\address{Department of Mathematics, Nanjing University, Nanjing 210093, China.}
\address{Philipps-Universit\"at Marburg,
	Fachbereich Mathematik und Informatik,
	Hans-Meerwein-Strasse,
	D-35032 Marburg, Germany.}
\email{dg1921005@smail.nju.edu.cn}
\thanks{The work  was supported by the Chinese Scholarship Council}
\begin{document}

\begin{abstract}
We research $U_{v}(A(0,2)^{(4)})^{+}$ defined by quantum Serre relations, when $v$ is not a root of unity. We prove that $U_{v}(A(0,2)^{(4)})^{+}$ is isomorphic to a Nichols algebra. In other words, it is equivalent to define $U_{v}(A(0,2)^{(4)})^{+}$ by quantum Serre relations and by the radical of the bilinear form. We determine all the root multiplicities and give a PBW basis of $U_{v}(A(0,2)^{(4)})^{+}$. 
\end{abstract}

\keywords{Hopf algebra, quantum affine (super)algebra, Nichols algebra, Lyndon word}
\subjclass[2020]{16T05, 17B37, 17B67}

\maketitle

\section*{Introduction}

The theory of quantum affine (super)algebras and their representations is very rich and has important applications in many branches of mathematics and mathematical physics. One of the most fundamental question is giving a PBW basis and the relations between corresponding PBW generators. For example, for untwisted quantum affine algebras, we have the  famous Drinfeld's second realization, see \cite{Drinfeld} and \cite{beck}. For twisted case see \cite{twistedaffine}. For quantum affine superalgebras, the corresponding work appeared first in \cite{yamane1}, which is foundational in this theory. Then e.g. \cite{HSTY08}. However in super case, there are still many unknown objects, even if in rank 2 case. By van de Leur's classification \cite{vandeleur}, there is a family $A(m,n)^{(t)}$, $t=1,2,4$ in affine Kac-Moody superalgebras. The Serre-type defining relations were determined also in \cite{yamane1}. Note that when $m=n$, the cardinal of these Serre-type defining relations is infinite (also in quantum case) because of the one dimensional center spanned by the identity matrix. In this paper we talk about $A(2m,2n)^{(4)}$. The main difficulty or feature in this family is there exist odd imaginary roots and imaginary root vectors do not necessarily commute with each other. In quantum case this difficulty still exists and is even more pronounced. There is some information on the root multiplicities. In \cite{EG}, when $v$ is transcendental, the root multiplicities of $U_v(A(2m,2n)^{(4)})$ are known, which coincide with $A(2m,2n)^{(4)}$. In general the structure of these quantum affine superalgebras is a long time unsolved problem due to the non-commutativity of the imaginary root vectors. We will consider the rank 2 case in this family, i.e. $U_{v}(A(0,2)^{(4)})^{+}$, which is defined by quantum Serre relations. One can also define $U_{v}(A(0,2)^{(4)})^{+}$ by the radical of the bilinear form, see \cite{yamane1}, for non-super case, see \cite{lusztigbook}. We will prove these two definitions are equivalent when $v$ is not a root of unity.

The theory of Nichols algebras play an important role in the classification of finite-dimensional pointed Hopf algebras, see \cite{classificationpointed}. It is well-known that the theory of Nichols algebras is closely related to the theory of quantum groups. Concretely in finite case, the positive part of a quantum group is a Nichols algebra when $v$ is not a root of unity, see Chapter 16 of \cite{heckenbergerbook}. In affine case, this also holds by \cite{heckenbergerbgg}.

In this paper we will consider first the Nichols algebra $B(\mathbb{V})$ of braided vector space $\mathbb{V}$,   whose braiding matrix is 
$
\begin{pmatrix}
    q & q^{-1} \\
    q^{-1} & -q 
\end{pmatrix}
$. This Nichols algebra first appeared in Cuntz's classification of rank 2 affine Nichols algebras \cite{cuntz}.  All the  Nichols algebras in this classification are quite open except the positive part of $U_{v}(\hat{sl_2})^{+}$, which has been very well-known. We believe these Nichols algebras have Lie theory background, especially some of them should come from the positive part of quantum (super)group.

The Dynkin diagrams of $U_{v}(\hat{sl_2})$ and $U_{v}(A(0,2)^{(4)})$ are almost the same, the only difference is $U_{v}(A(0,2)^{(4)})$ has an odd point. The corresponding Nichols algebra of $U_{v}(\hat{sl_2})^{+}$ has braiding matrix $
\begin{pmatrix}
    q & q^{-1} \\
    q^{-1} & q 
\end{pmatrix}
$, the only difference between this and the braiding matrix of our $B(\mathbb{V})$ is a negative sign. They share some common properties but are also very different, $B(\mathbb{V})$ is much more complicated. In this paper we will determine all the root multiplicities, give a PBW basis and prove that $B(\mathbb{V})$ is isomorphic to $U_{v}(A(0,2)^{(4)})^{+}$ when $v$ is not a root of unity.

Our main tool is Lyndon word theory, see \cite{karchenko}. We follow the notations in \cite{rootmul}. There are many important applications of Lyndon word theory. For example Heckenberger's celebrated work about Nichols algebras of diagonal type, see \cite{heckenbergerweylgroupoid}. In \cite{lyndonapp}, Angiono obtained a presentation by generators and relations of all Nichols algebras of diagonal type with finite root system.  We also use an important subquotient appeared in \cite{heckenbergersubquotient} and \cite{subquotient}.  In \cite{subquotient}, \cite{usesubquotient'}, \cite{usesubquotient},  the subquotient played a central role in the proof of the conjecture in \cite{AAH21} about the equivalence between finiteness of root system and finiteness of GK dimension. 

Moreover, in Drinfeld second realization, one of the most important technique is using exponential to describe distinguished imaginary root vectors. In this paper, exponential is also used and very beneficial. In addition, arctangent  will be introduced and play an important role.

The paper is organized as follows. In Section 1, we give some notations and introduce some basic theories. In Section 2, we derive the relations we need in $B(\mathbb{V})$ and find the central elements in the subquotient $K_{\ge 1}/K_{>1}$. With these information we can construct new distinguished imaginary root vector candidates using logarithm and arctangent in Section 3, which are primitive in the subquotient $K_{\ge 1}/K_{>1}$. Then we determine the multiplicities completely by the primitivity. In Section 4 we prove that all the commuting relations between root vectors of $B(\mathbb{V})$ can be derived from quantum Serre relations, which implies $B(\mathbb{V})$ and $U_{v}(A(0,2)^{(4)})^{+}$ are isomorphic.

\section{Preliminaries}
\label{sec:prelims}

In the whole paper, $q\in \mathbb{C}$ and is not a root of unity. Let $\theta=q-q^{-1}$, $[n]_{q}=\frac{q^n-q^{-n}}{q-q^{-1}}$.

All the brackets in this paper are braided brackets introduced in Section 1.1.  
\subsection{Nichols algebras of diagonal type} We refer to \cite{diagonaltype} for the definition of  a braided vector space of diagonal type and the definition of a Nichols algebra. 

Let $\mathcal{K}$ be a field. Let $s \in \mathbb{N}$, $X=\{x_1,...,x_s\}$. Let $\mathbb{X}$ be the set of words with letters in $X$.  Let $V$ be a $s$-dimensional vector space with basis $\{x_1,...,x_s\}$. We can identify $\mathcal{K}\mathbb{X}$ with $T(V)$. Then $T(V)$ is a $\mathbb{Z}^s$-graded algebra determined by deg $x_i=\alpha_i, 1\le i \le s$, where $\{\alpha_1,...,\alpha_s\}$ is the canonical basis of $\mathbb{Z}^s$.

Let $c$ be a braiding of $V$. The braided bracket of $x,y\in T(V)$ is defined by $$[x,y]_c=m\circ (id-c)(x\otimes y).$$

Assume that $(V,c)$ is of diagonal type with braiding matrix $(q_{ij})_{1\le i,i\le s}$ with respect to the basis $\{x_1,...,x_s\}$, and let $\chi:\mathbb{Z}^{s}\times \mathbb{Z}^{s}\rightarrow \mathcal{K}^{\times}$ be the bicharacter determined by the condition $\chi(\alpha_i,\alpha_j)=q_{ij}$ for each pair $1\le i,j\le s$. Then for each pair of $\mathbb{Z}^{s}$-homogeneous elements $u,v\in \mathbb{X}$, $c(u\otimes v)=q_{u,v}v\otimes u$, $q_{u,v}=\chi($deg $u,$deg $v)\in \mathcal{K}^{\times}$. In such case, the ``braided Jacobi identity" and skew derivations hold:
\begin{align}
[[u,v],w]&=[u,[v,w]]-\chi(\alpha,\beta)v[u,w]+\chi(\beta,\gamma)[u,w]v,\nonumber \\
[u,vw]&=[u,v]w+\chi(\alpha,\beta)v[u,w],\nonumber \\
[uv,w]&=\chi(\beta,\gamma)[u,w]v+u[v,w],\nonumber 
\end{align}
where $u$, $v$, $w$ are homogeneous elements in $T(V)$, $\alpha$, $\beta$, $\gamma$ are the degrees of $u$, $v$, $w$. If $\chi(\beta,\alpha)\chi(\alpha,\beta)=1$, then we have 

\begin{align}
[[u,v],w]=[u,[v,w]]-\chi(\alpha,\beta)[v,[u,w]].
\end{align}

\subsection{Braided vector space $\mathbb{V}$, the Nichols algebra $B(\mathbb{V})$ and the root system of $B(\mathbb{V})$}

Let $\mathbb{V}$ be a 2-dimensional braided vector space of diagonal type with braiding matrix $\begin{pmatrix}
    q & q^{-1} \\
    q^{-1} & -q 
\end{pmatrix}$, $q$ is not a root of unity. $B(\mathbb{V})$ is the main research object in this paper. We will prove $B(\mathbb{V})$ is isomorphic to $U_{v}(A(0,2)^{(4)})^{+}$ with $q=v^2$.

It is easy to see $B(\mathbb{V})$ is of affine Cartan type with Cartan matrix 
$
\begin{pmatrix}
    2 & -2 \\
    -2 & 2 
\end{pmatrix}$. Then the real roots of $B(\mathbb{V})$ are just the positive real roots of $\hat{sl_2}$, namely $(n+1)\alpha_1+n\alpha_2$ and $n\alpha_1+(n+1)\alpha_2$ for $n\ge0$. $B(\mathbb{V})$ also has imaginary roots. The only possible imaginary roots are $n\delta$, $\delta=\alpha_1+\alpha_2$ and $n\ge1$ but  the multiplicities are in general unknown.

Note that if $\beta$ is an imaginary root, then we have $\chi(\beta,\alpha)\chi(\alpha,\beta)=1$ for all $\alpha\in \mathbb{N}_0^2$, and hence (1.1) holds.

\begin{remar}
For $U_v(\hat{sl_2})^{+}$ when $v$ is not a root of unity, the corresponding Nichols algebra is $B(V)$, where $V$ has braiding matrix $\begin{pmatrix}
    q & q^{-1} \\
    q^{-1} & q 
\end{pmatrix}$ and $q=v^2$.
\end{remar}

\subsection{Differential operators}

We refer to Chapter 7 of \cite{heckenbergerbook}. 

\begin{defin}Define right and left differential operators $\partial_i^R,\partial_i^L: B(\mathbb{V})\rightarrow B(\mathbb{V}),1\le i\le 2$ by:

(1) $\partial_i^R(1)=\partial_i^L(1)=0$, $\partial_i^R(x_j)=\partial_i^L(x_j)=\delta_{ij}$ for all $1\le i,j\le2$.

(2) $\partial_i^R(xy)=x\partial_i^R(y)+\partial_i^R(x)\chi(\alpha_i,\beta)y$ for all $1\le i,j\le 2$ and $x,y\in B(\mathbb{V})$, $y$ is $\mathbb{N}_0^2$-homogeneous with degree $\beta$.  

(3) $\partial_i^L(xy)=\partial_i^L(x)y+\chi(\alpha,\alpha_i)x\partial_i^L(y)$ for all $1\le i,j\le 2$ and $x,y\in B(\mathbb{V})$, $x$ is $\mathbb{N}_0^2$-homogeneous with degree $\alpha$.
\end{defin}
\begin{propo}
For $x\in B(\mathbb{V})$ and $x$ is not a constant, in $B(\mathbb{V})$ we have 
\begin{align}
x=0   &\Leftrightarrow \partial_i^R(x)=0\ \forall 1\le i\le 2,\nonumber \\
                      &\Leftrightarrow \partial_i^L(x)=0\ \forall 1\le i\le 2.\nonumber
\end{align}
\end{propo}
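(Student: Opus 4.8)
\noindent\emph{Proposal for the proof.} The plan is to identify the skew derivations $\partial_i^L$ (resp.\ $\partial_i^R$) with homogeneous components of the coproduct of $B(\mathbb{V})$, and then to use coassociativity together with the defining property of a Nichols algebra (\cite{diagonaltype}) that its only primitive elements lie in degree one. I work with the $\mathbb{N}_0$-grading $B(\mathbb{V})=\bigoplus_{n\ge0}B^n$ by total degree, so that $\partial_i^L,\partial_i^R\colon B^n\to B^{n-1}$, and for $x\in B^n$ I write $\Delta(x)=\sum_{k=0}^n\Delta_{k,n-k}(x)$ with $\Delta_{k,n-k}(x)\in B^k\otimes B^{n-k}$; here $\Delta_{n,0}(x)=x\otimes1$, $\Delta_{0,n}(x)=1\otimes x$, and for $y\in B^{\ge1}$ one has $\Delta(y)\equiv y\otimes1+1\otimes y$ modulo $B^{\ge1}\otimes B^{\ge1}$ because $B^0=\mathcal{K}1$.

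The first step is to check that for $x\in B^n$, $n\ge1$,
\[\Delta_{1,n-1}(x)=\sum_i x_i\otimes\partial_i^L(x),\qquad \Delta_{n-1,1}(x)=\sum_i\partial_i^R(x)\otimes x_i .\]
For this I would verify that $x\mapsto(\text{the coefficient of }x_i\text{ in }\Delta_{1,n-1}(x))$ satisfies the three defining properties of $\partial_i^L$; the only point requiring care is the twisted Leibniz rule, which drops out of $\Delta$ being an algebra map for the braided tensor product once the congruence above is used to isolate the $B^1\otimes B^{\ge0}$-part of $\Delta(x)\Delta(y)$. By uniqueness of the operator determined by those three properties, this assignment equals $\partial_i^L$; the identity for $\partial_i^R$ is symmetric. (This is essentially \cite[Chapter 7]{heckenbergerbook}.)

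The heart of the argument is the claim: if $x\in B^n$ with $n\ge1$ and $\partial_i^L(x)=0$ for all $i$, then $x=0$, which I prove by induction on $n$. For $n=1$ it is immediate since $\partial_j^L(\sum_i c_i x_i)=c_j$. For $n\ge2$ the hypothesis gives $\Delta_{1,n-1}(x)=0$; comparing, via coassociativity $(\Delta\otimes\mathrm{id})\Delta=(\mathrm{id}\otimes\Delta)\Delta$, the components lying in $B^1\otimes B^k\otimes B^m$ with $k,m\ge1$ and $k+m=n-1$, one obtains $(\Delta_{1,k}\otimes\mathrm{id})(\Delta_{k+1,m}(x))=0$. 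Since $k+1\le n-1$, the inductive hypothesis says that $\Delta_{1,k}$ is injective on $B^{k+1}$ (its kernel consists of elements all of whose $\partial_i^L$ vanish), hence $\Delta_{k+1,m}(x)=0$. Therefore $\Delta_{j,n-j}(x)=0$ for all $1\le j\le n-1$, so $x$ is primitive; as $P(B(\mathbb{V}))=B^1$ and $n\ge2$, this forces $x=0$.

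Finally, for a general non-constant $x$, decompose $x=\sum_n x_n$ into homogeneous parts and let $N\ge1$ be largest with $x_N\ne0$ (such $N$ exists exactly because $x\notin\mathcal{K}1$). Since $\partial_i^L$ lowers degree by one, the degree-$(N-1)$ part of $\partial_i^L(x)$ equals $\partial_i^L(x_N)$, which by the claim is nonzero for some $i$; hence $\partial_i^L(x)\ne0$. This proves the first equivalence, and the $\partial_i^R$ equivalence follows by the symmetric argument with $\Delta_{n-1,1}$ in place of $\Delta_{1,n-1}$. I expect the only genuine obstacle to be getting the first step exactly right — pinning down the braiding factor in the Leibniz computation and the indexing bookkeeping in the coassociativity comparison; everything after that is formal.
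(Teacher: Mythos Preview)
Your argument is correct and is essentially the standard proof of this fact; the paper does not give its own proof but simply quotes the result from \cite[Chapter~7]{heckenbergerbook}, where the same identification of $\partial_i^L,\partial_i^R$ with homogeneous pieces of the coproduct and the characterization $P(B(\mathbb{V}))=\mathbb{V}$ are used. There is nothing to compare: your proposal supplies exactly the details the paper omits.
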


\subsection{An important subquotient}

We follow the notation in \cite{subquotient}. For our braided vector space $\mathbb{V}$, we know that $B(\mathbb{V})$ has a unique $\mathbb{N}_0^2$-grading as a braided Hopf algebra $$B(\mathbb{V})=\mathop{\bigoplus}\limits_{\alpha\in \mathbb{N}_0^2}B^{\alpha}(\mathbb{V}) $$ such that deg $x_1=\alpha_1$, deg $ x_2=\alpha_2$. Now set 

\begin{align*}
B_{\geq 1} &= \bigoplus_{\substack{\alpha = a_1\alpha_1+a_2\alpha_2 \in  \mathbb{N} _0^2:\\  a_1\ge a_2}} B^{\alpha} (\mathbb{V}), &
B_{> 1} &= \bigoplus_{\substack{\alpha = a_1\alpha_1+a_2\alpha_2 \in  \mathbb{N} _0^2:\\ a_1 > a_2}} B^{\alpha} (\mathbb{V}),
\\
K_{\geq 1}&=\{x\in B (\mathbb{V})\,|\, \Delta (x)\in B_{\geq 1}\otimes B (\mathbb{V})\},&
K_{>1}&=K_{\geq 1}\cap B_{>1}.
\end{align*}

The subquotient we use is $K_{\ge1}/K_{>1}$. 
\begin{propo}
$K_{\ge1}/K_{>1}$ is a braided Hopf algebra induced by the braided Hopf algebra structure of $B(\mathbb{V})$.
\end{propo}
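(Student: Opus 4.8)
The plan is to realize $K_{\ge 1}/K_{>1}$ as a quotient of a sub-object of $B(\mathbb{V})$ and transport the braided Hopf structure along that construction, following the pattern in \cite{subquotient} and \cite{heckenbergersubquotient}. First I would observe that $B_{\ge 1}$ is a $\mathbb{N}_0$-graded subalgebra of $B(\mathbb{V})$: the grading is by $a_1 - a_2$ for $\alpha = a_1\alpha_1 + a_2\alpha_2$, and since this functional is additive, the condition $a_1 \ge a_2$ is closed under multiplication and contains $1$; similarly $B_{>1}$ is an ideal of $B_{\ge 1}$ (indeed a graded ideal, consisting of the strictly positive degrees for the $a_1 - a_2$ grading, together with the fact that multiplying anything of degree $\ge 0$ by something of degree $\ge 1$ lands in degree $\ge 1$). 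So $B_{\ge 1}/B_{>1}$ is already an algebra; one checks it is nothing but $\bigoplus_{a\ge 0} B^{a\alpha_1 + a\alpha_2}(\mathbb{V})$ as a vector space, i.e. the ``diagonal'' part.

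Next I would analyze the coalgebra side. The key point is that $K_{\ge 1} = \{x : \Delta(x) \in B_{\ge 1}\otimes B(\mathbb{V})\}$ is a left coideal subalgebra of $B(\mathbb{V})$: it is a subalgebra because $B_{\ge 1}$ is a subalgebra and $\Delta$ is an algebra map, and it is a left coideal because applying $\Delta\otimes\mathrm{id}$ and coassociativity keeps the first tensor leg inside $B_{\ge 1}$. Then I would show $K_{>1} = K_{\ge 1}\cap B_{>1}$ is a two-sided ideal of $K_{\ge 1}$ and also a coideal in the appropriate sense, so that the quotient $K_{\ge 1}/K_{>1}$ inherits a well-defined multiplication and comultiplication. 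The comultiplication on the quotient is the subtle part: one needs that $\Delta$ descends to a map $K_{\ge 1}/K_{>1} \to (K_{\ge 1}/K_{>1}) \otimes (K_{\ge 1}/K_{>1})$. For this I would use the bigrading: for $x \in K_{\ge 1}$ write $\Delta(x) = \sum x_{(1)} \otimes x_{(2)}$ with $x_{(1)} \in B_{\ge 1}$; the claim is that modulo $K_{>1}$ one may also take $x_{(2)} \in K_{\ge 1}$, and that the projection to $B_{\ge 1}/B_{>1}$ in the first leg forces, by counitality and the grading, the second leg into $K_{\ge 1}$ modulo $K_{>1}$. This is exactly the mechanism isolated in \cite{subquotient}, and I would cite the corresponding statement there (or in \cite{heckenbergersubquotient}) rather than redo the diagram chase.

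The braiding and antipode then come along for free: the braiding $c$ on $B(\mathbb{V})$ restricts to $K_{\ge 1}$ (since $c$ preserves the $\mathbb{N}_0^2$-grading and $B_{\ge 1}$ is a sum of graded pieces) and descends to the quotient, and the antipode of $B(\mathbb{V})$ maps $K_{\ge 1}$ to itself and $K_{>1}$ to itself — here one uses that $S$ reverses the order of tensor legs but, because $B(\mathbb{V})$ is graded with the antipode acting degree-preservingly up to sign, the defining conditions of $K_{\ge 1}$ and $K_{>1}$ are stable under $S$ — hence induces an antipode on the quotient. Finally I would verify the Hopf axioms on the quotient hold because they hold in $B(\mathbb{V})$ and all the structure maps were obtained by restriction/quotient of the ones on $B(\mathbb{V})$, so compatibility is automatic once the maps are well-defined.

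The main obstacle is the well-definedness of the coproduct on the quotient, i.e. showing $\Delta(K_{>1}) \subseteq K_{\ge 1}\otimes K_{>1} + K_{>1}\otimes K_{\ge 1}$ modulo the identifications, equivalently that $\Delta$ induces $K_{\ge 1}/K_{>1} \to (K_{\ge 1}/K_{>1})^{\otimes 2}$; everything else (subalgebra, coideal, ideal, stability of $c$ and $S$ under the grading) is a routine grading bookkeeping. I expect to dispatch the hard step by directly invoking the general subquotient construction of \cite{subquotient}, whose hypotheses (a graded braided Hopf algebra together with a subsemigroup of the grading semigroup defining $B_{\ge 1}$, $B_{>1}$) are visibly satisfied here with the semigroup $\{a_1\alpha_1 + a_2\alpha_2 : a_1 \ge a_2\}$.
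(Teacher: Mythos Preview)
Your proposal is correct and matches the paper's approach exactly: the paper's entire proof is a one-line citation to Prop.~3.9 of \cite{subquotient}, which is precisely the general subquotient construction you propose to invoke. Your sketch simply unpacks what that cited proposition says and why its hypotheses are met here, so there is nothing to compare.
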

\begin{proof}
See Prop. 3.9 in \cite{subquotient}.

\end{proof}

\subsection{Lyndon word theory}

We follow the form of Lyndon word theory in \cite{rootmul}, for more details and a more general case see \cite{heckenbergersubquotient}. 

Follow the notations in Section 1.1. Define an ordering on $X$ by $x_1<x_2<...<x_s$, consider the lexicographic ordering on $\mathbb{X}$. Let $\mathbb{X}^{\times}$ be the set of nonempty words in $\mathbb{X}$. A word $u\in \mathbb{X}^{\times}$ is a Lyndon word if for any decomposition $u=vw$, $u,v\in \mathbb{X}^{\times}$, we have $u<w$.

A word $u\in \mathbb{X}^{\times}$ is a Lyndon word if and only if either $u\in X$, or there exist Lyndon words $v,w$ such that $v<w$ and $u=vw$.

For any word $u$ of length at least two has a unique decomposition into the product of two Lyndon words $u=vw$ where the length of $v$ is minimal. This decomposition is called Shirshov decomposition.

Recall that we identify $\mathcal{K}\mathbb{X}$ with $T(V)$. For a Lyndon word $u$, we denote $[u]\in T(V)$ inductively as follows:\\
(1) $[u]=u$, if $u\in X$,\\
(2) $[u]=[[v],[w]]$ where $u=vw$ is the Shirshov decomposition, if $|u|\ge 2$. $[,]$ is the braided bracket defined in Section 1.1.

We call $[u]$ super-letters. Then the total ordering on $\mathbb{X}$ induces a total ordering on the set of super-letters.

Let $\mathbb{L}$ be the set of Lyndon words. If $w=u_1 u_2 \cdots u_k$ for $u_1\ge u_2\ge \cdots \ge u_k$ and $u_1,...,u_k \in \mathbb{L}$ for some $k\in \mathbb{N}$, define $[w]$ by $[u_1] [u_2] \cdots [u_k]$. Denote $M_{> u}$ by the vector space spanned by all $[w]\in T(V)$ such that $w=v_1 v_2 \cdots v_k$, $v_1\ge v_2\ge \cdots \ge v_k > u$, $v_1,...,v_k \in \mathbb{L}$ for some $k\in \mathbb{N}$. 

\begin{propo}\label{comultiplicationlyndon}
Let $u\in \mathbb{L}$. Then $\Delta([u])-1\otimes [u]-[u]\otimes 1\in T(V)\otimes M_{> u}$.
\end{propo}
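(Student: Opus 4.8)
The plan is to prove the statement by induction on the length $|u|$ of the Lyndon word $u$, using the recursive definition $[u] = [[v],[w]]$ from the Shirshov decomposition together with the fact that $\Delta$ is an algebra morphism into the braided tensor product. The base case $|u|=1$ is immediate: $u = x_i \in X$ is primitive, so $\Delta([u]) - 1\otimes[u] - [u]\otimes 1 = 0 \in T(V)\otimes M_{>u}$ trivially. For the inductive step, write $u = vw$ for the Shirshov decomposition, so $v,w\in\mathbb{L}$, $v<w$, and both are strictly shorter than $u$; by induction $\Delta([v]) = 1\otimes[v] + [v]\otimes 1 + r_v$ and $\Delta([w]) = 1\otimes[w] + [w]\otimes 1 + r_w$ with $r_v \in T(V)\otimes M_{>v}$ and $r_w\in T(V)\otimes M_{>w}$.

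The first key step is to expand $\Delta([u]) = \Delta([[v],[w]]) = [\Delta([v]),\Delta([w])]$, where the bracket on the right is the braided commutator in the braided tensor product $T(V)\underline{\otimes}T(V)$. Multiplying out the six cross terms, the ``diagonal'' contributions $[1\otimes[v],\,1\otimes[w]]$ and $[[v]\otimes1,\,[w]\otimes1]$ reassemble into $1\otimes[u] + [u]\otimes 1$, while the cross term $[1\otimes[v],\,[w]\otimes1]$ produces (using the braiding) a term of the form $\pm\,[w]\otimes[v]$ up to a scalar $q_{w,v}$-type factor, plus terms involving $r_v$ and $r_w$. So the whole task reduces to showing that the error term lands in $T(V)\otimes M_{>u}$; the crucial point is that in every second tensor factor that appears, the word is a product of Lyndon words each of which is $> u$ in the lexicographic order.

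The main obstacle — and the heart of the argument — is the combinatorial fact that the relevant words (such as $v$ arising in $[w]\otimes[v]$, and the Lyndon components of the words appearing in $r_v$, $r_w$ after they get multiplied by $[v]$ or $[w]$) are all $>u$. Here one uses that $v$ is the \emph{minimal-length} factor in the Shirshov decomposition $u=vw$, which forces $u < v$ (a standard Lyndon-word inequality: the Shirshov left factor of a Lyndon word is strictly larger than the word itself), hence $v > u$; and that multiplying a super-letter word $[z_1]\cdots[z_k]$ with all $z_i > w > u$ by a single $[v]$ with $v>u$ on the appropriate side, then re-sorting into the PBW-type basis $\{[w']\}$, only introduces super-letter words whose factors remain $>u$ — this last point follows from the standard rewriting rules for brackets of super-letters (essentially the ``higher terms'' in the braided Jacobi identity (1.1) and the Shirshov–Lyndon straightening process). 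I would isolate this as a lemma: if $z_1\ge\cdots\ge z_k$ are Lyndon words all $>u$ and $v>u$ is Lyndon, then $[z_1]\cdots[z_k][v]$ and $[v][z_1]\cdots[z_k]$ lie in $M_{>u}$. Granting that lemma, one checks each of the cross terms in turn and concludes $\Delta([u]) - 1\otimes[u] - [u]\otimes 1 \in T(V)\otimes M_{>u}$, completing the induction. A reference such as \cite{heckenbergersubquotient} or \cite{karchenko} can be invoked for the straightening lemma if one prefers not to reproduce it.
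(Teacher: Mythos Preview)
Your overall strategy --- induction on $|u|$ via the Shirshov decomposition $u=vw$ and expansion of $\Delta([u])=[\Delta([v]),\Delta([w])]$ in the braided tensor square --- is the standard one and is what the cited reference (Prop.~3.6 of \cite{heckenbergersubquotient}) carries out; the paper itself gives no independent argument. But your cross-term analysis contains a genuine error.

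You assert that the Shirshov left factor satisfies $v>u$. This is false: $v$ is a proper prefix of the Lyndon word $u$, so $v<u$ in the lexicographic order. Hence a term $[w]\otimes[v]$ with second factor $[v]$ would be a real obstruction if it appeared. The point you are missing is that it does \emph{not} appear: in the braided tensor product one computes
\[
[\,1\otimes[v],\,[w]\otimes 1\,]_c
=\chi(\deg v,\deg w)\,[w]\otimes[v]-\chi(\deg v,\deg w)\,[w]\otimes[v]=0,
\]
whereas
\[
[\,[v]\otimes 1,\,1\otimes[w]\,]_c
=\bigl(1-\chi(\deg v,\deg w)\,\chi(\deg w,\deg v)\bigr)\,[v]\otimes[w].
\]
So the only surviving ``primitive against primitive'' cross term carries $[w]$ in the second slot, and $w>u$ because $w$ is a proper suffix of the Lyndon word $u$.

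There is a second, subtler gap in the handling of $r_v\in T(V)\otimes M_{>v}$: since $v<u$ one has $M_{>v}\supsetneq M_{>u}$, so you must explain why the second tensor factors of $r_v$ actually lie in $M_{>u}$. The missing observation is a length bound: every homogeneous piece of $r_v$ has both tensor factors of strictly positive degree, so the second factor has length $<|v|$. Each Lyndon letter $z$ occurring there therefore satisfies $z>v$ and $|z|<|v|$; then $z$ cannot have $v$ as a prefix, so $z$ and $v$ differ at some position $\le|z|$ with $z$ larger, whence $z>vw'$ for every word $w'$, in particular $z>vw=u$. With these two corrections, together with your straightening lemma (that $M_{>u}$ is a subalgebra --- this is standard), the induction goes through.
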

\begin{proof}
We refer to the proof of Prop. 3.6 in \cite{heckenbergersubquotient}.
\end{proof}

For $\alpha \in \mathbb{Z}^s$, let $o_{\alpha}\in \mathbb{N}\cup \infty$ be the order of $\chi(\alpha,\alpha)$. Let $O_{\alpha}=\{1,o_{\alpha},\infty \}$.

\begin{defin}
Let $w\in \mathbb{X}^{\times}$. We say that $[w]$ is a root vector candidate if $w=v^k$ for some Lyndon word $v$ and $k\in O_{deg\ v}\backslash \{\infty\}$.
\end{defin}

\begin{defin}
A root vector candidate $[w]$ is called a root vector of $B(V)$ if $[w]\in B(V)$ is not a linear combination of elements of the form $[v_k]^{m_k}\cdots[v_1]^{m_1}$, where $k, m_k,...,m_1 \in \mathbb{N}$ and $[v_1],...,[v_k]$ are root vector candidates or equivalently root vectors with $v_k>\cdots >v_1>w$.
\end{defin}

\begin{theor}
Let $\textbf{R} \subseteq \mathbb{X}^{\times}$ such that $w\in \textbf{R}$ if and only if $[w]$ is a root vector. Then the elements
\begin{align}
[v_k]^{m_k}\cdots[v_1]^{m_1},\ & k\in \mathbb{N},\  v_1,...,v_k\in \textbf{R},\  v_1<v_2<\cdots <v_k, \nonumber\\
&\forall 1\le i\le k,\ 0<m_i<o_{deg\ v_i},
\end{align}
form a vector space basis of $B(V)$.

\end{theor}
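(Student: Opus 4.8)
The plan is to establish the PBW-type basis by combining Lyndon word combinatorics with the differential-operator criterion (Proposition~1.6) and the comultiplication estimate (Proposition~\ref{comultiplicationlyndon}). First I would show that the monomials of the form \eqref{eq} in the statement are linearly independent. The key is an induction on the lexicographic ordering of the leading words: if a nontrivial linear combination of such monomials vanished, pick the combination with the largest leading Lyndon word $v_k$ among those appearing, apply a suitable iterated left differential operator $\partial_{i_1}^L\cdots\partial_{i_r}^L$ designed to strip off $[v_k]^{m_k}$ while killing every monomial whose leading word is $<v_k$, and use Proposition~1.6 (a nonconstant element of $B(\mathbb V)$ is zero iff all its differentials vanish) to force the coefficient to be zero; the comultiplication property from Proposition~\ref{comultiplicationlyndon}, which says $\Delta([u]) - 1\otimes[u] - [u]\otimes 1 \in T(V)\otimes M_{>u}$, is what makes the differential operators act triangularly with respect to this ordering, so that the leading term genuinely survives and the bound $0<m_i<o_{\deg v_i}$ is exactly the range in which $[v_i]^{m_i}$ is not itself expressible in terms of higher super-letters.

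Second I would show these monomials span $B(\mathbb V)$. Since $B(\mathbb V)=T(V)/I$ for the defining ideal $I$ of the Nichols algebra, and $T(V)$ has the classical Lyndon basis given by decreasing products of super-letters $[u_1]\cdots[u_k]$ with $u_1\ge\cdots\ge u_k$, it suffices to rewrite any such product modulo $I$ and modulo lower-order terms into the claimed normal form. This is done by the standard straightening argument: whenever a super-letter power $[v]^m$ occurs with $m\ge o_{\deg v}$, or whenever $[w]$ for a non–root-vector $w$ occurs, Definition~1.10 (the defining property of root vectors) together with the braided Jacobi identity and the skew-derivation identities of Section~1.1 lets one replace it, modulo $M_{>w}$, by a linear combination of products of strictly higher super-letters; iterating and using the fact that the ordering on words of fixed $\mathbb Z^s$-degree is a well-ordering, the process terminates and yields the normal form. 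Here one must check that reordering an increasing product $[v_1]\cdots[v_k]$ with $v_1<\cdots<v_k$ (as written in the statement) into a decreasing one, and vice versa, only produces terms that are again in the span of the claimed monomials — this follows from $\chi(\alpha,\beta)\chi(\beta,\alpha)$ being a scalar and the brackets $[[v_i],[v_j]]$ lying in $M_{>\min}$.

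The main obstacle I expect is the linear independence half, specifically the construction of the differential operator that isolates the top monomial: one needs that applying $\partial^L$-operators enough times to kill $[v_k]^{m_k}$ entirely returns, up to a nonzero scalar, a shorter expression in which the remaining monomials are still distinct elements of the claimed form of strictly smaller total degree, so that the induction hypothesis applies. Making the triangularity precise requires carefully tracking how $\partial_i^L$ interacts with the grading and with Proposition~\ref{comultiplicationlyndon} — essentially that $\partial_i^L([w])$ has leading word equal to $[w]$ with its first letter removed when that first letter is $x_i$, and lies in $M_{>\,(\text{that word})}$-span otherwise — and this bookkeeping, while not deep, is where all the work is. Everything else (the braided Jacobi identity, the existence of Shirshov decompositions, Proposition~1.6) is available from the preliminaries and is used essentially verbatim as in \cite{rootmul} and \cite{heckenbergersubquotient}.
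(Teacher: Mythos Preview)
The paper does not prove this theorem. It is stated in Section~1.5 as a known preliminary result, with the references \cite{karchenko}, \cite{rootmul}, and \cite{heckenbergersubquotient} given at the head of the subsection as the sources for the Lyndon word machinery; no argument is supplied in the text. There is therefore nothing in the paper to compare your proposal against.

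That said, your sketch is broadly in line with how the cited references establish the result: spanning by a straightening argument keyed to the definition of root vectors, and linear independence via the coproduct triangularity of Proposition~\ref{comultiplicationlyndon} (your differential-operator phrasing is equivalent, since $\partial_i^L$ extracts the $x_i\otimes(-)$ component of $\Delta$). Two small points. First, your internal numbering is off: the differential criterion is Proposition~1.3, not~1.6, and the root-vector definition is Definition~1.7, not~1.10. Second, in the spanning half you should be explicit about why the rewriting terminates: replacing a non--root-vector $[w]$ by a combination from $M_{>w}$ can introduce super-letters of the same $\mathbb Z^s$-degree, so one needs a well-ordering on pairs (degree, word) rather than on degree alone; this is handled in \cite{heckenbergersubquotient} and is exactly the bookkeeping you flag as the main obstacle.
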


Let $X_n$, $Y_n$ be the real root vector candidates corresponding to Lyndon word $x_1(x_1 x_2)^{n-1}$ and $(x_1 x_2)^{n-1}x_2$. Define $L_n$ by $[X_1,Y_n]$, $L_n'$ by $[X_2,Y_{n-1}]$ (it is convenient to define $L_1=L_1'=[X_1,Y_1]$ and $L_0=L_0'=\frac{1}{\theta}$). 

\begin{remar}
We can also define $L_n$ in the same way in $U_q(\hat{sl_2})^{+}$, then $L_n$ are essentially the same as $\psi_k$ in \cite{quantumaffinealgebras}. We have $[L_m, L_n]=0$ in $U_q(\hat{sl_2})$ by Drinfeld second realization. However, we will see in our case $L_m$ and $L_n$ do not necessarily commute. This is the main difficulty in this paper.
\end{remar}

We have the following order in terms of super-letters
\begin{align}
X_1&<X_2<\cdots<X_{n}<\cdots \nonumber \\
&<\cdots<L_n<L_{n-1}<L_{n-2}<\cdots<L_2 \nonumber \\
&<\cdots<M_{2n-1}<M_{2n-3}<\cdots<M_5<M_3<M_1 \nonumber \\
&<\cdots<Y_n<\cdots<Y_2<Y_1. \nonumber 
\end{align}

\begin{lemma}\label{lemma:realrootvectors}
If $X_n$ and $Y_n$ are not 0 in $B(\mathbb{V})$, then they are root vectors at corresponding degrees.
\end{lemma}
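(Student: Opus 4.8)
The plan is to show that neither $X_n$ nor $Y_n$ can be a linear combination of products $[v_k]^{m_k}\cdots[v_1]^{m_1}$ of root vector candidates with $v_k>\cdots>v_1>x_1(x_1x_2)^{n-1}$ (resp. $>(x_1x_2)^{n-1}x_2$), which by the definition of a root vector is exactly what must be verified. First I would recall that $X_n=[x_1(x_1x_2)^{n-1}]$ and $Y_n=[(x_1x_2)^{n-1}x_2]$ are indeed the super-letters attached to the Lyndon words $x_1(x_1x_2)^{n-1}$ and $(x_1x_2)^{n-1}x_2$, and that these Lyndon words have $\mathbb{N}_0^2$-degrees $n\alpha_1+(n-1)\alpha_2$ and $(n-1)\alpha_1+n\alpha_2$ respectively. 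The key structural input is the relation between the Lyndon-word ordering and the PBW-type spanning: if $[u]$ is a super-letter, then in $T(V)$ the word $u$ is the largest word (in the lexicographic order, when one expands brackets) appearing in $[u]$, with coefficient $1$; more precisely $[u]=u+\sum(\text{lower words})$. Combining this with the fact that a product $[v_k]^{m_k}\cdots[v_1]^{m_1}$ with all $v_i>u$ has leading word $v_k^{m_k}\cdots v_1^{m_1}>u$, one sees that such a product can never have $u$ as its leading word.

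The main step is therefore the following observation about the orders of the relevant characters: for the real roots $\beta=n\alpha_1+(n-1)\alpha_2$ and $\beta'=(n-1)\alpha_1+n\alpha_2$ one computes $\chi(\beta,\beta)$ and $\chi(\beta',\beta')$ from the braiding matrix $\begin{pmatrix} q & q^{-1}\\ q^{-1} & -q\end{pmatrix}$ and checks that since $q$ is not a root of unity these are not roots of unity either, so $o_{\beta}=o_{\beta'}=\infty$ and $X_n,Y_n$ are genuine root vector candidates with $k=1$ (i.e.\ the words $x_1(x_1x_2)^{n-1}$ and $(x_1x_2)^{n-1}x_2$ are not proper powers anyway). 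Then, assuming $X_n\neq 0$ in $B(\mathbb{V})$, I would argue by contradiction: if $X_n=\sum c_{\mathbf m,\mathbf v}[v_k]^{m_k}\cdots[v_1]^{m_1}$ with all $v_i>x_1(x_1x_2)^{n-1}$, compare leading words of both sides in $T(V)$ modulo the defining ideal of $B(\mathbb{V})$; the left side has leading word $x_1(x_1x_2)^{n-1}$, every term on the right has strictly larger leading word, and since the leading words of a PBW-type family are linearly independent modulo lower terms (this is the content of the Lyndon-word/Kharchenko machinery referenced via Theorem~1.?? and the preceding propositions), we get $X_n=0$, a contradiction. The same argument applies verbatim to $Y_n$.

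The step I expect to be the main obstacle is making precise the claim that ``$X_n$ having leading word $x_1(x_1x_2)^{n-1}$'' survives the passage from $T(V)$ to the quotient $B(\mathbb{V})$: one needs that the defining relations of $B(\mathbb{V})$ (the radical of the bilinear form) do not produce a rewriting of $x_1(x_1x_2)^{n-1}$ in terms of strictly larger words, equivalently that this word is not "reducible" in the sense of the Lyndon PBW theory. Here I would lean on Proposition~\ref{comultiplicationlyndon} together with the differential-operator criterion (Proposition~1.??): since $X_n\neq 0$ by hypothesis, some $\partial_i^L(X_n)\neq 0$, and tracking $\partial_i^L$ through a hypothetical expression of $X_n$ as a combination of products of strictly larger super-letters forces a contradiction with the known comultiplication formula, because the primitive-like leading term $x_1(x_1x_2)^{n-1}$ cannot be cancelled by higher words. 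In the writeup I would phrase this cleanly as: the hypothesis $X_n\neq 0$ means precisely that $x_1(x_1x_2)^{n-1}\in\mathbf{R}$ in the notation of the PBW theorem, and then root-vector-hood is immediate from the definition; so the real content is just the order computation showing these are root vector candidates plus the non-vanishing hypothesis.
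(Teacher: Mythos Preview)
Your approach is considerably more elaborate than the paper's, and the part you yourself flag as the obstacle is a genuine gap that you do not close. The paper's proof is two sentences: $X_n$ (resp.\ $Y_n$) is the lexicographically largest Lyndon word at degree $n\alpha_1+(n-1)\alpha_2$ (resp.\ $(n-1)\alpha_1+n\alpha_2$), and then a ``degree reason'' shows that no product $[v_k]^{m_k}\cdots[v_1]^{m_1}$ with all $v_i>x_1(x_1x_2)^{n-1}$ can land in degree $(n,n-1)$ at all. Once that is established, there is nothing for $X_n$ to be a linear combination of, so (given $X_n\neq 0$) it is a root vector by definition. The paper never compares leading words in $B(\mathbb{V})$ and never invokes differential operators; it is a purely combinatorial statement about degrees of Lyndon words larger than $x_1(x_1x_2)^{n-1}$.

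Your leading-word argument, by contrast, takes place in $T(V)$ and you are right to worry about the passage to $B(\mathbb{V})$: the equation $X_n=\sum c_j[v_{k_j}]^{m_{k_j}}\cdots[v_{1_j}]^{m_{1_j}}$ is an identity in the quotient, and the ``lower words'' appearing when you expand a product $[v_k]^{m_k}\cdots[v_1]^{m_1}$ can include $x_1(x_1x_2)^{n-1}$ itself, so the coefficient comparison you sketch does not go through as stated. Your suggested repair via $\partial_i^L$ and Proposition~\ref{comultiplicationlyndon} is not a proof: you would need to show that a hypothetical expression of $X_n$ in terms of larger super-letters is incompatible with $\Delta(X_n)-1\otimes X_n-X_n\otimes 1\in T(V)\otimes M_{>x_1(x_1x_2)^{n-1}}$, and that requires exactly the kind of control over $M_{>u}$ that you are trying to establish. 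Your closing sentence (``$X_n\neq 0$ means precisely that $x_1(x_1x_2)^{n-1}\in\mathbf{R}$'') is in fact a true general statement in the Kharchenko theory --- for a Lyndon word $u$, one has $[u]\neq 0$ in $B(V)$ if and only if $u\in\mathbf{R}$, because the normal form of $[u]$ has leading word $\leq u$ while any PBW expression with all factors $>u$ has leading word $>u$ --- but you neither state it as such nor justify it, so as written it reads as circular. If you want to pursue this route, cite or prove that equivalence explicitly; otherwise, follow the paper and argue directly that no product of the required shape exists at degree $(n,n-1)$.
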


\begin{proof}
Note that $X_n$ and $Y_n$ are the biggest super-letters at corresponding degrees. Then it follows from the definition of root vectors and degree reason.
\end{proof}

\begin{lemma} \label{subquotientlemma}
In $B(\mathbb{V})$ we have 
$$\Delta(Y_n)\equiv Y_n\otimes 1+1\otimes Y_n\ \ mod\ B_{\ge1}\otimes B(\mathbb{V})$$ and we have $X_n\in K_{\ge1}$, $L_n\in K_{\ge1}$, $Y_n\notin K_{\ge1}$, $X_n\in K_{>1}$, $L_n\notin K_{>1}$.

\end{lemma}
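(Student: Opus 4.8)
The plan is to reduce all six assertions to $\mathbb{N}_0^2$-degree bookkeeping together with Proposition~\ref{comultiplicationlyndon}. For $\gamma=a_1\alpha_1+a_2\alpha_2\in\mathbb{N}_0^2$ put $d(\gamma)=a_1-a_2$, so that $B_{\geq1}=\bigoplus_{d(\gamma)\geq0}B^{\gamma}(\mathbb{V})$ and $B_{>1}=\bigoplus_{d(\gamma)>0}B^{\gamma}(\mathbb{V})$; the function $d$ is additive, $B_{\geq1}$ is a unital subalgebra of $B(\mathbb{V})$, and since the braiding is $\mathbb{N}_0^2$-graded, $B_{\geq1}\otimes B(\mathbb{V})$ is a subalgebra of the braided tensor product $B(\mathbb{V})\otimes B(\mathbb{V})$. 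Hence $K_{\geq1}=\Delta^{-1}\bigl(B_{\geq1}\otimes B(\mathbb{V})\bigr)$ is a subalgebra of $B(\mathbb{V})$ containing $1$, closed under braided brackets of $\mathbb{N}_0^2$-homogeneous elements. Since $\deg X_n=n\alpha_1+(n-1)\alpha_2$, $\deg L_n=n\delta$ and $\deg Y_n=(n-1)\alpha_1+n\alpha_2$, we have $d(\deg X_n)=1>0$, $d(\deg L_n)=0$ and $d(\deg Y_n)=-1<0$; thus $X_n\in B_{>1}$ and $L_n\notin B_{>1}$, and once $X_n,L_n\in K_{\geq1}$ has been established we immediately obtain $X_n\in K_{\geq1}\cap B_{>1}=K_{>1}$ and $L_n\notin K_{>1}$.

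I first treat $\Delta(Y_n)$. Applying Proposition~\ref{comultiplicationlyndon} to the Lyndon word $u=(x_1x_2)^{n-1}x_2$ and pushing it forward along the canonical Hopf projection $\pi\colon T(V)\to B(\mathbb{V})$ gives $\Delta(Y_n)-1\otimes Y_n-Y_n\otimes1\in B(\mathbb{V})\otimes\pi(M_{>u})$. The combinatorial heart of the argument is the claim: \emph{every Lyndon word $v$ with $v>(x_1x_2)^{n-1}x_2$ satisfies $d(\deg v)\leq-1$.} For $n=1$ there is no such $v$; for $n\geq2$ such a $v$ either equals $x_2$, or begins with $x_1x_2$ and differs from $x_1x_2$, and a Lyndon word beginning with $x_1x_2$ can contain no factor $x_1x_1$ and cannot end with $x_1$, so each of its $x_1$'s is immediately followed by an $x_2$; this forces $\#x_1(v)\leq\#x_2(v)$, with equality only if $v=x_1x_2$. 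Therefore every $\mathbb{N}_0^2$-homogeneous element of $\pi(M_{>u})$ has degree $\gamma'$ with $d(\gamma')\leq-1$, and for each homogeneous summand $a\otimes b$ of $\Delta(Y_n)-1\otimes Y_n-Y_n\otimes1$ we get $d(\deg a)=d(\deg Y_n)-d(\deg b)\geq-1-(-1)=0$, i.e.\ $a\in B_{\geq1}$; this is the asserted congruence. It also gives $Y_n\notin K_{\geq1}$: modulo $B_{\geq1}\otimes B(\mathbb{V})$ one has $\Delta(Y_n)\equiv Y_n\otimes1$, a nonzero element of $B^{\deg Y_n}(\mathbb{V})\otimes B^{0}(\mathbb{V})$ (note $Y_n\neq0$) which does not lie in $B_{\geq1}\otimes B(\mathbb{V})$ because $d(\deg Y_n)<0$.

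Next, $L_n\in K_{\geq1}$. Write the congruence just proved as $\Delta(Y_n)=Y_n\otimes1+1\otimes Y_n+e_n$ with $e_n\in B_{\geq1}\otimes B(\mathbb{V})$ (and $e_1=0$), and recall $L_n=[X_1,Y_n]$ with $\Delta(X_1)=X_1\otimes1+1\otimes X_1$. Expanding $\Delta(X_1Y_n)$ and $\Delta(Y_nX_1)$ by the braided product and forming $\Delta(L_n)=\Delta(X_1Y_n)-\chi(\alpha_1,\deg Y_n)\,\Delta(Y_nX_1)$, the two summands proportional to $Y_n\otimes X_1$ cancel; every contribution of $e_n$ remains in $B_{\geq1}\otimes B(\mathbb{V})$ since $B_{\geq1}$ is a subalgebra containing $X_1$; and the remaining terms have left tensor factors among $L_n\in B^{n\delta}$, $1\in B^{0}$ and $X_1\in B^{\alpha_1}$, all of which lie in $B_{\geq1}$. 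Hence $\Delta(L_n)\in B_{\geq1}\otimes B(\mathbb{V})$, i.e.\ $L_n\in K_{\geq1}$.

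It remains to prove $X_n\in K_{\geq1}$, by induction on $n$. For $n=1$, $X_1=x_1$ is primitive with $x_1\in B^{\alpha_1}\subseteq B_{\geq1}$, so $\Delta(x_1)\in B_{\geq1}\otimes B(\mathbb{V})$, and $L_1\in K_{\geq1}$ by the case $n=1$ of the preceding step. For $n\geq2$, the Shirshov decomposition of $x_1(x_1x_2)^{n-1}$ is $\bigl(x_1(x_1x_2)^{n-2}\bigr)\bigl(x_1x_2\bigr)$, whence $X_n=[X_{n-1},L_1]$, a braided bracket of $\mathbb{N}_0^2$-homogeneous elements of the subalgebra $K_{\geq1}$, so $X_n\in K_{\geq1}$. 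Putting these together ($X_n,L_n\in K_{\geq1}$, $Y_n\notin K_{\geq1}$, $X_n\in B_{>1}$ and $L_n\notin B_{>1}$) yields the statement. The only step that is not pure degree bookkeeping is the combinatorial claim about Lyndon words in the second paragraph (whose verification is nonetheless short), together with the small cancellation in the computation of $\Delta(L_n)$.
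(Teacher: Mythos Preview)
Your proof is correct and follows exactly the paper's approach: invoke Proposition~\ref{comultiplicationlyndon} for $\Delta(Y_n)$, deduce $L_n\in K_{\ge1}$ from $L_n=[X_1,Y_n]$, and $X_n\in K_{\ge1}$ inductively from $X_n=[X_{n-1},L_1]$; your write-up simply spells out the combinatorics and the cancellation that the paper's three-sentence sketch leaves to the reader. The only caveat is that the assertions $Y_n\notin K_{\ge1}$ and $L_n\notin K_{>1}$ presuppose $Y_n\neq0$ and $L_n\neq0$, which you (and the paper) use without proof here---in the paper these are established later, in Corollaries~\ref{corol:Lnonzero} and~\ref{corol:realrootvectors}.
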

\begin{proof}
$\Delta(Y_n)$ follows from Prop. ~\ref{comultiplicationlyndon}. Since $L_n=[X_1,Y_n]$, then by the expression of $\Delta(Y_n)$, we have  $L_n\in K_{\ge1}$. In particular $L_1\in K_{\ge1}$, then immediately $X_n\in K_{\ge1}$. Other conclusions are obvious.
\end{proof}

Define $\hat{L}_{n+1}=[L_n, L_1]$, $\tilde{L}_n=\frac{1}{2}(L_n+L_n^{\prime})$ (It is convenient to define  $\tilde{L}_n=0$ for $n<0$). Define $M_{1}=L_1, M_3=[L_2,L_1], M_5=[L_2,[L_2,L_1]], M_7=[L_2,[L_2,[L_2,L_1]]] \cdots$

\section{Relations in $B(\mathbb{V})$}

\begin{lemma} \label{lemma1} $B(\mathbb{V})$ has the following basic properties,

(a) $[X_n,L_1^2]=X_{n+2}$ and $[L_1^2,Y_n]=Y_{n+2}$.   

(b) $L_{2n+1}^{\prime}=L_{2n+1}+[L_{2n},L_1]$, $L_{2n+2}^{\prime}=L_{2n+2}-[L_{2n+1},L_1]$.

(c) Fix n, if $\forall  i\le n$, $[L_{i-2},L_1^2]=0$, then $[X_{2k+1},Y_{n-2k}]=L_n$, $\forall 0\le k\le \frac{n-1}{2}$, $k \in \mathbb{N}_0$ and $[X_{2k+2},Y_{n-(2k+1)}]=L_n^{\prime}$, $\forall 0\le k\le \frac{n-2}{2}$, $k \in \mathbb{N}_0$.

(d) $[M_{2n+1},M_{2n-1}]=[L_2,M_{2n-1}^2]$ for all $n\ge1$.  If $[L_2,M_{2n-1}^2]=0$, then $[M_{2n+3},M_{2n-1}]=-2M_{2n+1}^2$.

\end{lemma}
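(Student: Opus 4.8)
The plan is to prove the four statements essentially in the order given, since each one feeds the next, using repeatedly the braided Jacobi identity, the two derivation identities from Section 1.1, and — crucially — the simplification (1.1), which is available whenever one of the arguments is imaginary (i.e. a multiple of $\delta$). For part (a), I would compute $[X_n, L_1^2]$ by first recording the defining brackets: $X_{n+1}=[X_n, L_1]$ essentially by construction (up to matching the Shirshov/Lyndon decomposition of $x_1(x_1x_2)^n$ with the bracket $[X_n, [X_1,Y_1]]$), and then iterate. Concretely, using $[uv,w]=\chi(\beta,\gamma)[u,w]v+u[v,w]$ and $[u,vw]=[u,v]w+\chi(\alpha,\beta)v[u,w]$ together with the scalars $q_{u,v}$ coming from the bicharacter, one expands $[X_n, L_1 L_1]$ and collects terms; the key point is that the degree of $L_1$ is $\delta$, so $\chi(\delta,\cdot)\chi(\cdot,\delta)=1$, which forces the cross terms to cancel and leaves exactly $X_{n+2}=[[X_n,L_1],L_1]$. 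The computation for $[L_1^2,Y_n]=Y_{n+2}$ is the mirror image. For part (b), I would start from the definitions $L_n'=[X_2,Y_{n-1}]$ and $L_n=[X_1,Y_n]$, write $X_2=[X_1,L_1]$ and $Y_n=[L_1,Y_{n-1}]$ (again matching Shirshov decompositions), and apply the Jacobi identity (1.0) to move the brackets around; the sign pattern distinguishing the even and odd cases will come from the $-q$ entry in the braiding matrix, i.e. from $\chi(\alpha_2,\alpha_2)=-q$, which is where the "super" nature enters. This is bookkeeping but one has to be careful that the bracket $[L_{2n},L_1]$ versus $[L_{2n+1},L_1]$ picks up opposite signs.

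For part (c), the hypothesis "$[L_{i-2},L_1^2]=0$ for all $i\le n$" should be read as saying that, up to degree $n\delta$, the $L_i$ are "central enough" that the various ways of bracketing an $X$ against a $Y$ to land in degree $n\delta$ all agree. I would argue by induction on $k$: for $k=0$ the claim $[X_1,Y_n]=L_n$ is the definition, and to pass from $[X_{2k+1},Y_{n-2k}]$ to $[X_{2k+3},Y_{n-2k-2}]$ I would write $X_{2k+3}=[X_{2k+1},L_1^2]$ and $Y_{n-2k-2}$ related to $Y_{n-2k}$ by bracketing with $L_1^2$ on the other side (part (a)), then use the Jacobi identity to slide $L_1^2$ across, the error terms being controlled by brackets of the form $[L_j, L_1^2]$ with $j\le n-2$, which vanish by hypothesis. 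The odd/even alternation between $L_n$ and $L_n'$ is handled using part (b). The main obstacle is organizing this induction so that the vanishing hypotheses are invoked only in the range where they hold; one has to track degrees carefully, because a single misplaced bracket can push you to degree $(n+1)\delta$ or higher where nothing is assumed.

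Part (d) is the most computational. I would compute $[M_{2n+1},M_{2n-1}]$ directly from the recursive definition $M_{2n+1}=[L_2, M_{2n-1}]$: by the Jacobi identity (1.0), applicable because $L_2$ has degree $2\delta$ which is imaginary,
\begin{align}
[M_{2n+1},M_{2n-1}]=[[L_2,M_{2n-1}],M_{2n-1}]=[L_2,[M_{2n-1},M_{2n-1}]]-\chi(2\delta,(2n-1)\delta)[M_{2n-1},[L_2,M_{2n-1}]].\nonumber
\end{align}
The scalar $\chi(2\delta,(2n-1)\delta)$ must equal $1$ (since $\delta$ is imaginary the self-pairing is trivial in the relevant sense), and $[M_{2n-1},[L_2,M_{2n-1}]]=[M_{2n-1},M_{2n+1}]=-[M_{2n+1},M_{2n-1}]$ up to the same trivial scalar, so one gets $2[M_{2n+1},M_{2n-1}]=[L_2,[M_{2n-1},M_{2n-1}]]=[L_2,M_{2n-1}^2]$ after identifying $[M_{2n-1},M_{2n-1}]$ with $M_{2n-1}^2$ (the bracket of a degree-$(2n-1)\delta$ element with itself being, up to the trivial self-scalar, twice its square — here one must check the factor of $2$ is absorbed correctly, which is exactly why the statement reads $[M_{2n+1},M_{2n-1}]=[L_2,M_{2n-1}^2]$ and not with a coefficient). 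For the second assertion, assuming $[L_2,M_{2n-1}^2]=0$, I would compute $[M_{2n+3},M_{2n-1}]=[[L_2,M_{2n+1}],M_{2n-1}]$ and use Jacobi plus the vanishing to reduce it to a multiple of $[M_{2n+1},M_{2n+1}]=2M_{2n+1}^2$, with the sign $-2$ emerging from the braiding scalars. I expect this sign-and-coefficient tracking in (d) — making sure the trivial-scalar claims $\chi(k\delta,l\delta)=1$ are correctly justified from the braiding matrix, and that the self-bracket/square conversions carry the right factor of $2$ — to be the main obstacle; everything else is a disciplined application of the three identities in Section 1.1.
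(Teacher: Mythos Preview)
Your approach is exactly the paper's: the paper's proof reads in full ``It follows from the braided Jacobi identity and skew derivations and definitions directly,'' and your proposal is a faithful expansion of that sentence through parts (a)--(d).

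One computational slip to fix in (d): you write $[M_{2n-1},M_{2n+1}]=-[M_{2n+1},M_{2n-1}]$, but in fact $\chi(\delta,\delta)=-1$ gives $\chi(k\delta,\ell\delta)=(-1)^{k\ell}$, so $\chi((2n{-}1)\delta,(2n{+}1)\delta)=-1$ and the braided bracket here is \emph{symmetric}: $[M_{2n-1},M_{2n+1}]=M_{2n-1}M_{2n+1}+M_{2n+1}M_{2n-1}=[M_{2n+1},M_{2n-1}]$. With that sign (and $[M_{2n-1},M_{2n-1}]=2M_{2n-1}^2$), the Jacobi step gives $[M_{2n+1},M_{2n-1}]=2[L_2,M_{2n-1}^2]-[M_{2n+1},M_{2n-1}]$, hence the stated identity; the second claim then follows exactly as you outline. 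So the obstacle you anticipated---that $\chi(k\delta,\ell\delta)$ is not literally $1$---is real, but once corrected your argument goes through unchanged.
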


We will use this lemma frequently and will not refer to it.

\begin{proof} It follows from the braided Jacobi identiy  and skew derivations and definitions directly.
\end{proof}

\subsection{Relations in root space with degree $\le 4 \delta$}
We have known quantum Serre relations holds in $B(\mathbb{V})$: \ $[X_1, X_2]=[Y_2, Y_1]=0$. In degree $\le 4 \delta$, we have the following relations. These relations are prepared for getting the central elements in next subsection and as the starting of the induction proof of Prop.~\ref{propo:realandim}.

\begin{propo}
In $B(\mathbb{V})$, the following relations hold.  \label{propo1}
\begin{subequations}
\begin{align}
[X_{1},L_{2}]&=(q+2)X_{3}-\theta L_{1}X_{2}\\
[X_{1},L_{2}']&=qX_{3}-\theta L_{1}X_{2}\\
[L_{2},Y_{1}]&=qY_{3}+\theta Y_{2}L_{1}\\
[L_{2}',Y_{1}]&=(q-2)Y_{3}+\theta Y_{2}L_{1}
\end{align}
\end{subequations}

\begin{subequations}
\begin{align}
&2L_{3}-\theta L_{1}L_{2}^{\prime}=\theta L_{2}L_{1}-[L_{2},L_{1}]\\
&2L_{3}'-\theta L_{2}^{\prime}L_{1}=\theta L_{1}L_{2}+[L_{2},L_{1}]\\
&\tilde{L}_3=\frac{1}{2}\theta L_1 \tilde{L}_2+ \frac{1}{2}\theta \tilde{L}_2 L_1
\end{align}
\end{subequations}

\begin{subequations}
\begin{align}
[X_{2},L_{2}]&=qX_{4}+\theta L_{1}X_{3}\\
[X_{2},L_{2}']&=(q-2)X_{4}+\theta L_{1}X_{3}\\
[L_{2},Y_{2}]&=(q+2)Y_{4}-\theta Y_{3}L_{1}\\
[L_{2}',Y_{2}]&=qY_{4}-\theta Y_{3}L_{1}
\end{align}
\end{subequations}

\begin{subequations}
\begin{align}
[L_2,L_{1}^2]&=0\  ([M_3,M_1]=0)\\
[L_3,L_{1}^2]&=0
\end{align}
\end{subequations}
\end{propo}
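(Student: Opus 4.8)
The plan is to prove all of Proposition~\ref{propo1} by explicit computation with the braided Jacobi identity, the skew-derivation formulas, and the quantum Serre relations $[X_1,X_2]=[Y_2,Y_1]=0$, working degree by degree starting from $2\delta$ and moving up to $4\delta$. The key structural inputs are: (i) the defining recursions $L_n=[X_1,Y_n]$, $L_n'=[X_2,Y_{n-1}]$, $X_{n+2}=[X_n,L_1^2]$, $Y_{n+2}=[L_1^2,Y_n]$ from Lemma~\ref{lemma1}(a); (ii) the bicharacter values $q_{u,v}=\chi(\deg u,\deg v)$ for all the relevant bidegrees, which are completely determined by the braiding matrix $\begin{pmatrix} q & q^{-1} \\ q^{-1} & -q \end{pmatrix}$ and in particular collapse to $1$ in imaginary degree $n\delta$; and (iii) Proposition~\ref{comultiplicationlyndon} plus Lemma~\ref{subquotientlemma} to control which terms can appear at each degree.

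First I would establish (2.1a)--(2.1d). These live in degree $3\delta$ and follow by expanding, e.g., $[X_1,L_2]=[X_1,[X_1,Y_2]]$ using the braided Jacobi identity (1.1)-type expansion together with $[X_1,X_2]=0$ and $[X_1,Y_1]=L_1$; the coefficients $q+2$, $q$, etc., come out of bookkeeping the $\chi$-values. The symmetric relations on the $Y$ side are obtained the same way (or by the anti-automorphism exchanging the two generators). Then (2.2a)--(2.2c) follow by combining the four relations in (2.1): compute $[L_2,L_1]=[L_2,[X_1,Y_1]]$, or equivalently expand $L_3=[X_1,Y_3]$ and $L_3'=[X_2,Y_2]$ via $Y_3=[L_1^2,Y_1]$, match terms, and add/subtract; relation (2.2c) is just the average $\tilde L_3=\tfrac12(L_3+L_3')$ of (2.2a) and (2.2b). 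Next, (2.3a)--(2.3d) are the degree-$4\delta$ analogues of (2.1): expand $[X_2,L_2]$, $[X_2,L_2']$ and the two $Y$-versions, now also using $[X_1,X_2]=0$ and the already-derived degree-$3\delta$ identities.

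The crucial and most delicate part is (2.4a) and (2.4b), the vanishing $[L_2,L_1^2]=0$ and $[L_3,L_1^2]=0$ — equivalently $[M_3,M_1]=0$ — since these are the relations that feed the ``central element'' analysis in the next subsection and are the base case of the induction in Proposition~\ref{propo:realandim}. I would prove $[L_2,L_1^2]=0$ by writing $[L_2,L_1^2]$ in terms of $[X_2,Y_2]$, $X_4$, $Y_4$ via (2.3), then using Lemma~\ref{lemma1}(c) (with $n\le 2$, whose hypothesis $[L_{i-2},L_1^2]=0$ holds trivially for $i\le 2$) to identify $[X_2,Y_2]=L_2'$ and $[X_1,Y_3]=L_3$, and checking that the combination collapses; alternatively, apply the differential operators $\partial_i^L$ and invoke Proposition~1.6 to reduce $[L_2,L_1^2]=0$ to an identity in lower degree. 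For $[L_3,L_1^2]=0$ I would bootstrap from $[L_2,L_1^2]=0$: since $L_3$ and $L_3'$ differ from combinations of $L_1L_2$, $L_2 L_1$, $[L_2,L_1]$ by (2.2), and $L_1^2$ commutes with $L_1$, the problem reduces to knowing $[L_2,L_1^2]=0$ together with $[[L_2,L_1],L_1^2]=0$, the latter following from the Jacobi identity once $[L_2,L_1^2]=0$ is in hand.

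The main obstacle I anticipate is not conceptual but combinatorial control of the many $\chi$-coefficients: in the super case the braiding has the extra sign $-q$ in the $(2,2)$-slot, so the coefficients in (2.1)--(2.3) are genuinely different from (and messier than) the $\hat{sl_2}$ case, and a single sign error propagates. To keep this manageable I would (a) fix once and for all a table of $\chi(\alpha,\beta)$ on the bidegrees $\{k\alpha_1+\ell\alpha_2 : k+\ell\le 4\}$ and the self-values $\chi(\alpha,\alpha)$, noting that every imaginary self-value is $1$ so that identity (1.1) is available in every imaginary degree; (b) systematically use the reduced expansion (1.1) whenever one of the three arguments has imaginary degree, which kills the unwanted middle terms; and (c) cross-check each derived identity against the differential-operator criterion (Proposition~1.6), which turns an asserted equality of long expressions into a short computation of $\partial_1^L$ and $\partial_2^L$. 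With those safeguards the proof is a finite, if lengthy, verification.
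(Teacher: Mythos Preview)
Your plan for (2.1), (2.2), (2.3) and (2.4b) is essentially the paper's own route: compute $[X_1,L_2']=[X_1,[X_2,Y_1]]$ first and get (2.1a) from $L_2-L_2'=2L_1^2$; obtain (2.2) by expanding $[[X_1,L_2],Y_1]$ two ways; and deduce (2.4b) from (2.2a) once (2.4a) is known. One difference worth noting: the paper derives (2.3a) not by direct expansion but by applying $[X_1,-]$ to (2.2b), which produces $(q+q^{-1}+1)[X_2,L_2]=(q+q^{-1}+1)(qX_4+\theta L_1X_3)$ and then divides by the nonzero factor. Your direct-expansion plan for (2.3) would also work but is messier, since $[X_2,X_1]=-\theta X_1X_2\neq 0$.

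The real gap is in your argument for (2.4a). Your primary description does not typecheck: $[L_2,L_1^2]$ lives in degree $4\delta$, whereas $X_4,Y_4$ are at real-root degrees and $[X_2,Y_2]=L_3'$ (not $L_2'$) is in degree $3\delta$, so ``writing $[L_2,L_1^2]$ in terms of'' these makes no sense. More to the point, a single Jacobi-type expansion does \emph{not} collapse here. The paper's mechanism is to produce \emph{two} independent scalar relations: applying $[\,\cdot\,,Y_2]$ to (2.1a) gives
\[
(q+2)[L_2,L_1^2]=\theta L_1L_3'-\theta L_3L_1,
\]
while applying $[\,\cdot\,,Y_1]$ to (2.3a) gives
\[
(q-2)[L_2,L_1^2]=\theta L_3'L_1-\theta L_1L_3.
\]
Adding yields $2q\,[L_2,L_1^2]=\theta\,[L_2,L_1^2]$, and since $2q\neq\theta$ when $q$ is not a root of unity, the bracket vanishes. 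This two-expressions-with-different-coefficients trick is the genuine content of the step, and it is exactly the pattern that gets iterated in Proposition~\ref{propo:central}.

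Your backup via differential operators is in fact a valid alternative, but you should make the dependence explicit: once (2.3a) is established, one computes directly that $\partial_1^R([L_2,L_1^2])=0$ (both factors have $\partial_1^R=0$) and $\partial_2^R([L_2,L_1^2])=q^{-1}\theta\,[X_2,L_2]-\theta X_4-q^{-1}\theta^2 L_1X_3$, which vanishes precisely by (2.3a). So Proposition~1.3 then gives (2.4a). This is a clean argument, different from the paper's, but it only works \emph{after} (2.3a); you should state that logical order rather than presenting it as an independent check.
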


\begin{proof}
$[X_1, L_2']=[X_1,[X_2,Y_1]]=qX_2L_1+q^{-1}L_1X_2=qX_3-\theta L_1X_2 $. Then $[X_1, L_2]=(q+2)X_3-\theta L_1X_2$. Now we have proved (2.1a),(2.1b). Similary (2.1c), (2.1d) also hold.

On one hand,
\begin{align}
 [[X_1,L_2],Y_1]&=[(q+2)X_3-\theta L_1X_2, Y_1] \nonumber \\
 &=(q+2)L_3-\theta(L_1L_2'-q^{-1}Y_2X_2). \nonumber
\end{align}

On the other hand, 
\begin{align*}
[[X_1,L_2],Y_1]&=[X_1,[L_2,Y_1]]-[L_2,L_1] \nonumber \\
&=[X_1, qY_3+\theta Y_2L_1]-[L_2, L_1]   \nonumber \\
&=qL_3+\theta(L_2L_1+q^{-1}Y_2X_2)-[L_2,L_1]. \nonumber
\end{align*}
Compare two sides we get (2.2a) and equivalently (2.2b). (2.2c) is the sum of (2.2a) and (2.2b).

Use $[X_1, \quad]$ act on (2.2b) we can get 
\begin{align}
(q+q^{-1}+1)[X_2, L_2]=q(q+q^{-1}+1)X_4+\theta (q+q^{-1}+1)L_1X_3. \nonumber
\end{align}

Since q is not a root of unity, we get (2.3a) and then immediately (2.3b). Similarly we can get (2.3c) and (2.3d).

Use $[\quad, Y_2]$ act on (2.1a), by similar calculation we can get 

$$(q+2)[L_2, L_1^2]=\theta L_1L_3'-\theta L_3L_1.$$ 

Use $[\quad, Y_1]$ act on (2.3a) we can get 

$$q[L_2, L_1^2]=\theta L_3'L_1-\theta L_1L_3-[L_2, L_2'],$$

 i.e. $$(q-2)[L_2, L_1^2]=\theta L_3'L_1-\theta L_1L_3.$$ 

Take the sum of these two equations we can get

$$2q[L_2, L_1^2]=\theta[L_2, L_1^2].$$ 

Since q is not a root of unity, we have $[L_2, L_1^2]=0$ or equivalently $[M_3, M_1]=0$. Then (2.4b) holds by (2.2a).

\end{proof}

\begin{remar} \label{remar1}
We will use the technique in this proof frequently later, to produce new relations in imaginary root space from known relations.

\end{remar}

\begin{remar}
Comparing to the case of $U_v(\hat{sl_2})$, (2.2a) tells us $L_3$ is no longer a root vector in terms of Lyndon word theory. We will see for $n> 1$, $L_{2n+1}$ are also no longer root vectors later.

\end{remar}

\subsection{Central elements in  $K_{\ge1}/K_{>1}$}

We will first prove $M_{2m+1}^2$, $\forall m\ge 0$ commute with $L_n$, $\forall n> 0$ in $B(\mathbb{V})$.

\begin{lemma} Suppose there is a number sequence $\left\{ a_n \right\}_{n\ge 1}$ in $\mathbb{C}$ and $a_{n+2}=Aa_{n+1}+Ba_n$, for $A,B\in \mathbb{C}$. Suppose the roots $x_1$, $x_2$ of $x^2-Ax-B=0$ are different, then 

$$a_n=\frac{x_2^{n-1}(a_2-x_1a_1)-x_1^{n-1}(a_2-x_2a_1)}{x_2-x_1}.$$

\label{lemmaseries}

\end{lemma}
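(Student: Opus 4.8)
The statement to prove is Lemma~\ref{lemmaseries}: a closed form for a linear recurrence $a_{n+2}=Aa_{n+1}+Ba_n$ with distinct characteristic roots $x_1,x_2$.

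The plan is to verify the formula by a straightforward induction on $n$, after first checking the base cases $n=1$ and $n=2$. For $n=1$ the right-hand side is $\frac{(a_2-x_1a_1)-(a_2-x_2a_1)}{x_2-x_1}=\frac{(x_2-x_1)a_1}{x_2-x_1}=a_1$, and for $n=2$ it is $\frac{x_2(a_2-x_1a_1)-x_1(a_2-x_2a_1)}{x_2-x_1}=\frac{(x_2-x_1)a_2}{x_2-x_1}=a_2$, using $x_1x_2$ cancelling. So the formula holds for $n=1,2$.

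For the inductive step, I would assume the formula holds for $n$ and $n+1$ and compute $Aa_{n+1}+Ba_n$ by substituting the closed forms. Collecting the coefficient of $x_2^{n-1}(a_2-x_1a_1)$ gives $\frac{1}{x_2-x_1}\bigl(Ax_2^{n}+Bx_2^{n-1}\bigr)=\frac{x_2^{n-1}}{x_2-x_1}(Ax_2+B)=\frac{x_2^{n-1}}{x_2-x_1}x_2^2=\frac{x_2^{n+1}}{x_2-x_1}$, because $x_2$ is a root of $x^2-Ax-B=0$, i.e. $x_2^2=Ax_2+B$. Symmetrically the coefficient of $x_1^{n-1}(a_2-x_2a_1)$ becomes $-\frac{x_1^{n+1}}{x_2-x_1}$. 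Hence $Aa_{n+1}+Ba_n=\frac{x_2^{(n+2)-1}(a_2-x_1a_1)-x_1^{(n+2)-1}(a_2-x_2a_1)}{x_2-x_1}=a_{n+2}$, which is the formula for index $n+2$; note the hypothesis $x_1\neq x_2$ is exactly what makes the denominator nonzero and the expression well-defined. This completes the induction.

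There is essentially no obstacle here: the only thing to be careful about is bookkeeping the two base cases (since it is a second-order recurrence, one base case does not suffice) and using the characteristic equation $x_i^2=Ax_i+B$ at the right moment in the inductive step. An alternative, equally routine approach would be to write the general solution as $a_n=\lambda x_1^{n-1}+\mu x_2^{n-1}$ — legitimate since $x_1\neq x_2$ makes $\{x_1^{n-1},x_2^{n-1}\}$ a basis of the solution space — and then solve the $2\times 2$ linear system coming from $n=1,2$ for $\lambda,\mu$; Cramer's rule immediately yields $\lambda=-\frac{a_2-x_2a_1}{x_2-x_1}$ and $\mu=\frac{a_2-x_1a_1}{x_2-x_1}$, giving the stated formula. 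I would present the induction version as the cleaner self-contained argument.
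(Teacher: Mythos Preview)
Your proof is correct; both the induction argument and the alternative via the general solution $a_n=\lambda x_1^{n-1}+\mu x_2^{n-1}$ are valid and cleanly written. The paper itself states this lemma without proof, treating it as a standard fact about second-order linear recurrences, so your write-up is already more explicit than what appears there.
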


The following proposition plays an important role in this paper. There are several very useful corollaries after it. Moveover, in next subsection, we will write  this proposition and its corollaries in the form of series, which will be used to determine the root multiplicities completely.

\begin{propo} \label{propo:central}For $k\ge 0$,we have the following relations in $B(\mathbb{V})$. 

(a)$\tilde{L}_{4k+1}=\frac{1}{2} \theta L_1\tilde{L}_{4k}+ \frac{1}{2}\theta \tilde{L}_{4k} L_1+\frac{1}{4}[\tilde{L}_{4k-2},M_{3}]$.

(b)$[\tilde{L}_{4k},L_1]=\frac{1}{2} \theta M_3\tilde{L}_{4k-2}+\frac{1}{2} \theta \tilde{L}_{4k-2}M_3+\frac{1}{4}[\tilde{L}_{4k-4},M_5]$.

(c)$[\tilde{L}_{4k+2},L_1]=\frac{1}{2} \theta M_3\tilde{L}_{4k}+ \frac{1}{2} \theta \tilde{L}_{4k}M_3+\frac{1}{4}[\tilde{L}_{4k-2},M_5]$.

(d)$\tilde{L}_{4k+3}=\frac{1}{2} \theta L_1\tilde{L}_{4k+2}+ \frac{1}{2} \theta\tilde{L}_{4k+2} L_1+\frac{1}{4}[\tilde{L}_{4k},M_3]$.

(e)$[\tilde{L}_{n},L_1^2]=0=[L_{n},L_1^2]$ for $n=4k,4k+1,4k+2,4k+3$.

(f)$[\tilde{L}_2,\tilde{L}_{4k}]=0$ and $[\tilde{L}_2,\tilde{L}_{4k+2}]=0$.

\end{propo}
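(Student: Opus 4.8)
The plan is to prove all six statements simultaneously by induction on $k$, since they are manifestly intertwined: (a)--(d) are ``derivation-type'' identities relating $\tilde{L}_n$ at consecutive indices, (e) is the centrality of $L_1^2$, and (f) is the centrality of $\tilde{L}_2$. The base case $k=0$ is essentially the content of Proposition~\ref{propo1}: (2.2c) gives (a) for $k=0$ (the bracket term $[\tilde{L}_{-2},M_3]$ vanishes by the convention $\tilde{L}_n=0$ for $n<0$), (2.4a),(2.4b) give (e) for small $n$, and (d) for $k=0$ will follow by applying $[L_1^2,\ -\,]$ or $[\ -\,,L_1^2]$-type operators to the degree-$\le 4\delta$ relations exactly as in the proof of Proposition~\ref{propo1}. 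So the real work is the inductive step.

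The engine throughout is the technique highlighted in Remark~\ref{remar1}: take a known relation living in some imaginary (or real) root space, apply a braided bracket $[X_i,\ -\,]$ or $[\ -\,,Y_j]$ to it, expand both sides using the braided Jacobi identity (1.1) (valid here because everything of interest sits in imaginary degree, where $\chi(\beta,\alpha)\chi(\alpha,\beta)=1$) and the skew-derivation rules, use Lemma~\ref{lemma1}(a)--(d) to rewrite $[X_n,L_1^2]=X_{n+2}$, $[L_1^2,Y_n]=Y_{n+2}$, $\hat L_{n+1}=[L_n,L_1]$, the $M_{2n+1}$ recursions, and the $L'$ versus $L$ relations from Lemma~\ref{lemma1}(b), and finally divide by a nonzero scalar of the form $[m]_q$ or $q-$(root of unity) using that $q$ is not a root of unity. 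Concretely I would: first use the inductive hypothesis (e) at level $<k$ together with Lemma~\ref{lemma1}(c) to guarantee that $[X_{2j+1},Y_{m-2j}]=L_m$ and $[X_{2j+2},Y_{m-2j-1}]=L'_m$ for all relevant $m\le 4k$, which is what lets one ``move brackets around'' freely in that range; then derive (a) and (d) for level $k$ by acting with $[X_1,\ -\,]$ on the appropriate expression for $[\tilde L_{4k},Y_1]$ (resp.\ $[\tilde L_{4k+2},Y_1]$), and (b),(c) for level $k$ by acting with $[\ -\,,Y_1]$ or $[\ -\,,Y_2]$ on (a),(d); and finally extract (e) at level $k$ by acting once more with $[X_1,\ -\,]$ on (b),(c) and comparing, which should produce an equation of the shape $2q\,[\tilde L_n,L_1^2]=\theta\,[\tilde L_n,L_1^2]$ forcing $[\tilde L_n,L_1^2]=0$, whence $[L_n,L_1^2]=0$ via Lemma~\ref{lemma1}(b). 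Statement (f) then follows because $[\tilde L_2,\tilde L_{2m}]$ can be rewritten, using (e) and $\tilde L_2=\tfrac12(L_2+L_2')$ together with the definitions $M_3=[L_2,L_1]$ and the relation $[M_3,M_1]=[L_2,L_1^2]\cdot(\text{scalar})$, as a sum of terms each containing a factor $[\tilde L_{2j},L_1^2]=0$.

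The main obstacle I anticipate is bookkeeping the exact scalar coefficients: the braiding matrix $\begin{pmatrix} q & q^{-1}\\ q^{-1} & -q\end{pmatrix}$ has the crucial minus sign in the $(2,2)$ entry, so the $\chi$-values picked up when commuting $X_2$ or $Y_2$ past imaginary root vectors differ by signs from the $\hat{sl_2}$ case, and these signs are precisely what make $L_m,L_n$ fail to commute. Getting the $\frac12\theta$ and $\frac14$ coefficients in (a)--(d) exactly right, and checking that the ``divide by $[m]_q$'' steps never divide by something that is zero for non-root-of-unity $q$, is where care is needed; I would organize this by first proving the $L$-only relations (the analogues of (2.2a),(2.2b)) in each imaginary degree and only then symmetrizing to the $\tilde L$-form, since the symmetrization is what produces the clean coefficients and kills the genuinely non-commutative part. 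A secondary subtlety is that (a)--(d) and (e) must be proved in the correct internal order within a single value of $k$ (one needs (e) at level $k-1$ and partial information at level $k$ before (e) at level $k$ is available), so the induction statement should be phrased as ``(a)--(f) hold for all $k'<k$, and (a)--(d) hold for $k$'' feeding into ``(e),(f) hold for $k$''.
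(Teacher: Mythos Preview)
Your overall architecture is right: simultaneous induction on $k$, base case read off from Proposition~\ref{propo1}, and the engine is exactly the ``act by $[\,\cdot\,,Y_m]$ or $[X_i,\cdot\,]$ on a known relation, expand with braided Jacobi, compare'' technique of Remark~\ref{remar1}. The way the paper actually produces (a) and (b) at level $k+1$ is to act by $[\,\cdot\,,Y_{4k+3}]$ on (2.1a) and by $[\,\cdot\,,Y_{4k+2}]$ on (2.3a), then take the sum (giving (a)) and the difference (giving (b), after substituting the already known (d) at level $k$). Your version, acting by $[X_1,\,\cdot\,]$ on $[\tilde L_{4k},Y_1]$, is a symmetric variant of the same idea and should work, modulo getting the coefficients straight.

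The genuine gap is in your plan for (e). You expect that one more application of the bracket will yield an equation of the shape $2q\,[\tilde L_n,L_1^2]=\theta\,[\tilde L_n,L_1^2]$, as in the proof of $[L_2,L_1^2]=0$. That is not what happens beyond the base case. When you compute $[\tilde L_{4k+4},L_1^2]=[[\tilde L_{4k+4},L_1],L_1]$ using (b), what emerges is not a scalar multiple of itself but a two-term linear recursion
\[
[\tilde L_{4k+4},L_1^2]=A\,[\tilde L_{4k},M_3^2]+B\,[\tilde L_{4k-4},M_5^2],\qquad A=-\tfrac14\theta^2-\tfrac12,\ B=-\tfrac1{16},
\]
and iterating pushes the index down to $[\tilde L_4,M_{2k+1}^2]$, which one checks vanishes directly. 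So already for $n=4k+4$ one needs this descending recursion, not a single scalar equation.

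The case $n=4k+6$ is harder still and is where your sketch really breaks. The same recursion gives $[\tilde L_{4k+6},L_1^2]=a_{k+2}\,[\tilde L_2,M_{2k+3}^2]$ for some explicit numerical sequence $a_n$ satisfying $a_{n+2}=Aa_{n+1}+Ba_n$, but now the bottom term $[\tilde L_2,M_{2k+3}^2]$ is \emph{not} obviously zero. One must produce a \emph{second}, independent expression for $[\tilde L_{4k+6},L_1^2]$ (the paper gets it by writing $\tilde L_{4k+7}$ via (d) and rearranging), which yields $(3+\theta^2)[\tilde L_{4k+6},L_1^2]=-\tfrac14 a_{k+1}[\tilde L_2,M_{2k+3}^2]$. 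The conclusion $[\tilde L_{4k+6},L_1^2]=0$ then comes from the $2\times 2$ determinant $\begin{vmatrix}1 & a_{k+2}\\ 3+\theta^2 & -\tfrac14 a_{k+1}\end{vmatrix}\neq 0$, which is where Lemma~\ref{lemmaseries} and the hypothesis that $q$ is not a root of unity are actually used. Your proposal does not anticipate this two-equation/determinant step, and without it the induction does not close. Likewise, (f) is not a formal consequence of (e) as you suggest; it is obtained by deriving $[L_2,L_{4k+4}]=\theta L_1 L_{4k+5}'-\theta L_{4k+5}L_1$ and its primed partner via the same ``act by $[\,\cdot\,,Y_m]$'' method, and then using $[L_{4k+4},L_1^2]=0$.
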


\begin{proof} We have known it holds for $k=0$ from Prop.~\ref{propo1}. Suppose it holds for k, we consider the case of k+1. We denote  anti bracket by $[\quad ,\quad ]'$, i.e. $[x,y]'=m\circ (id+c)(x\otimes y)$.

We have known $[L_{i}, L_1^2]=0$ for $i \le 4k+3$ from induction hypothesis. Then by  Lemma~\ref{lemma1}, $[X_3, Y_{4k+3}]=L_{4k+5}$ and $[X_2, Y_{4k+3}]=L_{4k+4}'$. Use $[\quad, Y_{4k+3}]$ act on (2.1a), on one hand we have 
\begin{align}
[[X_1, L_2], Y_{4k+3}]&=[(q+2)X_3-\theta L_1X_2, Y_{4k+3}]\nonumber \\
                      &=(q+2)L_{4k+5}-\theta (L_1 L_{4k+4}'-q^{-1}Y_{4k+4}X_2). \nonumber
\end{align}

On the other hand, from Lemma ~\ref{lemma1}, (2.1c) and induction hypothesis we have $[L_2, Y_{4k+3}]=qY_{4k+5}+\theta Y_{4k+4}L_1$, then
\begin{align}
 [[X_1, L_2], Y_{4k+3}]&=[X_1, qY_{4k+5}+\theta Y_{4k+4}L_1]-[L_2, L_{4k+3}]  \nonumber \\
&=qL_{4k+5}+\theta(L_{4k+4}L_1+q^{-1}Y_{4k+4}X_2)-[L_2, L_{4k+3}].\nonumber
\end{align}

Compare two equations we get $2L_{4k+5}=\theta L_1L_{4k+4}'+\theta L_{4k+4}L_1-[L_2, L_{4k+3}]$. 

Smilarly, use $[\quad, Y_{4k+2}]$ act on (2.3a), on one hand we have 
\begin{align}
[[X_2, L_2], Y_{4k+2}]&=[qX_4+\theta L_1X_3, Y_{4k+2}]  \nonumber \\
&=qL_{4k+5}'+\theta (L_1L_{4k+4}+q^{-1}Y_{4k+3}X_3).\nonumber
\end{align}
 On the other hand we have 
\begin{align}
[[X_2, L_2], Y_{4k+2}]&=[X_2, (q+2)Y_{4k+4}-\theta Y_{4k+3}L_1]-[L_2, L_{4k+3}'] \nonumber \\
&=(q+2)L_{4k+5}'-\theta (L_{4k+4}'L_1-q^{-1}Y_{4k+3}X_3)-[L_2, L_{4k+3}'].\nonumber
\end{align}

Compare both sides we get $2L_{4k+5}'=\theta L_{4k+4}'L_1+\theta L_1 L_{4k+4}+[L_2, L_{4k+3}']$.

Take the sum of the last equations of these two paragraphs we get 
\begin{align}
\tilde{L}_{4k+5}&=\frac{1}{2}\theta L_1 \tilde{L}_{4k+4}+\frac{1}{2}\theta L_1 \tilde{L}_{4k+4}+[L_2, [L_{4k+2},L_1]] \nonumber\\
&=\frac{1}{2}\theta L_1 \tilde{L}_{4k+4}+\frac{1}{2}\theta L_1 \tilde{L}_{4k+4}+[L_2, [\tilde{L}_{4k+2},L_1]] \nonumber\\
&=\frac{1}{2}\theta L_1 \tilde{L}_{4k+4}+\frac{1}{2}\theta L_1 \tilde{L}_{4k+4}+[\tilde{L}_{4k+2},M_3]. \nonumber
\end{align}
 Then (a) holds.

Take the difference of the last equations of these two paragraphs we get 
\begin{align}
[\tilde{L}_{4k+4}, L_1]&=[L_2, \tilde{L}_{4k+3}]\nonumber \\
&=[L_2, \frac{1}{2} \theta L_1\tilde{L}_{4k+2}+ \frac{1}{2} \theta\tilde{L}_{4k+2} L_1+\frac{1}{4}[\tilde{L}_{4k},M_3]] \nonumber\\
&=\frac{1}{2} \theta M_3 \tilde{L}_{4k+2}+\frac{1}{2} \theta  \tilde{L}_{4k+2}M_3+\frac{1}{4}[\tilde{L}_{4k},M_5]. \nonumber
\end{align}

Then (b) holds.

Now we have 
\begin{align}
[\tilde{L}_{4k+4},L_1^2]=&[[\tilde{L}_{4k+4},L_1],L_1] \nonumber \\
=&\frac{1}{2} \theta[M_3,[\tilde{L}_{4k+2},L_1]]'+\frac{1}{4}[[\tilde{L}_{4k},M_5],L_1]\nonumber\\
=&(\frac{1}{2} \theta)^2[M_3^{2},\tilde{L}_{4k}]+\frac{1}{2} \theta \frac{1}{4}[M_3,[\tilde{L}_{4k-2},M_5]]'\nonumber\\
&-\frac{1}{2}[\tilde{L}_{4k},M_3^2]-\frac{1}{4}[M_5, \frac{1}{2} \theta M_3\tilde{L}_{4k-2}+\frac{1}{2} \theta \tilde{L}_{4k-2}M_3+\frac{1}{4}[\tilde{L}_{4k-4},M_5] ]\nonumber\\
=&(\frac{1}{2} \theta)^2[M_3^{2},\tilde{L}_{4k}]-\frac{1}{2}[\tilde{L}_{4k},M_3^{2}]-(\frac{1}{4})^2[\tilde{L}_{4k-4},M_5^2]\nonumber\\
=&(-\frac{1}{4}\theta^2-\frac{1}{2})[\tilde{L}_{4k},M_3^{2}]-(\frac{1}{4})^2[\tilde{L}_{4k-4},M_5^2]. \nonumber
\end{align}
(for $k=0$, we have $[\tilde{L}_4,L_1^2]=\frac{1}{2}\theta[M_3, M_3]'=0.$)

Define A=$-\frac{1}{4}\theta^2-\frac{1}{2}$, B=$-(\frac{1}{4})^2$, we have 

$[\tilde{L}_{4k+4},L_1^2]=A[\tilde{L}_{4k},M_3^2]+B[\tilde{L}_{4k-4},M_5^2].$

More over similarly we have $[\tilde{L}_{4k},M_3^2]=A[\tilde{L}_{4k-4},M_5^2]+B[\tilde{L}_{4k-8},M_7^2]\cdots $ and the last one is $[\tilde{L}_8,M_{2k-1}^2]=A[\tilde{L}_4,M_{2k+1}^2]$.

\begin{align}
[\tilde{L}_4,M_{2k+1}^2]&=[[\tilde{L}_4, M_{2k+1}], M_{2k+1}]  \nonumber\\
&=[\frac{1}{2} \theta M_{2k+3}\tilde{L}_{2}+\frac{1}{2} \theta \tilde{L}_{2}M_{2k+3}, M_{2k+1}]=0.\nonumber
\end{align}

So now we have proved $[\tilde{L}_{4k+4},L_1^2]=0=[L_{4k+4},L_1^2]$ and moreover  $[\tilde{L}_{4k+4-4i},M_{2i+1}^2]=0$ for $i\ge 0$. Then automatically $[\tilde{L}_{4k+5},L_1^2]=0=[L_{4k+5},L_1^2]$.

Using induction hypothesis and the same technique in the proof of (a) and (b), we can get $[L_2,L_{4k+4}]=\theta L_1L_{4k+5}^{\prime}-\theta L_{4k+5}L_1$ and $[L_2,L_{4k+4}^{\prime}]=\theta L_{4k+5}^{\prime}L_1-\theta L_1L_{4k+5}$. Since we have known $[L_{4k+4},L_1^2]=0=[\tilde{L}_{4k+4},L_1^2]$, then $[L_2,\tilde{L}_{4k+4}]=0=[\tilde{L}_2,\tilde{L}_{4k+4}]$.

Also using induction hypothesis and the same technique in the proof of (a) and (b), we have (c),(d) holds for k+1.

The last and most difficult part is  to prove $[\tilde{L}_{4k+6},L_1^2]=0$.

We have known 
\begin{align}
[\tilde{L}_{4k+6},L_1]&=[L_2,\tilde{L}_{4k+5}]  \nonumber\\
&=\frac{1}{2} \theta M_3\tilde{L}_{4k+4}+ \frac{1}{2} \theta \tilde{L}_{4k+4}M_3+\frac{1}{4}[\tilde{L}_{4k+2},M_5],  \nonumber
\end{align}
then we have two equations about $[\tilde{L}_{4k+6},L_1^2]$:
\begin{align}
1.\ [\tilde{L}_{4k+6},L_1^2]&=[[\tilde{L}_{4k+6},L_1],L_1] \nonumber\\
=&\frac{1}{2} \theta [M_3,[\tilde{L}_{4k+4},L_1]]'+\frac{1}{4}[[\tilde{L}_{4k+2},M_5],L_1] \nonumber\\
=&(\frac{1}{2} \theta)^2[M_3^2,\tilde{L}_{4k+2}]+\frac{1}{2} \theta \frac{1}{4}[M_3,[\tilde{L}_{4k},M_5]]' \nonumber\\
&+\frac{1}{4}[\tilde{L}_{4k+2},-2M_3^2]-\frac{1}{4}[M_5,[\tilde{L}_{4k+2},L_1]]\nonumber\\
=&-\frac{1}{4}(\theta^2+2)[\tilde{L}_{4k+2},M_3^2]+\frac{1}{2} \theta \frac{1}{4}[M_3,[\tilde{L}_{4k},M_5]]'\nonumber\\
&-\frac{1}{4}\frac{1}{2}\theta (-1)[M_3,[M_5,\tilde{L}_{4k}]]'-(\frac{1}{4})^2[\tilde{L}_{4k-2},M_5^2].\nonumber
\end{align}

i.e. $[\tilde{L}_{4k+6},L_1^2]=A[\tilde{L}_{4k+2},M_3^2]+B[\tilde{L}_{4k-2},M_5^2]$.

Similarly we have $[\tilde{L}_{4k+2},M_3^2]=A[\tilde{L}_{4k-2},M_5^2]+B[\tilde{L}_{4k-6},M_7^2]\cdots$ and the last one is $[\tilde{L}_{6},M_{2k+1}^2]=-\frac{1}{4}(\theta^2+3)[\tilde{L}_{2},M_{2k+3}^2]$.

So we have $[\tilde{L}_{4k+6},L_1^2]=a_{k+2}[\tilde{L}_{2},M_{2k+3}^2]$, where the squence of number  $ a_n,n\ge1$ satisfies $a_1=1, a_2=-\frac{1}{4}(\theta^2+3), a_{n+2}=Aa_{n+1}+Ba_{n}$.

Similar to the proof of $[L_2, L_1^2]=0$, we have

$(q+2)[L_{4k+6},L_1^2]=\theta L_1L_{4k+7}'-\theta L_{4k+7}L_1-[L_2, L_{4k+6}]$,

$q[L_{4k+6},L_1^2]=\theta L_{4k+7}'L_1-\theta L_1L_{4k+7}-[L_2, L_{4k+6}']$

then we have $[\tilde{L}_{4k+6},L_1^2]=\theta L_1\tilde{L}_{4k+7}-\theta \tilde{L}_{4k+7}L_1-\frac{1}{2}[L_2,[\tilde{L}_{4k+5},L_{1}]]$.

We have known $\tilde{L}_{4k+7}=\frac{1}{2} \theta L_1\tilde{L}_{4k+6}+ \frac{1}{2} \theta \tilde{L}_{4k+6} L_1+\frac{1}{4}[\tilde{L}_{4k+4},M_3]$.

So we have
\begin{align}
2.\ [\tilde{L}_{4k+6},L_1^2]=&\theta \frac{1}{2}\theta [L_1^2,\tilde{L}_{4k+6}]+\theta \frac{1}{4}[L_1,[\tilde{L}_{4k+4},M_3]]'\nonumber\\
&-\frac{1}{2}([\tilde{L}_{4k+6},L_1^2]+[\tilde{L}_{4k+5},M_3])\nonumber\\
=&-\frac{1}{2}(\theta^2+1)[\tilde{L}_{4k+6},L_1^2]+\theta \frac{1}{4}[L_1,[\tilde{L}_{4k+4},M_3]]'\nonumber\\
&-\frac{1}{2}[\frac{1}{2}\theta L_1\tilde{L}_{4k+4}+\frac{1}{2}\theta \tilde{L}_{4k+4}L_1+\frac{1}{4}[\tilde{L}_{4k+2},M_3],M_3] \nonumber
\end{align}

i.e. $(3+\theta^2)[\tilde{L}_{4k+6},L_1^2]=-\frac{1}{4}[\tilde{L}_{4k+2},M_3^2]$

i.e. $(3+\theta^2)[\tilde{L}_{4k+6},L_1^2]=-\frac{1}{4}a_{k+1}[\tilde{L}_{2},M_{2k+3}^2]$.

In summary we have

$[\tilde{L}_{4k+6},L_1^2]=a_{k+2}[\tilde{L}_{2},M_{2k+3}^2],$

$(3+\theta^2)[\tilde{L}_{4k+6},L_1^2]=-\frac{1}{4}a_{k+1}[\tilde{L}_{2},M_{2k+3}^2]$

so if 
$
\left | \begin{matrix}

1& a_{k+2} \\
3+\theta^2 & -\frac{1}{4}a_{k+1}\\

\end{matrix} \right | \neq 0
$, then we have $[\tilde{L}_{4k+6},L_1^2]=0$ and then $[L_{4k+6},L_1^2]=0$. With Lemma ~\ref{lemmaseries} we can easily see this holds when $q$ is not a root of unity, hence $[\tilde{L}_{4k+6},L_1^2]=0$ and moreover $[\tilde{L}_{4k+6-4i},M_{2i+1}^2]=0$ for $i\ge 0$.

Then automatically $[\tilde{L}_{4k+7},L_1^2]=0=[L_{4k+7},L_1^2]$ and similar to the proof of $[L_2,\tilde{L}_{4k+4}]=0=[\tilde{L}_2,\tilde{L}_{4k+4}]$   before, we can get $[L_2,\tilde{L}_{4k+6}]=0=[\tilde{L}_2,\tilde{L}_{4k+6}]$.

\end{proof}
\begin{corol} \label{mrelations}
$[M_{2m+1},M_{2n+1}]=0 $ if $m+n$ is odd, $[M_{2m+1},M_{2n+1}]=(-1)^{\frac{m-n}{2}}2M_{m+n+1}^2 $ if $m+n$ is even and $M_{2m+1}^2$ commute with all $L_{n}$.

\end{corol}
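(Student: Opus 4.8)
The plan is to derive Corollary~\ref{mrelations} from Proposition~\ref{propo:central} essentially by unwinding the definitions of $M_{2n+1}$ and $\hat{L}_{n+1}$ together with the braided Jacobi identity (1.1), which is available because all the brackets in sight involve imaginary degrees. First I would recall $M_1=L_1$, $M_3=[L_2,L_1]$, and inductively $M_{2n+3}=[L_2,M_{2n+1}]$; and I would use part (d) of Lemma~\ref{lemma1}, which states $[M_{2n+1},M_{2n-1}]=[L_2,M_{2n-1}^2]$ and, once $[L_2,M_{2n-1}^2]=0$ is known, $[M_{2n+3},M_{2n-1}]=-2M_{2n+1}^2$. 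The key new input from Proposition~\ref{propo:central}(e) is that $[\tilde L_n,L_1^2]=0=[L_n,L_1^2]$ for all $n$, and more importantly the stronger statements proved along the way, namely $[\tilde L_{4k+4-4i},M_{2i+1}^2]=0$ and $[\tilde L_{4k+6-4i},M_{2i+1}^2]=0$ for all $i\ge 0$, i.e. $M_{2m+1}^2$ commutes with every $\tilde L_n$; combined with (e) and (f) this should upgrade to $M_{2m+1}^2$ commuting with every $L_n$ (since $L_n$ and $\tilde L_n$ differ by brackets of lower $L$'s with $L_1$, controlled by (b),(c) of Lemma~\ref{lemma1} and (e)).

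Next I would handle the commutation relations among the $M_{2n+1}$ themselves by a double induction on $m+n$. The base cases $[M_3,M_1]=0$ (relation (2.4a)) and $[M_5,M_1]=[L_2,M_3^2]$ are immediate from Lemma~\ref{lemma1}(d) and Proposition~\ref{propo:central}. For the inductive step, writing $M_{2m+1}=[L_2,M_{2m-1}]$ and expanding $[M_{2m+1},M_{2n+1}]=[[L_2,M_{2m-1}],M_{2n+1}]$ via (1.1), one gets $[L_2,[M_{2m-1},M_{2n+1}]]$ plus a correction term of the form $\pm[M_{2m-1},[L_2,M_{2n+1}]]=\pm[M_{2m-1},M_{2n+3}]$; this produces a recursion relating the bracket at index difference $2(m-n)$ to those at neighboring index differences, and tracking the sign $(-1)^{(m-n)/2}$ through it. The cases $m+n$ odd and $m+n$ even must be separated: when $m+n$ is odd the recursion closes onto $[M_3,M_1]=0$ and gives $0$; when $m+n$ is even it closes onto $[M_{2\ell+3},M_{2\ell-1}]=-2M_{2\ell+1}^2$ from Lemma~\ref{lemma1}(d) (applicable since $[L_2,M_{2\ell-1}^2]=0$ is exactly the commutation of $M_{2\ell-1}^2$ with $L_2$, which we just established), yielding the claimed $(-1)^{(m-n)/2}2M_{m+n+1}^2$ after matching the powers of $-2$ and $1/(-4)$ against the recursion in Proposition~\ref{propo:central}(e).

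I expect the main obstacle to be bookkeeping rather than conceptual: keeping the alternating signs $(-1)^{(m-n)/2}$ and the scalar factors consistent across the recursion, and making sure the "correction term" $[M_{2m-1},M_{2n+3}]$ in the Jacobi expansion is genuinely at a smaller value of $|m-n|$ (or is absorbed) so that the induction is well-founded; a careless step here produces the wrong power of $M^2$. A secondary subtlety is verifying that $M_{2m+1}^2$ commutes with $L_n$ and not merely with $\tilde L_n$ and with $L_1^2$ — this needs (2.2a)-type relations expressing the odd-index $L$'s and the difference $L_n-\tilde L_n$ in terms of brackets with $L_1$, all of which are controlled by Proposition~\ref{propo:central} and Lemma~\ref{lemma1}(b), so it should go through, but it requires an auxiliary induction of its own. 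Once both pieces are in place the corollary follows by assembling the three assertions; I would state it as a short paragraph of at most a few lines referencing Proposition~\ref{propo:central}(d),(e),(f) and Lemma~\ref{lemma1}(d).
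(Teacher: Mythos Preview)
Your overall strategy matches the paper's, and all the right ingredients are present: the Jacobi recursion
\[
[M_{2m+1},M_{2n+1}]=[L_2,[M_{2m-1},M_{2n+1}]]-[M_{2m-1},M_{2n+3}],
\]
induction on $m+n$, the identities $[\tilde L_{2k},M_{2i+1}^2]=0$ extracted from the proof of Proposition~\ref{propo:central}, and the passage from $\tilde L_n$ to $L_n$ via Lemma~\ref{lemma1}(b). The paper's proof is exactly this, phrased in one sentence.

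There is, however, a genuine ordering problem in your plan. You propose to establish ``$M_{2m+1}^2$ commutes with every $L_n$'' \emph{first} and then feed $[L_2,M_{2\ell-1}^2]=0$ into the $M$--formula induction. But the upgrade from $\tilde L_n$ to $L_n$ goes through $L_n=\tilde L_n\pm\tfrac12[L_{n-1},L_1]$, so by Jacobi it needs $[L_1,M_{2m+1}^2]=0$ as a base case; likewise the odd-index $\tilde L_{2k+1}$, expressed by Proposition~\ref{propo:central}(a),(d) in terms of $L_1$, $\tilde L_{2k}$, $\tilde L_{2k-2}$, $M_3$, forces you to know $[L_1,M_{2m+1}^2]=0$ and $[M_3,M_{2m+1}^2]=0$. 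Neither of these is available from Proposition~\ref{propo:central} alone: $[L_1,M_{2m+1}^2]=[[L_1,M_{2m+1}],M_{2m+1}]$, and $[L_1,M_{2m+1}]$ is precisely one of the $M$--brackets you have not yet computed. So your Step~A, in full generality, presupposes Step~B.

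The fix---and this is the paper's order---is to observe that the $M$--formula induction needs only the single instance $[L_2,M_{2\ell+1}^2]=0$, and \emph{that} one is obtainable directly from Proposition~\ref{propo:central}: write $L_2=\tilde L_2+L_1^2$, use $[\tilde L_2,M_{2\ell+1}^2]=0$ from the proof, and get $[L_1^2,M_{2\ell+1}]=0$ (hence $[L_1^2,M_{2\ell+1}^2]=0$) from part~(e) by a trivial Jacobi induction along $M_{2i+3}=[L_2,M_{2i+1}]$. With this in hand the induction on $m+n$ runs cleanly: for $m+n$ odd the recursion gives $f(m,n)=-f(m-1,n+1)$ (the $[L_2,f(m-1,n)]$ term vanishes because $f(m-1,n)=\pm 2M_{m+n}^2$ and $[L_2,M_{m+n}^2]=0$), and the symmetry $f(a,b)=f(b,a)$ forces the middle term, hence all terms, to be zero; for $m+n$ even the same recursion collapses to $f(m,n)=(-1)^{(m-n)/2}f(\ell,\ell)=(-1)^{(m-n)/2}2M_{m+n+1}^2$. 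Note in particular that the odd case does \emph{not} ``close onto $[M_3,M_1]=0$'' as you wrote---it closes onto $[L_2,M_{m+n}^2]=0$. Only after the $M$--formulas are in place do you deduce $[L_1,M_{2n+1}^2]=0$, and then the full commutation with all $L_n$ follows by the auxiliary induction you described.
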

\begin{proof}
By an easy induction on $m+n$ we can get the formula of $[M_{2m+1},M_{2n+1}]$. Then we can see $[L_1, M_{2n+1}^2]=0$ for all $n$. By the proof of Prop.~\ref{propo:central}, we know $M_{2n+1}^2$ commute with all $\tilde{L}_{2m}$. Then by Prop.~\ref{propo:central} $M_{2n+1}^2$ commute with all $\tilde{L}_{n}$ and this is equivalent to $M_{2n+1}^2$ commute with all $L_{n}$.
\end{proof}

\begin{corol} The comultiplication of $L_{n}$ and $L_{n}'$ in  $K_{\ge 1}/K_{>1}$ are as the following. \label{corol:comulformula}
\begin{align}
\Delta (L_{2n})=&L_{2n}\otimes 1 -\theta L_{2n-1}'\otimes L_1+\theta L_{2n-2}\otimes L_2 -\theta L_{2n-3}'\otimes L_3+... \nonumber\\
&-\theta L_1'\otimes L_{2n-1}+1\otimes L_{2n} \nonumber\\
\Delta (L_{2n}')=&L_{2n}'\otimes 1 -\theta L_{2n-1}\otimes L_1'+\theta L_{2n-2}'\otimes L_2' -\theta L_{2n-3}\otimes L_3'+...\nonumber\\
&-\theta L_1\otimes L_{2n-1}'+1\otimes L_{2n}'. \nonumber\\
\Delta (L_{2n+1})=&L_{2n+1}\otimes 1+\theta L_{2n}'\otimes L_1+\theta L_{2n-1}\otimes L_2+\theta L_{2n-2}'\otimes L_3+\cdots\nonumber\\
&+\theta L_1\otimes L_{2n}+1\otimes L_{2n+1}\nonumber\\
\Delta (L_{2n+1}')=&L_{2n+1}'\otimes 1+\theta L_{2n}\otimes L_1'+\theta L_{2n-1}'\otimes L_2'+\theta L_{2n-2}\otimes L_3'+\cdots\nonumber\\
&+\theta L_1'\otimes L_{2n}'+1\otimes L_{2n+1}'\nonumber
\end{align}
\end{corol}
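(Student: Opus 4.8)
We sketch the proof; it is an induction on $n$ carried out for all four identities at once, the computation taking place inside the braided Hopf algebra $K_{\ge 1}/K_{>1}$. Two structural facts frame the answer. First, by Lemma~\ref{subquotientlemma} the classes of the $X_k$ vanish in $K_{\ge 1}/K_{>1}$; since that algebra is $\mathbb{N}_0$-graded with its degree-$m$ part concentrated in $B(\mathbb{V})$-degree $m\delta$ and $\chi(m\delta,n\delta)=(-1)^{mn}$, its comultiplication $\bar\Delta$ is an algebra morphism into the braided tensor square, which on these degrees is the super tensor square $(a\otimes b)(c\otimes d)=(-1)^{mn}\,ac\otimes bd$ (for $b$ in degree $m\delta$, $c$ in degree $n\delta$), and every contribution carrying a factor $X_k$ — in particular everything supported in an unbalanced degree, e.g. anything with a $Y_k$ on the left leg — is discarded. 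Second, by Prop.~\ref{comultiplicationlyndon}, $\bar\Delta(L_n)-1\otimes L_n-L_n\otimes 1$ has its right tensor legs in the span of products of super-letters larger than $L_n$; after balancing this forces those right legs to be products of $L_1,\dots,L_{n-1}$ (equivalently of $L_1,\dots,L_{n-1}$ and the $M_{2j+1}$, which are iterated brackets of $L_1,L_2$). So the assertion is precisely that, once the relations of $B(\mathbb{V})$ are used to simplify, only single super-letters $L_j$ survive on the right leg, with the stated coefficients and the indicated $L\leftrightarrow L'$ and sign behaviour.

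The base of the induction is direct: from $L_1=[x_1,x_2]$ and the primitivity of $x_1,x_2$ one gets $\Delta(L_1)=L_1\otimes 1+1\otimes L_1+(1-q^{-2})\,x_1\otimes x_2$, whose last term dies in $K_{\ge 1}/K_{>1}$, yielding the asserted formula at $n=0$; then $\bar\Delta(L_2)$, $\bar\Delta(L_2')=\bar\Delta(L_2)-2\bar\Delta(L_1)^2$, and hence $\bar\Delta(M_3)$, $\bar\Delta(M_5)$ follow by multiplicativity from $L_2=[x_1,[L_1,x_2]]$ and the bracket definitions of $M_3,M_5$. For the inductive step I raise the index by one. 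By Lemma~\ref{lemma1}(b) the difference $L_m'-L_m$ equals $\pm[L_{m-1},L_1]$, whose coproduct comes from $\bar\Delta(L_{m-1})$ and $\bar\Delta(L_1)$ by multiplicativity, so it suffices to find $\bar\Delta(\tilde L_m)$. When $m\equiv 1$ or $3\bmod 4$, Prop.~\ref{propo:central}(a) and (d) express $\tilde L_m$ itself through $L_1$, through $M_3$ and $M_5$ only, and through $\tilde L$'s of strictly smaller index, so applying $\bar\Delta$ and the induction hypothesis gives $\bar\Delta(\tilde L_m)$ at once; when $m\equiv 0$ or $2\bmod 4$, Prop.~\ref{propo:central}(b) and (c) instead determine $[\tilde L_m,L_1]$ through lower data, and one recovers $\bar\Delta(\tilde L_m)$ by solving $[\bar\Delta(\tilde L_m),\bar\Delta(L_1)]=(\text{known})$: the solution is unique modulo the kernel of the map $\xi\mapsto[\xi,\bar\Delta(L_1)]$ on the tensor square, and that residual ambiguity is removed by the shape constraint of Prop.~\ref{comultiplicationlyndon} together with coassociativity.

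The substance of the argument is the collapse of the intermediate expressions. Multiplying two of the target (product-free) formulas in the super tensor square produces genuine products on both legs, weighted by signs $(-1)^{mn}$ and by powers of $\theta$; these must cancel down to single-leg form, and the cancellation is exactly where the relations of $B(\mathbb{V})$ enter — the quantum Serre relations, the identity $[L_2,L_1^2]=0$, the centrality of the $M_{2j+1}^2$ from Cor.~\ref{mrelations}, and the relations of Prop.~\ref{propo1} that trade products such as $L_1L_2$ for $L_3$ and $M_3$ (reflecting that $L_3$ and the higher odd $L_{2j+1}$ are no longer root vectors). Running alongside is the sign bookkeeping: matching the alternating $(-1)^j$ in the formula for $\Delta(L_{2n})$ against the uniform $+\theta$ in the formula for $\Delta(L_{2n+1})$, tracking the $(-1)^{mn}$ from the braiding, and checking that the left leg is primed exactly when its index has parity opposite to that of the element being comultiplied. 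This is the main obstacle, and it is tamed by organizing the recursion through generating functions in a formal parameter (as is done in the next subsection), which turns Prop.~\ref{propo:central}(a)--(d) into one closed relation and makes the sign pattern transparent, with coassociativity $(\bar\Delta\otimes\mathrm{id})\bar\Delta=(\mathrm{id}\otimes\bar\Delta)\bar\Delta$ available as a check at each stage.
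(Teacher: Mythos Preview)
Your route diverges from the paper's and carries a real gap. The paper's argument is essentially one line: once $[L_n,L_1^2]=0$ is known (Prop.~\ref{propo:central}(e)), a direct induction on $n$ gives $\bar\Delta(L_{2n})$ and $\bar\Delta(L_{2n+1})$ — most naturally by first computing $\Delta(Y_n)$ modulo $B_{>1}\otimes B(\mathbb{V})$ via $Y_{n+1}=[L_1,Y_{n-1}]$ (or in steps of two via $Y_{n+2}=[L_1^2,Y_n]$, where the centrality of $L_1^2$ makes the bookkeeping trivial) and then bracketing with $\Delta(X_1)$. The primed formulas follow from Lemma~\ref{lemma1}(b). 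No inversion of any bracket map is needed, and no quadratic cross-terms have to be cancelled by hand.

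Your scheme instead routes through $\tilde L_m$ and Prop.~\ref{propo:central}(a)--(d). For odd $m$ those identities do express $\tilde L_m$ in lower data and the step is fine in principle. For even $m$, however, (b) and (c) give you only $[\tilde L_m,L_1]$, and you propose to recover $\bar\Delta(\tilde L_m)$ by solving
\[
[\xi,\,L_1\otimes 1+1\otimes L_1]=(\text{known}).
\]
The kernel of this operator is large: in degree $2k\delta$ it contains every $a\otimes b$ with $a,b$ in the centraliser of $L_1$, and that centraliser already contains $L_1^2$ and every $M_{2j+1}^2$ (Cor.~\ref{mrelations}), hence an entire polynomial algebra on each leg. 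The Lyndon constraint of Prop.~\ref{comultiplicationlyndon} restricts only the right tensorand of $\bar\Delta(L_m)$ (not of $\bar\Delta(\tilde L_m)$), and even that constraint (right leg in $M_{>L_m}$) does not exclude terms such as $L_1^{2a}\otimes L_1^{2b}$ or $M_{2i+1}^2\otimes M_{2j+1}^2$. Coassociativity alone does not obviously pin this down either. Since your odd-$m$ step needs the even-$m$ answer (e.g.\ $\tilde L_{4k+1}$ requires $\bar\Delta(\tilde L_{4k})$), the recursion stalls at the first even index.

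A second, softer issue: your third paragraph acknowledges but does not perform the collapse of products to single super-letters, deferring to ``generating functions'' and ``coassociativity as a check''. That is a plan, not a proof. In the paper's approach this difficulty simply does not arise, because the centrality of $L_1^2$ makes the step-by-two recursion close without any quadratic debris. I would recommend abandoning the $\tilde L_m$ route here and carrying out the straight induction the paper indicates.
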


\begin{proof}
By $L_1^2$ commutes with $L_n$ and an easy induction, we can get the expressions of $\Delta (L_{2n})$ and $\Delta (L_{2n+1})$. Then we can get the expressions of $\Delta (L_{2n}')$ and $\Delta (L_{2n+1}')$ by Lemma ~\ref{lemma1}.
\end{proof}

\begin{remar} The comultiplication formula in the subquotient $K_{\ge 1}/K_{>1}$ of $L_n$ in $U_{v}(\hat{sl_2})^{+}$ is $\Delta(L_n)=\sum_{i=0}^{n}\theta L_{n-i}\otimes L_{i}$, corresponding to a result of  Prop. 4.3 in \cite{quantumaffinealgebras}.

\end{remar}

\begin{corol}\label{corol:Lnonzero}
$L_{n}$ and $L_{n}'$ are not 0  and  linearly independent in $B(\mathbb{V})$, for $n\ge 0$.
\end{corol}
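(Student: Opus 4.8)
The plan is to prove the statement by induction on $n$, using the comultiplication formulas of Corollary \ref{corol:comulformula} together with the differential-operator criterion of Proposition 1.6 (an element is nonzero iff some $\partial_i^R$ or $\partial_i^L$ of it is nonzero). The base cases $n=0,1$ are immediate: $L_0=L_0'=\frac1\theta\neq0$, and $L_1=L_1'=[X_1,Y_1]$ is a quantum Serre element that is well known to be nonzero in $B(\mathbb{V})$ (it is the biggest super-letter in degree $\delta$, hence a root vector by Lemma \ref{lemma:realrootvectors} applied in degree $\delta$, or one checks $\partial_1^R(L_1)$ directly). So suppose $L_m, L_m'$ are nonzero and that $\{L_m\}$, $\{L_m'\}$ are each linearly independent families for all $m<n$; we must handle $L_n$ and $L_n'$ in degree $n\delta$.

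For the nonvanishing of $L_n$, I would feed the formula for $\Delta(L_n)$ from Corollary \ref{corol:comulformula} into the coalgebra argument. If $L_n=0$ in $B(\mathbb{V})$, then in the subquotient $K_{\ge1}/K_{>1}$ its reduced coproduct $\sum_{i=1}^{n-1}(\pm\theta)\,L_{n-i}^{(\prime)}\otimes L_i$ must also vanish; but by the induction hypothesis each $L_i$ ($1\le i\le n-1$) is a nonzero element, and the terms $L_{n-i}^{(\prime)}\otimes L_i$ lie in distinct bidegrees $((n-i)\delta, i\delta)$, so no cancellation between them is possible. Hence $\Delta(L_n)-L_n\otimes1-1\otimes L_n\neq0$, forcing $L_n\neq0$. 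The same argument applied to $\Delta(L_n')$ gives $L_n'\neq0$; alternatively, part (b) of Lemma \ref{lemma1} expresses $L_n'$ in terms of $L_n$ and $[L_{n-1},L_1]$, and the primitive-part bookkeeping in the subquotient shows this combination is nonzero. (One must be slightly careful that the $L_i$ appearing on the right are interpreted in the subquotient, where by Lemma \ref{subquotientlemma} they are genuinely nonzero primitive-modulo-lower elements; the corollary is really a statement that can be checked there and then lifted.)

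For linear independence of $\{L_n\}_{n\ge0}$: since each $L_n$ is $\mathbb{Z}^2$-homogeneous of degree $n\delta$ and these degrees are pairwise distinct, a linear relation among the $L_n$ is automatically a single term, so linear independence reduces to nonvanishing, which we have just shown; the same remark applies to $\{L_n'\}_{n\ge0}$. The phrase "$L_n$ and $L_n'$ are linearly independent" should be read as: for each fixed $n$, the pair $L_n, L_n'$ spans a $2$-dimensional subspace of $B^{n\delta}(\mathbb{V})$ (equivalently $\tilde L_n$ and $L_n-L_n'$ are both nonzero). For this I would again compare reduced coproducts in $K_{\ge1}/K_{>1}$: from Corollary \ref{corol:comulformula}, $\Delta(L_n)$ and $\Delta(L_n')$ have visibly different leading behaviour (e.g. the coefficient of $L_{n-1}^{(\prime)}\otimes L_1^{(\prime)}$ distinguishes them because $L_1=L_1'$ but the companion factor alternates between $L_{n-1}$ and $L_{n-1}'$, which are independent by induction), so $L_n-\lambda L_n'$ cannot be $0$ for any scalar $\lambda$. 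The main obstacle I anticipate is the bookkeeping in this last step: one has to track the alternating-sign coproduct formula carefully and use the induction hypothesis at level $n-1$ in exactly the right place, and also make sure the reduction to the subquotient is legitimate (i.e.\ that an element of $B^{n\delta}(\mathbb{V})$ killed in $K_{\ge1}/K_{>1}$ together with all lower information is actually zero). Once the coproduct comparison is set up cleanly, the argument is a routine induction.
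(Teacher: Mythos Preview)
Your approach---induction via the comultiplication formulas of Corollary~\ref{corol:comulformula}---is exactly what the paper does (its proof is the one line ``Follow Corollary~\ref{corol:comulformula} and an easy induction''), and your argument for the nonvanishing of $L_n$ and $L_n'$ is correct: if $L_n=0$ then its reduced coproduct in $K_{\ge1}/K_{>1}$ would have to vanish, but the term in bidegree $((n-1)\delta,\delta)$ is $\pm\theta L_{n-1}^{(\prime)}\otimes L_1\neq0$ by induction. The observation that linear independence of the whole family $\{L_n\}$ reduces to nonvanishing by degree reasons is also fine.

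There is, however, a real gap in your argument for the pairwise independence of $L_n$ and $L_n'$: the induction does not start. For $n=2$, the reduced coproducts of $L_2$ and $L_2'$ in the subquotient are \emph{identical} (both equal $-\theta L_1\otimes L_1$, since $L_1=L_1'$), so the coproduct comparison you propose cannot distinguish them. Equivalently, $L_2-L_2'=[L_1,L_1]=2L_1^2$ is primitive in $K_{\ge1}/K_{>1}$, and you need a separate argument that $L_1^2\neq0$ in $B(\mathbb{V})$ (for instance via $\partial_2^R(L_1^2)=-q^{-1}\theta X_2$ together with $X_2\neq0$, or by noting that $L_1^2$ is a root vector candidate at $2\delta$ and checking it directly). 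Once $n=2$ is handled, your inductive step---comparing the $(\,\cdot\,)\otimes L_1$ term and using that $L_{n-1},L_{n-1}'$ are independent---does go through for $n\ge3$. The ``bookkeeping'' you flag as the main obstacle is actually routine; the issue is this missing base case.
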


\begin{proof}
Follow Corollary ~\ref{corol:comulformula} and an easy induction.
\end{proof}

\begin{lemma}We have the following results using differential operators.\label{lemma:diff} 

(a) $\partial_2^R(L_1^2)=-q^{-1}\theta X_2$, $\partial_2^R(Y_{2n+1})=\theta L_{2n}'$, $\partial_2^R(Y_{2n})=-\theta L_{2n-1}'$, $\partial_2^R(L_{2n})=-\theta (X_1 L_{2n-1}'-q^{-2}L_{2n-1}'X_1)$, $\partial_2^R(L_{2n+1})=\theta (X_1 L_{2n}'-q^{-2}L_{2n}'X_1)$.

(b) $\partial_1^L(L_1^2)=q^{-1}\theta Y_2$, $\partial_1^L(X_{2n+1})=\theta L_{2n}$, $\partial_1^L(X_{2n})=\theta L_{2n-1}'$, $\partial_1^L(L_{2n}')=\theta (L_{2n-1}'Y_1+q^{-2}Y_1 L_{2n-1}')$, $\partial_1^L(L_{2n+1})=\theta (L_{2n}Y_1-q^{-2}Y_1L_{2n})$.

\end{lemma}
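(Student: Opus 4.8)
The plan is to prove everything by induction on $n$, using that $X_n,Y_n,L_n,L_n'$ are all iterated braided brackets in $x_1,x_2$, that $\partial_2^R$ and $\partial_1^L$ are the twisted derivations of the defining rules, and that $\partial_2^R(x_1)=\partial_1^L(x_2)=0$. Every scalar that appears is a sign times a power of $q$, obtained from $\chi(\alpha_1,\alpha_1)=q$, $\chi(\alpha_1,\alpha_2)=\chi(\alpha_2,\alpha_1)=q^{-1}$, $\chi(\alpha_2,\alpha_2)=-q$ and their consequences $\chi(\delta,\alpha_1)=1$, $\chi(\delta,\alpha_2)=\chi(\delta,\delta)=-1$. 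The seeds are immediate: from $L_1=[x_1,x_2]$ one gets $\partial_2^R(L_1)=q^{-1}\theta X_1$ and $\partial_1^L(L_1)=q^{-1}\theta Y_1$, and one more application of a Leibniz rule together with $X_2=[X_1,L_1]$, $Y_2=[L_1,Y_1]$ gives $\partial_2^R(L_1^2)=-q^{-1}\theta X_2$ and $\partial_1^L(L_1^2)=q^{-1}\theta Y_2$; the base cases $\partial_2^R(Y_1)=\partial_1^L(X_1)=1$, $\partial_2^R(Y_2)=-\theta L_1$, $\partial_1^L(X_2)=\theta L_1$ are equally direct (recall $L_0=L_0'=\tfrac1\theta$ and $L_1=L_1'$).

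For part (a) the heart is the formula for $\partial_2^R(Y_n)$, which I would prove by induction through the recursion $Y_{n+2}=[L_1^2,Y_n]$ of Lemma \ref{lemma1}(a). Expanding with the Leibniz rule leaves a term $L_1^2\,\partial_2^R(Y_n)-\partial_2^R(Y_n)\,L_1^2$, which vanishes because $\partial_2^R(Y_n)\in\mathbb{C}\,L_{n-1}'$ by induction and $[L_1^2,L_{n-1}']=2[L_1^2,\tilde L_{n-1}]-[L_1^2,L_{n-1}]=0$ by Proposition \ref{propo:central}(e); the surviving cross terms come from $\partial_2^R(L_1^2)=-q^{-1}\theta X_2$ and reassemble, via the identity $[X_2,Y_n]=L_{n+1}'$ of Lemma \ref{lemma1}(c) (whose hypothesis holds for all $n$ by Proposition \ref{propo:central}(e)), into $\pm\theta L_{n+1}'$ with the sign of the statement. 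The formulas for $\partial_2^R(L_{2n})$ and $\partial_2^R(L_{2n+1})$ then drop out by applying $\partial_2^R$ to $L_n=[X_1,Y_n]$, using $\partial_2^R(X_1)=0$ and the formula just obtained for $\partial_2^R(Y_n)$.

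Part (b) runs in the mirror direction. The core formulas $\partial_1^L(X_{2n+1})=\theta L_{2n}$ and $\partial_1^L(X_{2n})=\theta L_{2n-1}'$ are proved by induction through $X_{n+2}=[X_n,L_1^2]$ of Lemma \ref{lemma1}(a): the analogous spurious term $\partial_1^L(X_n)\,L_1^2-L_1^2\,\partial_1^L(X_n)$ vanishes since $\partial_1^L(X_n)\in\mathbb{C}\,L_{n-1}+\mathbb{C}\,L_{n-1}'$ and $[L_1^2,L_{n-1}]=[L_1^2,L_{n-1}']=0$, and the cross terms (now from $\partial_1^L(L_1^2)=q^{-1}\theta Y_2$) reassemble, via $[X_n,Y_2]=L_{n+1}$ for $n$ odd and $[X_n,Y_2]=L_{n+1}'$ for $n$ even (again Lemma \ref{lemma1}(c)), into $\theta L_{n+1}$, respectively $\theta L_{n+1}'$. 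Finally $\partial_1^L(L_{2n}')$ and $\partial_1^L(L_{2n+1})$ follow by applying $\partial_1^L$ to the identities $L_{2n}'=[X_{2n},Y_1]$ and $L_{2n+1}=[X_{2n+1},Y_1]$ of Lemma \ref{lemma1}(c), using $\partial_1^L(Y_1)=0$ and the $\partial_1^L(X_n)$-formulas.

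I do not expect a conceptual obstacle here: the content is bookkeeping. The fiddly part is keeping the even/odd parities consistent — the alternation between $L_n$ and $L_n'$, and the signs $(-1)^n$ hidden inside $\chi(\alpha_2,\cdot)$ — and checking at each step that the hypothesis of Lemma \ref{lemma1}(c) is in force, which it is since Proposition \ref{propo:central}(e) gives $[L_m,L_1^2]=0$ for every $m$. One could instead deduce (b) from (a) using the algebra anti-automorphism of $B(\mathbb{V})$ fixing $x_1,x_2$ (available because the braiding matrix is symmetric), which conjugates $\partial_i^R$ into $\partial_i^L$; but computing its effect on the $L_n$, $L_n'$ is no cheaper than the direct induction, so I would keep the two parallel arguments.
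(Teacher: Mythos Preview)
Your proposal is correct and follows exactly the route the paper intends: its proof reads in full ``By Lemma~\ref{lemma1}, Prop.~\ref{propo:central} and induction,'' and your write-up is precisely the unpacking of that sentence --- induct via $Y_{n+2}=[L_1^2,Y_n]$ and $X_{n+2}=[X_n,L_1^2]$, kill the spurious commutator with $L_1^2$ using Proposition~\ref{propo:central}(e), and reassemble the cross terms with Lemma~\ref{lemma1}(c). No circularity arises, since Lemma~\ref{lemma:diff} sits after Proposition~\ref{propo:central} in the paper.
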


\begin{proof}
By Lemma ~\ref{lemma1}, Prop.~\ref{propo:central} and induction.
\end{proof}

\begin{corol}\label{corol:realrootvectors}
In terms of Lyndon word, $X_n$ and $Y_n$ are the root vectors in corresponding degrees.
\end{corol}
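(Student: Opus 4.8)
The plan is to use the differential-operator criterion of Proposition~1.9 together with the formulas in Lemma~\ref{lemma:diff}. By Lemma~\ref{lemma:realrootvectors}, it suffices to prove that $X_n$ and $Y_n$ are nonzero in $B(\mathbb{V})$; once they are nonzero they are automatically root vectors, since they are the largest super-letters in their respective degrees. So the entire content of the corollary is the non-vanishing of $X_n$ and $Y_n$ for all $n\ge 1$, and I would prove this by induction on $n$.

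First I would record the base case: $X_1=x_1$ and $Y_1=x_2$ are nonzero by definition, and $X_2=[x_1,x_2]$, $Y_2=[x_1,x_2]$ (up to scalar) are nonzero because $L_1=[X_1,Y_1]\neq 0$ (this is part of Corollary~\ref{corol:Lnonzero} with $n=1$, or simply because $L_0=\tfrac1\theta$ and the $L_n$ are linearly independent). For the inductive step I would use part~(a) of Lemma~\ref{lemma1}, namely $X_{n+2}=[X_n,L_1^2]$ and $Y_{n+2}=[L_1^2,Y_n]$, so it is enough to show these brackets are nonzero. For this I apply $\partial_1^L$ to $X_{2n+1}$ and $X_{2n}$ and $\partial_2^R$ to $Y_{2n+1}$ and $Y_{2n}$: by Lemma~\ref{lemma:diff}(b), $\partial_1^L(X_{2n+1})=\theta L_{2n}$ and $\partial_1^L(X_{2n})=\theta L_{2n-1}'$, and by Lemma~\ref{lemma:diff}(a), $\partial_2^R(Y_{2n+1})=\theta L_{2n}'$ and $\partial_2^R(Y_{2n})=-\theta L_{2n-1}'$. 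Since $\theta=q-q^{-1}\neq 0$ and, by Corollary~\ref{corol:Lnonzero}, $L_m$ and $L_m'$ are nonzero for every $m\ge 0$, each of these differentials is nonzero, hence $X_n\neq 0$ and $Y_n\neq 0$ by Proposition~1.9.

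Once the non-vanishing is in hand, Lemma~\ref{lemma:realrootvectors} finishes the argument: $X_n$ and $Y_n$, being nonzero and being the maximal super-letters at their degrees $(n)\alpha_1+(n-1)\alpha_2$ and $(n-1)\alpha_1+(n)\alpha_2$ respectively, cannot be linear combinations of products of strictly larger root vector candidates (there are none of the required degree that are larger), so they are root vectors in the sense of the definition preceding Theorem~1.14.

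I do not expect a genuine obstacle here, since all the ingredients — the differential-operator formulas and the linear independence of the $L_n$, $L_n'$ — are already established in the preceding lemmas and corollaries. The only point requiring a little care is bookkeeping the parity: the formula for $\partial_1^L$ (resp.\ $\partial_2^R$) of a real root vector produces an $L$ with index one less, and one must check that this lands in the range already known to be nonzero by Corollary~\ref{corol:Lnonzero}, which covers all $n\ge 0$; so there is no circularity and the induction closes cleanly.
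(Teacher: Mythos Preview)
Your proposal is correct and follows exactly the paper's own route: invoke Lemma~\ref{lemma:realrootvectors} to reduce to non-vanishing, then use the differential-operator formulas of Lemma~\ref{lemma:diff} together with Corollary~\ref{corol:Lnonzero} (and the criterion that a nonconstant element with all $\partial_i$ vanishing is zero) to conclude $X_n,Y_n\neq 0$. Two cosmetic slips: the differential criterion is Proposition~1.3, not 1.9, and $Y_2$ is $[[x_1,x_2],x_2]$ rather than $[x_1,x_2]$; neither affects the argument, since the differential formulas in Lemma~\ref{lemma:diff} already cover all $n\ge 1$ directly without any separate base case or induction.
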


\begin{proof}
By  Lemma ~\ref{lemma:realrootvectors}, Lemma ~\ref{lemma:diff}, Corollary ~\ref{corol:Lnonzero}.
\end{proof}

\begin{corol}
As an algebra $K_{\ge1}/K_{>1}$ is isomorphic to the subalgebra of $B(\mathbb{V})$ generated by $L_n$ for $n> 0$. Then $M_{2m+1}^2$ are central elements in  $K_{\ge1}/K_{>1}$.

\end{corol}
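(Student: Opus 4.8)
The plan is to prove the two assertions separately, using the machinery already assembled. For the first claim, that $K_{\ge1}/K_{>1}$ is isomorphic as an algebra to the subalgebra $A$ of $B(\mathbb{V})$ generated by the $L_n$, $n>0$, I would argue as follows. By Lemma~\ref{subquotientlemma} each $L_n$ lies in $K_{\ge1}$ and $L_n\notin K_{>1}$, so the classes $\overline{L_n}$ in $K_{\ge1}/K_{>1}$ are defined and the subalgebra generated by them is the image of $A$ under the projection $\pi:K_{\ge1}\to K_{\ge1}/K_{>1}$. The heart of the matter is therefore to show (i) $\pi|_A$ is injective, and (ii) $\pi|_A$ is surjective onto $K_{\ge1}/K_{>1}$. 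For surjectivity, I would invoke the PBW basis of Theorem~1.14: a PBW monomial in the super-letters that lies in $K_{\ge1}$ (i.e.\ whose degree $a_1\alpha_1+a_2\alpha_2$ has $a_1\ge a_2$) but is nonzero modulo $K_{>1}$ must, by the degree bookkeeping and the ordering of super-letters recorded after Lemma~\ref{subquotientlemma}, be (up to lower terms in $K_{>1}$) a product of the $L_n$'s alone — the factors $X_n$ land in $K_{>1}$ by Lemma~\ref{subquotientlemma}, and any $Y_n$ factor would force $a_2>a_1$ in its contribution unless cancelled by a matching $X$. For injectivity, I would use that the relevant PBW monomials in the $L_n$ remain linearly independent modulo $K_{>1}$; this is where Corollary~\ref{corol:Lnonzero} and the comultiplication formulas of Corollary~\ref{corol:comulformula} do the work, since $\Delta$ computed in the subquotient already distinguishes these monomials.

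For the second claim, that each $M_{2m+1}^2$ is central in $K_{\ge1}/K_{>1}$, the key input is Corollary~\ref{mrelations}, which asserts that $M_{2m+1}^2$ commutes with every $L_n$ inside $B(\mathbb{V})$. Since by the first part $K_{\ge1}/K_{>1}\cong A$ is generated as an algebra by the images of the $L_n$, an element that commutes with every $\overline{L_n}$ commutes with the whole algebra; and $M_{2m+1}^2$ is itself a polynomial in the $L_n$ (it is an iterated bracket of $L_1$ and $L_2$), so its image lies in $A$. Hence $\overline{M_{2m+1}^2}$ is central. One should also note that $M_{2m+1}^2\in K_{\ge1}$: since $M_{2m+1}=[L_2,[L_2,\cdots[L_2,L_1]\cdots]]$ is a bracket of elements of $K_{\ge1}$ and $K_{\ge1}$ is a subalgebra (Proposition~1.5), this is immediate.

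I expect the main obstacle to be the injectivity half of the algebra isomorphism — precisely, showing that no nontrivial relation among the $L_n$ holds in $K_{\ge1}/K_{>1}$ that does not already hold in $A\subseteq B(\mathbb{V})$, equivalently that $A\cap K_{>1}=0$. The degree and super-letter ordering arguments handle the ``shape'' of surviving monomials, but ruling out subtle cancellations requires the full strength of the PBW theorem together with the explicit subquotient comultiplication of Corollary~\ref{corol:comulformula}; one has to check that the coproduct, evaluated on a putative linear dependence among PBW monomials in the $L_n$, forces all coefficients to vanish. Everything else is bookkeeping: the centrality statement is then a one-line consequence of Corollary~\ref{mrelations} and the generation statement, and the membership $M_{2m+1}^2\in K_{\ge1}$ follows from $K_{\ge1}$ being a subalgebra.
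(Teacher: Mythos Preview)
Your overall strategy is correct and aligns with the paper's, but you have misidentified where the work lies. Injectivity of $\pi|_A$, i.e.\ $A\cap K_{>1}=0$, is \emph{trivial}: every generator $L_n$ has degree $n\delta$, so $A$ is contained in the diagonal part $B_{=1}:=\bigoplus_{m}B^{m\delta}(\mathbb{V})$, while $K_{>1}\subseteq B_{>1}$ by definition; hence $A\cap K_{>1}\subseteq B_{=1}\cap B_{>1}=0$ by the $\mathbb{N}_0^2$-grading alone. There is no need for PBW or the coproduct formulas here, and your paragraph anticipating ``subtle cancellations'' is aimed at a non-issue.

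The paper packages the argument as a direct-sum decomposition: using Corollary~\ref{corol:realrootvectors} and the Lyndon-word PBW theorem, the imaginary root vectors all lie in the subalgebra generated by the $L_n$, and from the shape of $\Delta(Y_n)$ in Lemma~\ref{subquotientlemma} one sees that $K_{\ge1}$ is exactly the subalgebra generated by the $X_n$ and the $L_n$; then $K_{\ge1}=K_{>1}\oplus K_{=1}$ as graded vector spaces, with $K_{=1}$ the subalgebra generated by the $L_n$. This simultaneously gives surjectivity and injectivity. Your surjectivity sketch (``any $Y_n$ factor would force $a_2>a_1$ unless cancelled by a matching $X$'') is the right intuition, but the clean way to execute it is precisely to identify $K_{\ge1}$ with the span of those PBW monomials built from $X_n$'s and imaginary super-letters, which is what the $\Delta(Y_n)$ formula buys you. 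Your treatment of the centrality of $M_{2m+1}^2$ via Corollary~\ref{mrelations} and the generation statement is exactly what the paper does.
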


\begin{proof}
By Corollary~\ref{corol:realrootvectors} we have $X_n$ and $Y_n$ are real root vectors, then the only possible imaginary root vectors are $L_n$ for $n\ge 1$ and their iterated commutators by Lyndon word theory. By the expression of $\Delta(Y_n)$ in Lemma ~\ref{subquotientlemma}, we get $K_{\ge 1}$ is the subalgebra generated by $X_n$ and $L_n$ and $K_{\ge 1}=K_{>1}\oplus K_{=1}$ where $K_{=1}$ is the subalgebra generated by $L_{n}$ and $K_{>1}$ is the subalgebra $K_{\ge 1}\cap B_{>1}$. This means as an algebra $K_{\ge1}/K_{>1}$ is isomorphic to the subalgebra in $B(\mathbb{V})$ generated by $L_n$.

\end{proof}

\begin{remar}
In summary, $K_{\ge1}/K_{>1}$ is a braided Hopf algebra whose algebra structure is nothing new but having a manageable coalgebra structure.
\end{remar}

\subsection{Relations in the form of series}In this subsection we will describe the relations in $B(\mathbb{V})$ in terms of series, which will be powerful  in proofs of main theorems in this paper.

Let $\theta':=q+q^{-1}$, $\alpha=\sqrt{\frac{iq}{2}}$, $\beta=\sqrt{\frac{iq^{-1}}{2}}$.

Let $a_1=1$, $a_2=\frac{1}{2}\theta$, $a_{n+2}=\frac{1}{2}\theta a_{n+1}+\frac{1}{4}a_n$, then by Lemma ~\ref{lemmaseries} we have $a_n=\frac{(\frac{1}{2}q)^n-(-\frac{1}{2}q^{-1})^n}{\frac{1}{2}(q+q^{-1})}$.

Let 
\begin{align}
\tilde{L}(u)&:=\sum_{n=0}^{\infty}\theta \tilde{L}_{2n}u^{2n}, X(u):=\sum_{n=0}^{\infty}2\theta M_{2n+1}^2 u^{4n+2},\nonumber\\
A(u)&:=\sum_{n=0}^{\infty}\theta a_{n+1}M_{2n+1}u^{2n+1},B(u):=\sum_{n=1}^{\infty}\theta a_{n}M_{2n+1}u^{2n+1}, \nonumber\\
A'(u)&:=\frac{1}{i}A(iu), B'(u):=iB(iu).\nonumber
\end{align}
Sometimes for convenience we just write $A$, $B$, $A'$, $B'$, $X$.

\begin{propo} \label{seriesrelations}
We have the following relations in the form of series.

(a) $A(u)\tilde{L}(u)=\tilde{L}(u)A'(u)=\sum_{n=0}^{\infty}\theta \tilde{L}_{2n+1}u^{2n+1}.$

(b) $B(u)\tilde{L}(u)=\tilde{L}(u) B'(u)=\sum_{n=1}^{\infty}\theta \hat{L}_{2n+1}u^{2n+1}.$

(c) $[A,B]=0=[A',B'].$

(d) $A^2=\frac{q}{\theta'i}X(\alpha u)-\frac{q^{-1}}{\theta'i}X(\beta u)$, $\frac{1}{4}B^2=\frac{q}{\theta'i}X(\beta u)-\frac{q^{-1}}{\theta'i}X(\alpha u)$.

(e) $\frac{1}{2}[A(u), L_1 u]=\frac{1}{\theta}(\frac{1}{4}B^2+A^2).$
\end{propo}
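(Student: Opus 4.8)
The plan is to prove the five series identities (a)--(e) by translating the coefficient-wise relations of Proposition~\ref{propo:central} and Corollary~\ref{mrelations} into generating-function form, and then performing the requisite manipulations of formal power series. Throughout, the key inputs are: the recursion $a_{n+2}=\frac12\theta a_{n+1}+\frac14 a_n$ with closed form $a_n=\frac{(\frac12 q)^n-(-\frac12 q^{-1})^n}{\frac12\theta'}$; the commutation relations $[M_{2m+1},M_{2n+1}]=0$ for $m+n$ odd and $=(-1)^{(m-n)/2}2M_{m+n+1}^2$ for $m+n$ even (Corollary~\ref{mrelations}); the fact that each $M_{2m+1}^2$ is central in the subalgebra generated by the $L_n$ (so it commutes with every $\tilde L_{2n}$); and the four ``$\tilde L$ recursions'' (a)--(d) of Proposition~\ref{propo:central} rewritten uniformly as $\tilde L_{2n+1}=\frac12\theta(L_1\tilde L_{2n}+\tilde L_{2n}L_1)+\frac14[\tilde L_{2n-2},M_3]$ and $[\tilde L_{2n},L_1]=\frac12\theta(M_3\tilde L_{2n-2}+\tilde L_{2n-2}M_3)+\frac14[\tilde L_{2n-4},M_5]$ together with $[\tilde L_n,L_1^2]=0$.

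For (a), I would compute the coefficient of $u^{2n+1}$ in $A(u)\tilde L(u)$, namely $\sum_{j+k=n}\theta a_{j+1}M_{2j+1}\cdot\theta\tilde L_{2k}$, and show by induction on $n$ (using that $M_{2j+1}^2$ commutes with $\tilde L_{2k}$, so one can pair up terms $a_{j+1}M_{2j+1}\tilde L_{2k}$ symmetrically) that it equals $\theta\tilde L_{2n+1}$; the relation $\tilde L_{2n+1}=\frac12\theta(L_1\tilde L_{2n}+\tilde L_{2n}L_1)+\frac14[\tilde L_{2n-2},M_3]$ is exactly the ``shift by one $M_1=L_1$ or by one $M_3$'' step that makes the induction close, with the coefficient bookkeeping handled by the $a_n$ recursion. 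The identity with $A'(u)=\frac1i A(iu)$ on the right is the mirror statement obtained by commuting $\tilde L(u)$ past the $M$'s from the other side; the substitution $u\mapsto iu$ and the factor $\frac1i$ are forced because $A$ collects odd powers with coefficients $a_{n+1}$ while the ``right'' version naturally produces $a_n$-type coefficients whose generating series is $A(iu)/i$. Statement (b) is proved the same way with $B(u)$ in place of $A(u)$, the only change being the index shift $a_{n+1}\leadsto a_n$, which is precisely what turns $\tilde L_{2n+1}$ into $\hat L_{2n+1}=[L_{2n},L_1]$ (recall $\hat L_{n+1}=[L_n,L_1]$); here one uses the second family of recursions, $[\tilde L_{2n},L_1]=\dots$, rather than the first.

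For (c), the identity $[A,B]=0$ follows because $A(u)$ and $B(u)$ are both (after the appropriate index shift) polynomials in the pairwise-commuting-mod-squares generators $M_{2m+1}$: writing out $[A(u),B(u)]$ as a double sum and applying Corollary~\ref{mrelations}, the odd-$m+n$ terms vanish outright and the even-$m+n$ terms cancel in pairs by the antisymmetry $(-1)^{(m-n)/2}=-(-1)^{(n-m)/2}$ combined with $a_{m+1}a_n=a_{n+1}a_m$-type relations coming from the explicit formula for $a_n$; the $A',B'$ version is the $u\mapsto iu$ image. For (d) I would expand $A(u)^2=\sum_n\theta^2\big(\sum_{j+k=n}a_{j+1}a_{k+1}M_{2j+1}M_{2k+1}\big)u^{2n+2}$, symmetrize using Corollary~\ref{mrelations} to replace $M_{2j+1}M_{2k+1}$ by $\frac12[M_{2j+1},M_{2k+1}]+\tfrac12\{\dots\}$ — actually the clean route is to note only the ``square'' terms $M_{2m+1}^2$ survive after symmetrization, with total coefficient $\theta^2\sum$ of products $a\cdot a$ weighted by signs $(-1)^{(j-k)/2}$ — and then recognize the resulting scalar generating function as the stated combination of $X(\alpha u)$ and $X(\beta u)$ by plugging $a_n=\frac{(\frac12 q)^n-(-\frac12 q^{-1})^n}{\frac12\theta'}$ and using $\alpha^2=\frac{iq}2$, $\beta^2=\frac{iq^{-1}}2$ to match powers of $u$; the $\frac14 B^2$ formula is the same computation with $a_{n+1}\mapsto a_n$, which swaps the roles of $\alpha$ and $\beta$. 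Finally (e) follows by expanding $\frac12[A(u),L_1u]=\frac12\sum_n\theta a_{n+1}[M_{2n+1},L_1]u^{2n+2}$ and using the recursions of Proposition~\ref{propo:central} to rewrite $[M_{2n+1},L_1]$ in terms of $M$-squares, then comparing with the sum $\frac1\theta(\frac14 B^2+A^2)$ computed in (d); the $a_n$-recursion $a_{n+2}=\frac12\theta a_{n+1}+\frac14 a_n$ is exactly the linear relation needed to equate the two sides coefficient by coefficient.

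I expect the main obstacle to be part (d) (and the (d)-input to (e)): keeping the signs $(-1)^{(m-n)/2}$, the index-parity case distinction from Corollary~\ref{mrelations}, and the two substitutions $u\mapsto\alpha u,\ \beta u$ all consistent while reducing a double sum of noncommuting $M$'s to a single scalar-weighted sum of the central squares $M_{2m+1}^2$. The cleanest way to control this is probably to first establish, as a lemma, the purely scalar identity $\sum_{j+k=n}(-1)^{?}a_{j+1}a_{k+1}=(\text{coeff in }\tfrac{q}{\theta' i}X(\alpha u)-\tfrac{q^{-1}}{\theta' i}X(\beta u))$ using the closed form of $a_n$ and the geometric-series identity $\big(\sum (\tfrac12 q)^n u^n\big)\big(\sum(-\tfrac12 q^{-1})^n u^n\big)$-type factorizations, and only then lift it to the algebra via centrality of the $M_{2m+1}^2$. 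Everything else is a disciplined but routine generating-function bookkeeping built on Proposition~\ref{propo:central}, Corollary~\ref{mrelations}, and Lemma~\ref{lemmaseries}.
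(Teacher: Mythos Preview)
Your approach matches the paper's exactly: the paper's entire proof is the single sentence ``(a), (b) follow from Prop.~\ref{propo:central} and (c), (d), (e) follow from Corollary~\ref{mrelations},'' and you are simply unpacking what that means at the level of coefficients and generating functions.

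There is, however, a genuine slip in your mechanism for (c). You claim the even-parity terms in $[A(u),B(u)]$ cancel in pairs via ``the antisymmetry $(-1)^{(m-n)/2}=-(-1)^{(n-m)/2}$ combined with $a_{m+1}a_n=a_{n+1}a_m$-type relations.'' Both assertions are false: $(-1)^{(m-n)/2}=(-1)^{(n-m)/2}$ (the exponent changes sign, but $(-1)^{-k}=(-1)^k$), and $a_{m+1}a_n=a_{n+1}a_m$ does not hold in general. The correct pairing is $(j,k)\leftrightarrow(k-1,j+1)$ on the index set $\{j\ge 0,\ k\ge 1,\ j+k=2m\}$: this is a fixed-point-free involution (a fixed point would force $j+k$ odd), and the two paired terms contribute
\[
a_{j+1}a_k\,(-1)^{(j-k)/2}\quad\text{and}\quad a_{(k-1)+1}a_{j+1}\,(-1)^{(k-1-(j+1))/2}=a_{j+1}a_k\cdot\bigl(-(-1)^{(j-k)/2}\bigr),
\]
which indeed cancel. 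With this correction your argument for (c) goes through; (d) and (e) are then the straightforward scalar computations you outline, using the closed form of $a_n$ and $\alpha^2=\tfrac{iq}{2}$, $\beta^2=\tfrac{iq^{-1}}{2}$.
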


\begin{proof}
(a), (b) follow from Prop. ~\ref{propo:central} and  (c), (d), (e) follow from Corollary ~\ref{mrelations}.

\end{proof}

\subsection{Relations between $X_n$ and $L_n$ and $Y_n$ and $L_n$.}

In this section we will use differential operators to get the expressions of  

$[X_1,L_{2n}],\ [X_1,L_{2n}^{'}],\  [X_2,L_{2n}],\ [X_2,L_{2n}^{'}]$ and

 $[L_{2n},Y_1],\ [L_{2n}^{'},Y_1],\ [L_{2n},Y_2],\ [L_{2n}^{'},Y_2]$.

In general, by Lemma ~\ref{lemma1} and  $L_1^2$ lies in the center of the subalgebra generated by all $L_n$, one can get $[X_{k},L_{2n}]$ for $k\ge 3$ and others immediately.

Define two sequences of polynomials $P_n$ and $Q_n$:

$P_0=1, P_n=q^n+q^{n-1}+(-1)^{n-1}P_{n-1}$,

$Q_0=1, Q_n=q^n-q^{n-1}+(-1)^nQ_{n-1}$. 
We have

$P_n=2q^{2 \lfloor \frac{n}{2}\rfloor}\frac{1-(-q^{-2})^{\lfloor \frac{n}{2}\rfloor+1}}{1+q{-2}}+(-1)^{n+1}q^n,$

$Q_n=(-1)^n 2q^{2 \lfloor \frac{n}{2}\rfloor}\frac{1-(-q^{-2})^{\lfloor \frac{n}{2}\rfloor+1}}{1+q{-2}}+(-1)^{n+1}q^n$ and  $P_{2n}=Q_{2n}$.\\

The next proposition is for the proof of $[\tilde{L}_{2n}, \tilde{L}_{2m}]=0$, $\forall n,m\ge1$ in Prop. ~\ref{propo:someimaginaryrootvectorscomm}.
\begin{propo} \label{propo:realandim} We have the following relations in $B(\mathbb{V})$.

$[X_1, L_{2n}]=\sum_{i=0}^{n-1}P_{2n-1-2i}\theta L_{2i}X_{2n+1-2i}-\sum_{i=0}^{n-1}P_{2n-2-2i}\theta L_{2i+1}'X_{2n-2i}$,

$[X_1, L_{2n}']=\sum_{i=0}^{n-1}q^{2n-1-2i}\theta L_{2i}'X_{2n+1-2i}-\sum_{i=0}^{n-1}q^{2n-2-2i}\theta L_{2i+1}X_{2n-2i}$,

$[X_2, L_{2n}]=\sum_{i=0}^{n-1}q^{2n-1-2i}\theta L_{2i}X_{2n+2-2i}+\sum_{i=0}^{n-1}q^{2n-2-2i}\theta L_{2i+1}'X_{2n-2i+1}$,

$[X_2, L_{2n}']=\sum_{i=0}^{n-1}Q_{2n-1-2i}\theta L_{2i}'X_{2n+2-2i}+\sum_{i=0}^{n-1}Q_{2n-2-2i}\theta L_{2i+1}X_{2n-2i+1}$,
\\

$[L_{2n}, Y_1]=\sum_{i=0}^{n-1}q^{2n-1-2i}\theta Y_{2n+1-2i}L_{2i}+\sum_{i=0}^{n-1}q^{2n-2-2i}\theta Y_{2n-2i}L_{2i+1}$,

$[L_{2n}', Y_1]=\sum_{i=0}^{n-1}Q_{2n-1-2i}\theta Y_{2n+1-2i}L_{2i}'+\sum_{i=0}^{n-1}Q_{2n-2-2i}\theta Y_{2n-2i}L_{2i+1}'$,

$[L_{2n}, Y_2]=\sum_{i=0}^{n-1}P_{2n-1-2i}\theta Y_{2n+2-2i}L_{2i}-\sum_{i=0}^{n-1}P_{2n-2-2i}\theta Y_{2n-2i+1}L_{2i+1}$,

$[L_{2n}', Y_2]=\sum_{i=0}^{n-1}q^{2n-1-2i}\theta Y_{2n+2-2i}L_{2i}'-\sum_{i=0}^{n-1}q^{2n-2-2i}\theta Y_{2n-2i+1}L_{2i+1}'$.

\end{propo}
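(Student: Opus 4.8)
The plan is to prove all eight identities simultaneously by induction on $n$, using the differential operator criterion (Prop. 1.7): an element of $B(\mathbb{V})$ is determined by its images under $\partial_1^R,\partial_2^R$ (or $\partial_1^L,\partial_2^L$). The base case $n=1$ is contained in Prop.~\ref{propo1} together with the quantum Serre relations and (a) of Lemma~\ref{lemma1}; I would check that $P_1=q+1$, $Q_1=q-1$, $P_0=Q_0=1$ indeed reproduce (2.1a)--(2.1d) and (2.3a)--(2.3d). By the obvious left-right symmetry of $B(\mathbb{V})$ (swapping $x_1\leftrightarrow x_2$, which exchanges $X_n\leftrightarrow Y_n$, $L_n\leftrightarrow L_n'$ and transposes brackets), the four identities for $[X_i,L_{2n}^{(\prime)}]$ and the four for $[L_{2n}^{(\prime)},Y_i]$ are exchanged, so it suffices to establish the first four.

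For the inductive step I would apply $\partial_2^R$ and $\partial_1^L$ to the proposed right-hand sides and match them against the known formulas from Lemma~\ref{lemma:diff}. Concretely, $\partial_2^R(L_{2n})=-\theta(X_1L_{2n-1}'-q^{-2}L_{2n-1}'X_1)$ and $\partial_2^R(Y_{2n+1-2i})=\theta L_{2n-2i}'$, etc., so applying $\partial_2^R$ to the claimed expression for $[X_1,L_{2n}]$ produces a combination of terms involving $L_{2i}$, $L_{2i+1}'$, $X_m$, $Y_m$ at strictly lower imaginary degree, which by the induction hypothesis (and (a) of Lemma~\ref{lemma1}, letting $L_1^2$ pass through) can be rewritten and compared with $\partial_2^R$ applied to $[X_1,L_{2n}]=X_1L_{2n}-q_{1,2n\delta}L_{2n}X_1$ computed directly. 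The combinatorial identities that must come out are precisely the recursions $P_n=q^n+q^{n-1}+(-1)^{n-1}P_{n-1}$ and $Q_n=q^n-q^{n-1}+(-1)^nQ_{n-1}$ defining $P_n,Q_n$, together with $P_{2n}=Q_{2n}$; I would verify these drop out of the coefficient bookkeeping. One must do the same with $\partial_1^L$ (using $\partial_1^L(L_{2n})$, which is not listed but follows from $\partial_1^L(L_{2n-1})$ and the structure of $L_{2n}=[X_1,Y_{2n}]$, or alternatively $\partial_1^L(X_m)=\theta L_{m-1}^{(\prime)}$) to pin down the element uniquely. Since both differential operators annihilate the difference of the two sides and the difference has no constant term, Prop.~1.7 forces it to be zero.

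The main obstacle I anticipate is the bookkeeping in the $\partial_2^R$ computation: $L_{2n}$, $L_{2n}'$, $L_{2n-1}$, $L_{2n-1}'$ all enter through Lemma~\ref{lemma:diff}, and to close the induction one needs the relations of Lemma~\ref{lemma1}(b) expressing $L_{2n+1}'$ in terms of $L_{2n+1}$ and $[L_{2n},L_1]$ — but $[L_{2n},L_1]$ is an imaginary root vector of the same degree as $L_{2n+1}$, so extra care is needed to ensure the $P$-- versus $q$--power coefficients are correctly attributed between the $L_{2i}$-- and $L_{2i+1}'$--indexed sums. The fact that $P_{2n}=Q_{2n}$ (so the ``odd'' polynomials split into $P_{2n-1}$ versus $q^{2n-1}$ cases while the ``even'' ones agree) is the delicate point, and the two separate differential operators are both needed because $\partial_2^R$ alone will leave ambiguity in the $X_1$-leading term.

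Alternatively, if the differential-operator approach gets unwieldy, one can argue more directly: apply the braided Jacobi identity / skew derivations to expand $[X_1,L_{2n}]=[X_1,[L_{2n-2},L_1^2]+\cdots]$ — but since $[L_{2n-2},L_1^2]=0$ by Prop.~\ref{propo:central}(e), this route requires instead writing $L_{2n}$ via $[L_2,\tilde L_{2n-1}]$-type relations from Prop.~\ref{propo:central} and commuting $X_1$ past them inductively. I expect the differential-operator proof to be cleaner, so I would present that, relegating the coefficient recursion verification (that the $P_n,Q_n$ recursions are exactly what the comparison produces) to a short explicit computation.
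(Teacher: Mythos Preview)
Your central criterion is wrong: knowing $\partial_2^R(D)=0$ and $\partial_1^L(D)=0$ does \emph{not} force $D=0$ in $B(\mathbb{V})$. Proposition~1.3 requires both $\partial_i^R$ (or both $\partial_i^L$), not a mixture. A counterexample already appears in degree $\delta$: the element $D=x_2x_1-q^{-1}x_1x_2=(1-q^{-2})Y_1X_1-q^{-1}L_1$ is nonzero (the degree-$\delta$ component of $B(\mathbb{V})$ is two-dimensional with PBW basis $\{L_1,\,Y_1X_1\}$), yet $\partial_2^R(D)=q^{-1}x_1-q^{-1}x_1=0$ and $\partial_1^L(D)=q^{-1}x_2-q^{-1}x_2=0$. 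So the scheme you outline cannot close. To repair it you would need $\partial_1^R$ of $L_{2n}$, $L_{2i+1}'$ and $X_m$ (or $\partial_2^L$ of the analogous objects), none of which are supplied by Lemma~\ref{lemma:diff}; deriving them is a separate induction of comparable size to the proposition itself.

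The paper's proof is organised quite differently. It applies a differential operator only once, and not to the putative identity but to the already known vanishing relation $[L_{2n+1},L_1^2]=0$ from Prop.~\ref{propo:central}: computing $\partial_2^R([L_{2n+1},L_1^2])=0$ yields $[X_2,L_{2n+1}]=q\bigl[[X_1,L_{2n}']+q^{-1}\theta L_{2n}'X_1,\,L_1^2\bigr]$, and the right-hand side is evaluated from the induction hypothesis for $[X_1,L_{2n}']$ together with Lemma~\ref{lemma1}(a). From this the formula for $[X_1,L_{2n+2}']=[X_1,[X_2,Y_{2n+1}]]$ follows by one application of the braided Jacobi identity; then $[X_1,L_{2n+1}]$ is computed via $L_{2n+1}=L_{2n+1}'-[L_{2n},L_1]$, and finally $[X_1,L_{2n+2}]$ via $L_{2n+2}=L_{2n+2}'+[L_{2n+1},L_1]$. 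The recursions for $P_n,Q_n$ emerge from exactly these Jacobi expansions, not from comparing differential images. Your ``Alternatively'' paragraph gestures in this direction but misses the key step: the input is not a rewriting of $L_{2n}$ through Prop.~\ref{propo:central}, but the single new relation extracted from $\partial_2^R$ of $[L_{2n+1},L_1^2]=0$.
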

\begin{proof}We use induction. By Prop. ~\ref{propo1} we know the proposition holds for $n=1$. Suppose the proposition holds for $n$, we consider the case of $n+1$. 

We have $[L_{2n+1}, L_1^2]=0$. Hence in $B(\mathbb{V})$, by Lemma ~\ref{lemma:diff} we have 
\begin{align}
0&=\partial_2^R([L_{2n+1}, L_1^2]) \nonumber \\
&=L_{2n+1}(-q^{-1}\theta X_2)+\partial_2^R(L_{2n+1})L_1^2-(L_1^2\partial_2^R(L_{2n+1})+q^{-1}\theta X_2 L_{2n+1})\nonumber \\
&=-q^{-1}\theta [X_2, L_{2n+1}]+[\partial_2^R(L_{2n+1}), L_1^2],  \nonumber 
\end{align}
i.e. $[X_2, L_{2n+1}]=q[[X_1, L_{2n}']+q^{-1}\theta L_{2n}'X_1, L_1^2].$

Using induction hypothesis, we have 

$[X_2, L_{2n+1}]=\sum_{i=0}^{n}q^{2n-2i}\theta L_{2i}'X_{2n+3-2i}-\sum_{i=0}^{n-1}q^{2n-1-2i}\theta L_{2i+1}X_{2n-2i+2}.$

Then\begin{align}
[X_1, L_{2n+2}']&=[X_1, [X_2, Y_{2n+1}]]=q[X_2, L_{2n+1}]-\theta L_{2n+1}X_2 \nonumber\\
&=\sum_{i=0}^{n}q^{2n-2i+1}\theta L_{2i}'X_{2n+3-2i}-\sum_{i=0}^{n-1}q^{2n-2i}\theta L_{2i+1}X_{2n-2i+2}-\theta L_{2n+1}X_2\nonumber\\
&=\sum_{i=0}^{n}q^{2n-2i+1}\theta L_{2i}'X_{2n+3-2i}-\sum_{i=0}^{n}q^{2n-2i}\theta L_{2i+1}X_{2n-2i+2}.\nonumber
\end{align}

$[X_1, L_{2n+2}]=[X_1, L_{2n+2}']+[X_1, [L_{2n+1}, L_1]]$, so we need to calculate $[X_1, L_{2n+1}]$. 
\begin{align}
[X_1, L_{2n+1}]=&[X_1, L_{2n+1}']-[X_1, [L_{2n}, L_1]]    \nonumber \\
=&[X_1, [X_2, Y_{2n}]]-[[X_1, L_{2n}], L_1]+[X_2, L_{2n}]  \nonumber \\
=&(q+1)[X_2, L_{2n}]+\theta L_{2n}X_2-[[X_1, L_{2n}], L_1]\nonumber \\
=&\sum_{i=0}^{n-1}(q^{2n-2i}+q^{2n-1-2i})\theta L_{2i}X_{2n+2-2i}\nonumber \\
&+\sum_{i=0}^{n-1}(q^{2n-1-2i}+q^{2n-2-2i})\theta L_{2i+1}'X_{2n-2i+1}\nonumber \\
&+\theta L_{2n}X_2-\left(\sum_{i=0}^{n-1}P_{2n-1-2i}\theta (L_{2i}X_{2n+2-2i}+\hat{L}_{2i+1}X_{2n+1-2i})\nonumber \right.\\
&\left.-\sum_{i=0}^{n-1}P_{2n-2-2i}\theta (L_{2i+1}'X_{2n-2i+1}-\hat{L}_{2i+2}X_{2n-2i})\right), \nonumber 
\end{align}
by the definition of $P_n$, we have 
\begin{align}
[X_1, L_{2n+2}]=&\sum_{i=0}^{n-1}P_{2n-2i}\theta L_{2i}X_{2n+2-2i}-\sum_{i=1}^{n}P_{2n-2i}\theta \hat{L}_{2i}X_{2n+2-2i}+\theta L_{2n}X_2 \nonumber \\
&+\sum_{i=0}^{n-1}P_{2n-1-2i}\theta L_{2i+1}'X_{2n+1-2i}-\sum_{i=0}^{n-1}P_{2n-1-2i}\theta \hat{L}_{2i+1}X_{2n+1-2i} \nonumber \\
=&\sum_{i=0}^{n}P_{2n-2i}\theta L_{2i}'X_{2n+2-2i}+\sum_{i=0}^{n-1}P_{2n-1-2i}\theta L_{2i+1}X_{2n+1-2i}.\nonumber 
\end{align}
Then 
\begin{align}
[X_1, L_{2n+2}]&=[X_1, L_{2n+2}']+[X_1, [L_{2n+1}, L_1]] \nonumber \\
&=[X_1, L_{2n+2}']+[[X_1, L_{2n+1}], L_1]]+[X_2, L_{2n+1}] \nonumber \\
&=(q+1)[X_2, L_{2n+1}]-\theta L_{2n+1}X_2+[[X_1, L_{2n+1}], L_1]]. \nonumber 
\end{align}

All the terms are computable now and after similar compuitation to last paragraph we have 

$
[X_1, L_{2n+2}]=\sum_{i=0}^{n}P_{2n+1-2i}\theta L_{2i}X_{2n+3-2i}-\sum_{i=0}^{n}P_{2n-2i}\theta L_{2i+1}'X_{2n+2-2i}.
$

$[X_2, L_{2n+2}]$ and $[X_2, L_{2n+2}']$ and $[L_{2n+2}, Y_1]$, $[L_{2n+2}', Y_1]$, $[L_{2n+2}, Y_2]$, $[L_{2n+2}', Y_2]$ follow from a completely similar argument, we omit the detail.

\end{proof}

Equation $[\tilde{L}_{2m},\tilde{L}_{2n}]=0$ is equivalent to $[L_{2m},L_{2n}]+[L_{2m}',L_{2n}]+[L_{2m},L_{2n}']+[L_{2m}',L_{2n}']=0$. So we need to know the expressions of each summand.

\begin{lemma}\label{lemma:tildeandhat}
Suppose $[\tilde{L}_{2k-2}, \tilde{L}_{2N}]=0$ for $\forall 2\le k\le n$ and for $\forall N>0$, then $[\tilde{L}_{2k-1}, \hat{L}_{2N+1}]=[\hat{L}_{2k-1}, \tilde{L}_{2N+1}]$ for all $1\le k\le n$ and $\forall N>0$.
\end{lemma}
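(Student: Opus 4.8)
The plan is to extract the identity $[\tilde{L}_{2k-1},\hat{L}_{2N+1}]=[\hat{L}_{2k-1},\tilde{L}_{2N+1}]$ from the recursive structure of Prop.~\ref{propo:central}, using the hypothesis $[\tilde{L}_{2k-2},\tilde{L}_{2N}]=0$ (for $2\le k\le n$, all $N>0$) to kill the error terms. First I would recall the defining relations: $\hat{L}_{m+1}=[L_m,L_1]$, $\tilde{L}_m=\tfrac12(L_m+L_m')$, and $M_1=L_1$, $M_3=[L_2,L_1]$. From Prop.~\ref{propo:central}(a),(d) we have recursions of the shape $\tilde{L}_{2k-1}=\tfrac12\theta(L_1\tilde{L}_{2k-2}+\tilde{L}_{2k-2}L_1)+\tfrac14[\tilde{L}_{2k-4},M_3]$ when $2k-1\equiv 1\pmod 4$, and the analogue with a different last bracket when $2k-1\equiv 3\pmod 4$; similarly (b),(c) express $[\tilde{L}_{2k},L_1]$ (and hence, via $L_m'=L_m\pm\hat{L}$, the quantities $\hat{L}_{2k+1}$) in terms of $M_3\tilde{L}_{2k-2}+\tilde{L}_{2k-2}M_3$ plus a $\tfrac14[\tilde{L}_{2k-4},M_5]$ correction. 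The key structural observation is that both $\tilde{L}_{2k-1}$ and $\hat{L}_{2k-1}$ are built from the \emph{even} family $\tilde{L}_{2j}$, $j<k$, by the \emph{same} left/right multiplication operators (by $L_1$, $M_3$, $M_5$, etc., all of which are odd $M$-monomials), with only the scalar coefficients differing.

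The main step is an induction on $k$ (for fixed but arbitrary $N$; or better, a simultaneous induction on $k$ with $N$ universally quantified, to make the inner brackets available at every stage). At the inductive step I would write $\tilde{L}_{2k-1}$ and $\hat{L}_{2k-1}$ each as $\alpha(L_1\tilde{L}_{2k-2}+\tilde{L}_{2k-2}L_1)+\beta[\tilde{L}_{2k-4},M_j]+\dots$ with explicit scalars, then compute $[\tilde{L}_{2k-1},\hat{L}_{2N+1}]$ and $[\hat{L}_{2k-1},\tilde{L}_{2N+1}]$ using the braided Jacobi identity (1.1), which applies because all the pieces involved live in the imaginary root spaces where $\chi(\beta,\alpha)\chi(\alpha,\beta)=1$. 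Each term of $[\tilde{L}_{2k-1},\hat{L}_{2N+1}]$ then reduces — using $[\tilde{L}_{2k-2},\tilde{L}_{2N}]=0$ from the hypothesis, and using $[M_{2m+1},M_{2n+1}]$ from Corollary~\ref{mrelations} together with the centrality of $M_{2m+1}^2$ — to a combination of inner brackets $[\tilde{L}_{2j},\hat{L}_{2N+1}]$ and $[\hat{L}_{2j},\tilde{L}_{2N+1}]$ with $j<k$, and by the inductive hypothesis these inner ones already satisfy the desired symmetry; then one checks the scalars match between the two sides. The base case $k=1$ is $[\tilde{L}_1,\hat{L}_{2N+1}]=[\hat{L}_1,\tilde{L}_{2N+1}]$, where $\tilde{L}_1=L_1$ and $\hat{L}_1$ should be read as $0$ (or as $[L_0,L_1]=0$), so both sides vanish and the statement is trivial — alternatively $k=1$ follows directly from $[L_1,L_1^2]=0$ and Prop.~\ref{propo:central}(e)-type relations.

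I expect the main obstacle to be the bookkeeping of the correction terms $\tfrac14[\tilde{L}_{2k-4},M_3]$, $\tfrac14[\tilde{L}_{2k-4},M_5]$ and their iterates: when one commutes $\hat{L}_{2N+1}$ past such a term one generates, via (1.1), brackets like $[M_3,\hat{L}_{2N+1}]$ or $[\tilde{L}_{2k-4},[M_3,\hat{L}_{2N+1}]]$, and these must be re-expressed through lower $\tilde{L}$'s and through Corollary~\ref{mrelations} before the induction hypothesis can be invoked. The cleanest route is probably to pass to the series language of Section 2.3: translate $\sum\theta\hat{L}_{2n+1}u^{2n+1}=B(u)\tilde{L}(u)=\tilde{L}(u)B'(u)$ and $\sum\theta\tilde{L}_{2n+1}u^{2n+1}=A(u)\tilde{L}(u)=\tilde{L}(u)A'(u)$ from Prop.~\ref{seriesrelations}(a),(b), and the hypothesis $[\tilde{L}_{2k-2},\tilde{L}_{2N}]=0$ as commutativity of the relevant coefficients of $\tilde{L}(u)$; then the claimed identity becomes a coefficient comparison in the product of two commuting series built from $A,B,A',B'$ (which commute among the primed/unprimed pairs by Prop.~\ref{seriesrelations}(c)) and $\tilde{L}(u)$, and the messy inner brackets collapse automatically. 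Either way, the heart of the matter is that $\tilde{L}_{2k-1}$ and $\hat{L}_{2k-1}$ are the "same" expression in the even generators up to which odd $M$-series ($A$ vs.\ $B$, $A'$ vs.\ $B'$) is used as multiplier, and conjugating by $\tilde{L}_{2N+1}$ vs.\ $\hat{L}_{2N+1}$ swaps those roles symmetrically.
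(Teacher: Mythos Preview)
Your primary plan --- expand $\tilde L_{2k-1}$ via the recursion of Prop.~\ref{propo:central}, bracket with the other side, use the hypothesis $[\tilde L_{2k-2},\tilde L_{2N}]=0$ to cancel the leading terms, and iterate --- is exactly what the paper does. Two points are worth sharpening. First, the paper's opening move is to rewrite $\hat L_{2m+1}=[\tilde L_{2m},L_1]$ (since $L_{2m}-\tilde L_{2m}=\tfrac12[L_{2m-1},L_1]$ and $[L_{2m-1},L_1^2]=0$ by Prop.~\ref{propo:central}(e)); this lets you work entirely with $\tilde L$'s and $M$'s from the outset and dissolves much of the bookkeeping you anticipated. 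Second, the reduction is not a clean induction on $k$ returning the same-shape statement at $k-1$: after one step the identity becomes
\[
\bigl[[\tilde L_{2n-4},M_3],[\tilde L_{2N},L_1]\bigr]=\bigl[[\tilde L_{2n-2},L_1],[\tilde L_{2N-2},M_3]\bigr],
\]
and the paper then expands the two $[\tilde L_{\bullet},L_1]$ factors via Prop.~\ref{propo:central}(b),(c), producing the analogous identity with $M_5$ in place of $M_3$ and both even indices lowered again. So it is a two-index descent (the $M$-label goes up while both $\tilde L$-indices go down) terminating when one side hits $\tilde L_0$ or $\tilde L_{-2}$; Corollary~\ref{mrelations} is not actually needed here.

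Your series alternative is not what the paper does. It is plausible, but note that the hypothesis only gives commutativity of the $\tilde L(u)$-coefficients up through degree $2(n-1)$, so you would still be doing a truncated coefficient argument rather than a clean series identity; and since $[M_{2m+1},\tilde L_{2j}]$ is not zero in general, commuting $A,B,A',B'$ past $\tilde L(v)$ is not automatic. So the direct descent is in fact the cleaner route.
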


\begin{proof}
We only need to prove 
\begin{align}
[\tilde{L}_{2n-1}, [\tilde{L}_{2N}  ,L_1]]=[[\tilde{L}_{2n-2}  ,L_1], \tilde{L}_{2N+1}].  \nonumber
\end{align}
By Prop. ~\ref{propo:central},
\begin{align}
LHS&=\left[\frac{1}{2}\theta L_1\tilde{L}_{2n-2}+\frac{1}{2}\theta \tilde{L}_{2n-2}L_1+\frac{1}{4}[\tilde{L}_{2n-4},M_3],[\tilde{L}_{2N}  ,L_1]\right] \nonumber\\
&=\frac{1}{2}\theta \left[L_1,[\tilde{L}_{2n-2},[\tilde{L}_{2N}  ,L_1]]\right]'+\frac{1}{4}\left[[\tilde{L}_{2n-4},M_3],[\tilde{L}_{2N}  ,L_1]\right].\nonumber
\end{align}

\begin{align}
RHS&=\left[[\tilde{L}_{2n-2}  ,L_1], \frac{1}{2}\theta L_1\tilde{L}_{2N}+\frac{1}{2}\theta \tilde{L}_{2N}L_1+\frac{1}{4}[\tilde{L}_{2N-2},M_3]\right]\nonumber\\
&=-\frac{1}{2}\theta \left[L_1,[[\tilde{L}_{2n-2}  ,L_1],\tilde{L}_{2N}]\right]'+\frac{1}{4}\left[[\tilde{L}_{2n-2}  ,L_1]  , [\tilde{L}_{2N-2},M_3] \right]\nonumber
\end{align}
Since $[\tilde{L}_{2n-2}, \tilde{L}_{2N}]=0$, we only need to prove 
\begin{align}
\left[[\tilde{L}_{2n-4},M_3],[\tilde{L}_{2N}  ,L_1]\right]=\left[[\tilde{L}_{2n-2}  ,L_1]  , [\tilde{L}_{2N-2},M_3] \right].\nonumber
\end{align}
Use the expressions of $[\tilde{L}_{2N}  ,L_1]$ and $[\tilde{L}_{2n-2}  ,L_1]$ in Prop. ~\ref{propo:central} and similar calculation, we get we only need to  prove 
\begin{align}
\left[[\tilde{L}_{2n-4},M_3],[\tilde{L}_{2N-4}  ,M_5]\right]=\left[[\tilde{L}_{2n-6}  ,M_5]  , [\tilde{L}_{2N-2},M_3] \right]. \nonumber
\end{align}
Repeat these steps this lemma will be proved.
\end{proof}

\begin{lemma} Suppose $[\tilde{L}_{2k},\tilde{L}_{2N}]=0$ for $k\le n-1$ and for all $N>0$, then $[\tilde{L}_{2n},\tilde{L}_{2N}]=0$ for all $N>0$ is equivalent to 

$-[[\tilde{L}_{2n-2},\tilde{L}_{2N+1}],L_1]+[[\tilde{L}_{2N+2},\tilde{L}_{2n-3}],L_1]=0$ for all $N>0$.
\end{lemma}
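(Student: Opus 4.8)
The plan is to establish the claimed equivalence by expanding $[\tilde{L}_{2n},\tilde{L}_{2N}]$ via the $\tilde{L}_{2n}=[L_2,\tilde{L}_{2n-1}]$ type of relations obtained in Proposition~\ref{propo:central}, exactly as was done in Lemma~\ref{lemma:tildeandhat}, and then collecting terms. Concretely, the starting point is that $[\tilde{L}_{2n},\tilde{L}_{2N}]=0$ for all $N>0$ is what we want, and the induction hypothesis $[\tilde{L}_{2k},\tilde{L}_{2N}]=0$ for $k\le n-1$ lets us move lower-index $\tilde{L}$'s freely past one another. First I would use Proposition~\ref{propo:central}(b)/(c), i.e. $[\tilde{L}_{2n},L_1]=\tfrac12\theta M_3\tilde{L}_{2n-2}+\tfrac12\theta\tilde{L}_{2n-2}M_3+\tfrac14[\tilde{L}_{2n-4},M_5]$ and the analogous formula with $\tilde{L}_{2n-1}=\tfrac12\theta L_1\tilde{L}_{2n-2}+\tfrac12\theta\tilde{L}_{2n-2}L_1+\tfrac14[\tilde{L}_{2n-4},M_3]$, to write $\tilde{L}_{2n}=[L_2,\tilde{L}_{2n-1}]$, so that $[\tilde{L}_{2n},\tilde{L}_{2N}]=[[L_2,\tilde{L}_{2n-1}],\tilde{L}_{2N}]$; then I apply the braided Jacobi identity (1.1), which is available because all these elements lie in imaginary-degree root spaces, to rewrite this as $[L_2,[\tilde{L}_{2n-1},\tilde{L}_{2N}]]-\chi(\cdot,\cdot)[\tilde{L}_{2n-1},[L_2,\tilde{L}_{2N}]]$.

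The next step is to substitute $[L_2,\tilde{L}_{2N}]=[\tilde{L}_{2N+2},L_1]$ (which is the defining relation used throughout, coming from $[L_2,\tilde{L}_{2N}]=0$ being false but $[L_2,\tilde{L}_{2N}]$ being expressible through $\tilde{L}_{2N+1}$ and hence through $[\tilde{L}_{2N+2},L_1]$ — precisely the computation appearing just before and inside Proposition~\ref{propo:central}), and similarly to identify $[\tilde{L}_{2n-1},\tilde{L}_{2N}]$ with an $\hat{L}$-type combination. Here Lemma~\ref{lemma:tildeandhat} becomes essential: it gives $[\tilde{L}_{2k-1},\hat{L}_{2N+1}]=[\hat{L}_{2k-1},\tilde{L}_{2N+1}]$, which is exactly the symmetry needed to trade the two "mixed" brackets against each other and cancel the bulk of the expansion. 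After these substitutions, the induction hypothesis kills every term in which both arguments of an inner bracket have index strictly below $2n$ and $2N+2$, and what survives should collapse to a sum of two terms of the shape $-[[\tilde{L}_{2n-2},\tilde{L}_{2N+1}],L_1]$ and $[[\tilde{L}_{2N+2},\tilde{L}_{2n-3}],L_1]$, up to an overall nonzero scalar (a power of $\theta$ times a power of $\tfrac12$), which is where the stated equivalence comes from; since $q$ is not a root of unity, $\theta\neq0$ and the scalar can be divided out.

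The main obstacle I expect is the bookkeeping in the cancellation: one must carefully track the braiding factors $\chi(\alpha,\beta)$ coming out of each application of (1.1) and of the identities $[uv,w]$, $[u,vw]$, confirm that on imaginary degrees they are all $\pm1$ (indeed $+1$ by the remark that $\chi(\beta,\alpha)\chi(\alpha,\beta)=1$ for imaginary $\beta$), and verify that the $M_3,M_5$-correction terms in Proposition~\ref{propo:central}(b)--(d) either cancel pairwise via Lemma~\ref{lemma:tildeandhat} or get absorbed into the two surviving terms after one further round of expansion (as in the "repeat these steps" ending of Lemma~\ref{lemma:tildeandhat}). A secondary subtlety is making sure the reduction is genuinely an \emph{equivalence} and not merely an implication: the chain of rewrites is reversible because each step only uses identities valid in $B(\mathbb{V})$, so from the vanishing of $-[[\tilde{L}_{2n-2},\tilde{L}_{2N+1}],L_1]+[[\tilde{L}_{2N+2},\tilde{L}_{2n-3}],L_1]$ for all $N$ one recovers $[\tilde{L}_{2n},\tilde{L}_{2N}]=0$ by running the computation backwards. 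Once this lemma is in place it feeds directly into Proposition~\ref{propo:someimaginaryrootvectorscomm} (the full commutativity $[\tilde{L}_{2m},\tilde{L}_{2n}]=0$), which is the remaining ingredient for determining the imaginary root multiplicities.
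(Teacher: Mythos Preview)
Your plan rests on two identities that are both false, and this breaks the whole outline.

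First, $\tilde{L}_{2n}=[L_2,\tilde{L}_{2n-1}]$ does not hold. What Proposition~\ref{propo:central} gives (see the lines ``$[\tilde{L}_{4k+4},L_1]=[L_2,\tilde{L}_{4k+3}]$'' and ``$[\tilde{L}_{4k+6},L_1]=[L_2,\tilde{L}_{4k+5}]$'' in its proof) is $[\tilde{L}_{2n},L_1]=[L_2,\tilde{L}_{2n-1}]$, not an expression for $\tilde{L}_{2n}$ itself. So from this identity you can only access $[[\tilde{L}_{2n},L_1],\tilde{L}_{2N}]$, not $[\tilde{L}_{2n},\tilde{L}_{2N}]$; and there is no mechanism in the paper to pass from the vanishing of the former to that of the latter. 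Second, your substitution $[L_2,\tilde{L}_{2N}]=[\tilde{L}_{2N+2},L_1]$ is wrong for the same reason and, worse, $[L_2,\tilde{L}_{2N}]=0$ is actually \emph{true} (it is stated explicitly in the proof of Proposition~\ref{propo:central}, and follows from (e), (f) there since $L_2=\tilde{L}_2+L_1^2$). With both of these corrected, your Jacobi expansion of $[[L_2,\tilde{L}_{2n-1}],\tilde{L}_{2N}]$ collapses rather than producing the two target terms, and you never touch $[\tilde{L}_{2n},\tilde{L}_{2N}]$ at all.

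The paper does something quite different: it does not stay inside the imaginary subalgebra. It uses the explicit formulas of Proposition~\ref{propo:realandim} for $[X_1,L_{2n}]$, $[X_1,L_{2n}']$, $[X_2,L_{2n}]$, $[X_2,L_{2n}']$ and computes $[[X_1,L_{2n}],Y_{2N}]$, $[[X_1,L_{2n}'],Y_{2N}]$, $[[X_2,L_{2n}],Y_{2N-1}]$, $[[X_2,L_{2n}'],Y_{2N-1}]$ in two ways (exactly the technique of Proposition~\ref{propo:central}). This produces closed expressions for $[L_{2n},L_{2N}]$, $[L_{2n}',L_{2N}]$, $[L_{2n},L_{2N}']$, $[L_{2n}',L_{2N}']$ with the coefficients $P_{2n-2-2i}$, $q^{2n-2-2i}$, $Q_{2n-2-2i}$. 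Summing the four and rewriting via $\tilde{L}$, $\hat{L}$, one then invokes Lemma~\ref{lemma:tildeandhat} and the recursion $P_{2n-2-2i}=q^{2n-2-2i}-q^{2n-4-2i}-P_{2n-4-2i}$ to telescope the sum; the induction hypothesis kills all terms except the $i=n-1$ one, whose nonzero coefficient $(P_0+q^0)\theta$ yields the equivalence. The passage through real root vectors $X_i,Y_j$ is the key input you are missing; a purely imaginary-root-space manipulation of the type you sketch does not reach $[\tilde{L}_{2n},\tilde{L}_{2N}]$.
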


\begin{proof}
Consider $[[X_1, L_{2n}], Y_{2N}]$, using the same technique as Prop.~\ref{propo:central}, we can get
\begin{align}
 [L_{2n}, L_{2N}]=&\sum_{i=0}^{n-1}P_{2n-1-2i}\theta [L_{2n+2N-2i}, L_{2i}]  \nonumber \\
&+\sum_{i=0}^{n-1}P_{2n-2-2i}\theta(L_{2i+1}'L_{2n+2N-2i-1}'-L_{2n+2N-2i-1}L_{2i+1}). \nonumber
\end{align}
Note that in this calculation we do not need to  care about terms with form $Y_aL_bX_c$ or $Y_aL_b'X_c$. All  terms of this form would be cancelled at last by Corollary ~\ref{corol:Lnonzero} and Corollary ~\ref{corol:realrootvectors}.

Similarly consider $[[X_1, L_{2n}'],Y_{2N}]$ we can get 
\begin{align}
[L_{2n}', L_{2N}]=&\sum_{i=0}^{n-1}q^{2n-1-2i}\theta [L_{2n+2N-2i}, L_{2i}'] \nonumber \\
&+\sum_{i=0}^{n-1}q^{2n-2-2i}\theta (L_{2i+1}L_{2n+2N-1-2i}'-L_{2n+2N-1-2i}L_{2i+1}').\nonumber 
\end{align}

Simlary consider $[[X_2, L_{2n}],Y_{2N-1}]$ we can get 
\begin{align}
[L_{2n}, L_{2N}']=&\sum_{i=0}^{n-1}q^{2n-1-2i}\theta [L_{2n+2N-2i}', L_{2i}]  \nonumber \\
&+\sum_{i=0}^{n-1}q^{2n-2-2i}\theta (L_{2n+2N-1-2i}'L_{2i+1}-L_{2i+1}'L_{2n+2N-1-2i}).\nonumber
\end{align}

Simlary consider $[[X_2, L_{2n}'],Y_{2N-1}]$ we can get 
\begin{align}
[L_{2n}', L_{2N}']=&\sum_{i=0}^{n-1}Q_{2n-1-2i}\theta [L_{2n+2N-2i}', L_{2i}']  \nonumber \\
&+\sum_{i=0}^{n-1}Q_{2n-2-2i}\theta (L_{2n+2N-1-2i}'L_{2i+1}'-L_{2i+1}L_{2n+2N-1-2i}). \nonumber
\end{align}

Then by the construction of $P_n,\ Q_n$ and $[\tilde{L}_{2k},\tilde{L}_{2N}]=0$ for $k\le n-1$ and for all $N>0$, we have
\begin{align}
&[L_{2n}, L_{2N}]+[L_{2n}', L_{2N}]+[L_{2n}', L_{2N}]+[L_{2n}', L_{2N}']  \nonumber \\
=&\sum_{i=0}^{n-1}(P_{2n-2-2i}+q^{2n-2-2i})\theta ([L_{2n+2N-2i}, L_{2i}]-[L_{2n+2N-2i}', L_{2i}']\nonumber \\
&+\sum_{i=0}^{n-1}P_{2n-2-2i}\theta([L_{2i+1}', L_{2n+2N-2i-1}']-[L_{2i+1}, L_{2n+2N-2i-1}])\nonumber \\
&+\sum_{i=0}^{n-1}q^{2n-2-2i}\theta([L_{2i+1}, L_{2n+2N-2i-1}']-[L_{2i+1}', L_{2n+2N-2i-1}]).\nonumber
\end{align}
We have the following identities by definition and Lemma ~\ref{lemma:tildeandhat}:
\begin{align}
&[L_{2m}, L_{2n}]-[L_{2m}', L_{2n}']=[\hat{L}_{2m}, \tilde{L}_{2n}]+[\tilde{L}_{2m}, \hat{L}_{2n}], \ \forall m,n\ge0.\nonumber \\
&[L_{2i+1}', L_{2n+2N-2i-1}']-[L_{2i+1}, L_{2n+2N-2i-1}] \nonumber\\
=&[\tilde{L}_{2i+1}, \hat{L}_{2n+2N-2i-1}]+[\hat{L}_{2i+1}, \tilde{L}_{2n+2N-2i-1}] \nonumber\\
=&2[\tilde{L}_{2i+1}, \hat{L}_{2n+2N-2i-1}],\nonumber \\
&[L_{2i+1}, L_{2n+2N-2i-1}']-[L_{2i+1}', L_{2n+2N-2i-1}]\nonumber\\
=&[\tilde{L}_{2i+1}, \hat{L}_{2n+2N-2i-1}]-[\hat{L}_{2i+1}, \tilde{L}_{2n+2N-2i-1}]=0, \ \forall 0\le i\le n-1.\nonumber 
\end{align}

Then 
\begin{align}
&[L_{2n}, L_{2N}]+[L_{2n}', L_{2N}]+[L_{2n}', L_{2N}]+[L_{2n}', L_{2N}']  \nonumber \\
=&\sum_{i=0}^{n-1}(P_{2n-2-2i}+q^{2n-2-2i})\theta ([\hat{L}_{2n+2N-2i}, \tilde{L}_{2i}]+[\tilde{L}_{2n+2N-2i}, \hat{L}_{2i}])\nonumber \\
&+2\sum_{i=0}^{n-1}P_{2n-2-2i}\theta[\tilde{L}_{2i+1}, \hat{L}_{2n+2N-2i-1}]\nonumber \\
=&\sum_{i=0}^{n-1}(P_{2n-2-2i}+q^{2n-2-2i})\theta \left(-[\tilde{L}_{2n+2N-2i-1}, \hat{L}_{2i+1}]\nonumber \right.\\
&\left.+[L_1, [\tilde{L}_{2n+2N-2i-1}, \tilde{L}_{2i}]]+[[\tilde{L}_{2n+2N-2i}, \tilde{L}_{2i-1}], L_1]+[\tilde{L}_{2i-1}, \hat{L}_{2n+2N-2i+1}]\right)\nonumber \\
&+2\sum_{i=0}^{n-1}P_{2n-2-2i}\theta[\tilde{L}_{2i+1}, \hat{L}_{2n+2N-2i-1}].\nonumber 
\end{align}
By the construction of $P_n$, we get 
$$P_{2n-2-2i}=q^{2n-2-2i}-q^{2n-4-2i}-P_{2n-4-2i}.$$ 
Then
\begin{align}
 &[L_{2n}, L_{2N}]+[L_{2n}', L_{2N}]+[L_{2n}', L_{2N}]+[L_{2n}', L_{2N}']\nonumber \\
&=\sum_{i=0}^{n-1}(P_{2n-2-2i}+q^{2n-2-2i})\theta ([L_1, [\tilde{L}_{2n+2N-2i-1}, \tilde{L}_{2i}]]+[[\tilde{L}_{2n+2N-2i}, \tilde{L}_{2i-1}], L_1]).\nonumber 
\end{align}
When $q$ is not a root of unity, it is easy to see $(P_{2n-2-2i}+q^{2n-2-2i})\theta$ is not 0. Then by induction hypothesis we can see 
$$[L_{2n}, L_{2N}]+[L_{2n}', L_{2N}]+[L_{2n}', L_{2N}]+[L_{2n}', L_{2N}']=0$$
 is equivalent to 
$$-[[\tilde{L}_{2n-2},\tilde{L}_{2N+1}],L_1]+[[\tilde{L}_{2N+2},\tilde{L}_{2n-3}],L_1]=0$$ for $\forall N\ge 0$.

\end{proof}

Denote $D_{2k}=-[[\tilde{L}_{2k},\tilde{L}_{2N+1}],L_1]+[[\tilde{L}_{2N+2},\tilde{L}_{2k-1}],L_1]$.\\

Define $\bar{L}_{2n}$ by the following formal power series:

$$\theta \sum_{n=0}^{\infty}\tilde{L}_{2n}u^{2n}=\ex({\theta \sum_{n=1}^{\infty}\bar{L}_{2n}u^{2n}}).$$

Define $C_k(P(u))$ by the coefficient of $u^{k}$ in power series $P(u)$.

\begin{propo}\label{propo:someimaginaryrootvectorscomm} There exists a series of number ${R_{2k}}$ such that $[\bar{L}_{2k},L_1]=R_{2k}M_{2k+1}$ and $[\tilde{L}_{2m},\tilde{L}_{2n}]=0$ for $k,m,n \ge 1$. $R_{2k}$ is defined by 
\begin{align}
\theta R_{2k}=&C_{2k-2}\left(\frac{1}{2}\theta \right) + C_{2k-2}\left(\frac{1}{2}\theta \ex (\sum_{n=1}^{k-1}\theta R_{2n}u^{2n})\right) \nonumber \\
&+C_{2k-4}\left(\frac{1}{4}(\ex (\sum_{n=1}^{k-1}\theta R_{2n}u^{2n})-1)\right) -C_{2k}\left(\ex (\sum_{n=1}^{k-1}\theta R_{2n}u^{2n})\right).\nonumber 
\end{align}
Moreover, we have $R_{4k}=0$ and $R_{4k+2}=\frac{1}{2^{2k}}\frac{[2k+1]_q}{2k+1}$.
\end{propo}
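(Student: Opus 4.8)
The plan is to run a simultaneous induction on $k$, establishing three things at level $k$: that $[\tilde L_{2j},\tilde L_{2n}]=0$ for all $j\le k-1$ and all $n>0$; that $[\bar L_{2k},L_1]=R_{2k}M_{2k+1}$ for the constant $R_{2k}$ defined by the stated recursion; and then, using the previous lemma (which reduces $[\tilde L_{2k},\tilde L_{2n}]=0$ to the vanishing of $D_{2k}=-[[\tilde L_{2k},\tilde L_{2N+1}],L_1]+[[\tilde L_{2N+2},\tilde L_{2k-1}],L_1]$), that $[\tilde L_{2k},\tilde L_{2n}]=0$. The key bookkeeping device is to package everything into the generating series $\tilde L(u)=\theta\sum\tilde L_{2n}u^{2n}$ and the defining relation $\tilde L(u)=\ex(\theta\sum_{n\ge1}\bar L_{2n}u^{2n})$, so that $\bar L_{2k}$ is, up to lower-order corrections, a ``logarithmic primitive'' of $\tilde L(u)$. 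The claim $[\bar L_{2k},L_1]=R_{2k}M_{2k+1}$ says these primitives behave like the generators $M_{2k+1}$ under bracketing with $L_1$, which is exactly what makes $\tilde L(u)$ look like $\ex$ of a commuting family.

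First I would set up the series identity: from Prop.~\ref{propo:central}, $[\tilde L_{2k},L_1]=\tfrac12\theta M_3\tilde L_{2k-2}+\tfrac12\theta\tilde L_{2k-2}M_3+\tfrac14[\tilde L_{2k-4},M_5]$ and the analogous odd relations, which on the generating-function side give a closed formula for $[\tilde L(u),L_1 u]$ in terms of $\tilde L(u)$, $A(u)$, $B(u)$ (compare Prop.~\ref{seriesrelations}(e)). Then I would differentiate the exponential relation: writing $\tilde L(u)=\ex(F(u))$ with $F(u)=\theta\sum\bar L_{2n}u^{2n}$, the bracket $[\tilde L(u),L_1 u]$ expands, using the inductive commutativity $[\tilde L_{2j},\tilde L_{2n}]=0$ for $j<k$ (so that the relevant $\bar L_{2j}$, being polynomials in the $\tilde L_{2i}$, also commute with each other and with $\tilde L(u)$ up to the top order in $u$), into $\ex(F(u))\cdot[F(u),L_1 u]$ modulo terms of order $>2k$. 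Matching the coefficient of $u^{2k}$ on both sides, comparing with the known formula for $[\tilde L_{2k},L_1]$, and using that $[\bar L_{2j},L_1]=R_{2j}M_{2j+1}$ for $j<k$ together with $[M_{2a+1},M_{2b+1}]$ from Corollary~\ref{mrelations}, isolates $[\bar L_{2k},L_1]$ as a scalar multiple of $M_{2k+1}$ plus a combination of $M_{2a+1}M_{2b+1}$ terms; those extra terms must cancel by a degree/grading argument (they live in root space $(2k{+}1)\delta$ but have the wrong ``shape'' — they are not in the span of the single super-letter $M_{2k+1}$, which by the Lyndon/PBW machinery of Section~1 is the only imaginary root vector candidate that survives, cf. Lemma~\ref{subquotientlemma} and Corollary~\ref{corol:realrootvectors}). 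Reading off the surviving scalar gives precisely the recursion displayed for $\theta R_{2k}$ in terms of $C_j$ of various series, i.e. the stated formula.

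Next, granted $[\bar L_{2k},L_1]=R_{2k}M_{2k+1}$, I would prove $D_{2k}=0$, hence $[\tilde L_{2k},\tilde L_{2n}]=0$. Expand $[\tilde L_{2k},\tilde L_{2N+1}]$ via $\tilde L_{2N+1}=\sum\theta a_{i+1}M_{2i+1}$-type expressions (Prop.~\ref{seriesrelations}(a), $A(u)\tilde L(u)=\sum\theta\tilde L_{2n+1}u^{2n+1}$), so that brackets of $\tilde L$'s with $\tilde L_{2N+1}$ reduce, using the exponential presentation, to brackets of the $\bar L_{2j}$'s with the $M$'s; then $[\bar L_{2j},L_1]=R_{2j}M_{2j+1}$ and Corollary~\ref{mrelations} make $[[\tilde L_{2k},\tilde L_{2N+1}],L_1]$ and $[[\tilde L_{2N+2},\tilde L_{2k-1}],L_1]$ computable as explicit series in the $M^2$'s, and the symmetry $a_n\leftrightarrow$ the $P_n,Q_n$ identities force $D_{2k}=0$. (The skeleton of this cancellation is already visible in Lemma~\ref{lemma:tildeandhat} and the lemma reducing to $D_{2k}$.) Finally, to extract the closed form $R_{4k}=0$, $R_{4k+2}=\tfrac1{2^{2k}}\tfrac{[2k+1]_q}{2k+1}$, I would solve the recursion directly: the $\ex$'s in the recursion are $\ex$ of an odd-in-$u^2$-structured series once one knows $R_{4k}=0$, so an induction (or a generating-function manipulation: recognize $\sum_n\tfrac1{2^{2n}}\tfrac{[2n+1]_q}{2n+1}u^{2n+1}$ as essentially $\tfrac12\arc$-type or $\log$-type series matching $\theta\sum \bar L_{2n}$) closes it; comparing with $a_n=\frac{(\frac12 q)^n-(-\frac12 q^{-1})^n}{\frac12(q+q^{-1})}$ from the start of Section~2.3 pins down the constants.

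The main obstacle I expect is the second paragraph: controlling the ``extra'' $M_{2a+1}M_{2b+1}$ terms when differentiating the exponential and extracting $[\bar L_{2k},L_1]$. This requires (i) that the lower $\bar L$'s genuinely commute among themselves and with $\tilde L(u)$ to the needed order — which is where the inductive hypothesis $[\tilde L_{2j},\tilde L_{2n}]=0$ for $j<k$ is essential and must be invoked carefully, since $\bar L_{2j}$ is only a \emph{polynomial} in the $\tilde L$'s and noncommutativity at higher order could spoil the naive $\tfrac{d}{du}\ex(F)=\ex(F)F'$ identity — and (ii) a clean argument that the image of $[\bar L_{2k},L_1]$ in $B^{(2k+1)\delta}(\mathbb V)$ has no component outside $\mathbb C\,M_{2k+1}$, for which I would lean on the root-vector/PBW description (Theorem in Section~1, plus $K_{\ge1}/K_{>1}$ being generated by the $L_n$) together with the comultiplication formulas of Corollary~\ref{corol:comulformula} to see that $[\bar L_{2k},L_1]$ is primitive in the subquotient and hence forced into the one-dimensional space of primitives in that degree. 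Everything else is, as the author notes, ``the same technique'' applied with more indices.
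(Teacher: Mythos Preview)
Your overall inductive scheme matches the paper's: run a simultaneous induction proving at level $k$ both $[\bar L_{2k},L_1]=R_{2k}M_{2k+1}$ and $[\tilde L_{2k},\tilde L_{2N}]=0$ (the latter via the reduction to $D_{2k}=0$). The use of the exponential presentation $\tilde L(u)=\ex(\theta\sum\bar L_{2n}u^{2n})$ is also the right move. However, there is a genuine gap in your key step.

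When you extract $[\bar L_{2k},L_1]$ from the coefficient of $u^{2k}$ in $[\tilde L(u),L_1]$, you do not carry out the computation; instead you propose to kill the residual terms by arguing that $[\bar L_{2k},L_1]$ is primitive in $K_{\ge1}/K_{>1}$ and that the primitive space in degree $(2k{+}1)\delta$ is one-dimensional (spanned by $M_{2k+1}$), or alternatively by invoking the PBW/root-vector description at that degree. Both of these are circular here: the fact that $M_{2k+1}$ is the unique root vector at $(2k{+}1)\delta$, and the computation of primitives in the subquotient, are established only in Section~3, and they rely on Corollary~\ref{corol:tilde}, which is itself a consequence of the present proposition. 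At this point in the paper you have no structural control on $B^{(2k+1)\delta}(\mathbb V)$ beyond what the induction gives you.

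The paper avoids this entirely by a direct algebraic manipulation. It factors $\ex(\theta\sum_{n\ge1}\bar L_{2n}u^{2n})=\ex(k{+}1)\cdot\prod_{n=1}^{k}\ex(\theta\bar L_{2n}u^{2n})$ (in the notation $\ex(k{+}1)=\ex(\theta\sum_{n\ge k+1}\bar L_{2n}u^{2n})$) and then moves each single-term exponential past $L_1$ via the exact identity $\ex(\theta\bar L_{2n}u^{2n})\,L_1=\big(\ex(\mathbb M_{2n})\bullet L_1\big)\,\ex(\theta\bar L_{2n}u^{2n})$, where $\mathbb M_{2n}=ad_{\theta\bar L_{2n}}u^{2n}$. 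Under the inductive hypothesis each $\mathbb M_{2n}$ (for $n\le k$) sends $L_1\mapsto \theta R_{2n}M_{2n+1}$ and more generally $M_{2j+1}\mapsto \theta R_{2n}M_{2n+2j+1}$, so $\ex(\sum_{n\le k}\mathbb M_{2n})\bullet L_1$ is an explicit series in the $M$'s. Cancelling the common right factor $\prod_{n\le k}\ex(\theta\bar L_{2n}u^{2n})$ and reading the $u^{2k+2}$ coefficient then gives $\theta[\bar L_{2k+2},L_1]$ as an explicit scalar times $M_{2k+3}$, with the scalar equal to the displayed recursion for $\theta R_{2k+2}$. No appeal to primitivity or PBW is needed; the ``extra'' terms you worried about never appear because the factoring is exact rather than perturbative.

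The same trick handles $D_{2k}=0$, and here a second point you miss becomes visible: the recursion defining $R_{2k}$ is not merely a byproduct of the first computation but is precisely what forces the relevant coefficients $\phi_{2i}$ (governing $\Phi_{2i}$ in the paper's notation) to vanish for $i\le k$, which is what gives $D_{2k}=0$. Your sketch treats the two halves of the induction as independent and appeals to ``symmetry'' for the $D_{2k}$ part; in fact the mechanism is that the same exponential-factoring computation reduces $D_{2k}$ to an expression whose vanishing is equivalent to the recursion already established for the $R_{2i}$.
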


\begin{proof}
We will prove by induction on $n$ that $[\bar{L}_{2n},L_1]=R_{2n}M_{2n+1}$  for some ${R_{2n}}$ and $[\tilde{L}_{2n},\tilde{L}_{2N}]=0$ for $\forall N\ge 0$.

For $n=1$, we have know in Section 1 that $[\bar{L}_2,L_1]=M_3$ which means $R_2=1$ and coincide with the formula of $R_{2k}$ when $k=1$, and $[\tilde{L}_2,\tilde{L}_{2N}]=0$ for $\forall N\ge 0$. Now suppose this propsition holds for $n\le k$, i.e we have known $[\bar{L}_{2n},L_1]=R_{2n}M_{2n+1}$ and $[\tilde{L}_{2n},\tilde{L}_{2N}]=0$  for $n\le k$ and $\forall N\ge0$, we consider $n=k+1$.

First we will prove $[\bar{L}_{2k+2},L_1]=R_{2k+2}M_{2k+3}$ for some $R_{2k+2}$.

By Prop.~\ref{propo:central} we know 
\begin{align}
\theta \sum_{n=0}^{\infty}[\tilde{L}_{2n}u^{2n},L_1]=&\frac{1}{2}\theta M_3\theta \sum_{n=0}^{\infty}\tilde{L}_{2n-2}u^{2n}\nonumber\\
&+\frac{1}{2}\theta \theta \sum_{n=0}^{\infty}\tilde{L}_{2n-2}u^{2n}M_3+\frac{1}{4}[\theta \sum_{n=0}^{\infty}\tilde{L}_{2n-4}u^{2n},M_5]\nonumber
\end{align}
i.e
\begin{align}
\theta \sum_{n=0}^{\infty}&[\tilde{L}_{2n}u^{2n},L_1]=\frac{1}{2}\theta u^2 M_3\theta \sum_{n=0}^{\infty}\tilde{L}_{2n-2}u^{2n-2} \nonumber\\
&+\frac{1}{2}\theta u^2 \theta \sum_{n=0}^{\infty}\tilde{L}_{2n-2}u^{2n-2}M_3+\frac{1}{4}u^4[\theta \sum_{n=0}^{\infty}\tilde{L}_{2n-4}u^{2n-4},M_5]\nonumber
\end{align}
i.e
\begin{align}
[\ex(\theta \sum_{n=1}^{\infty}&\bar{L}_{2n}u^{2n}),L_1]=\frac{1}{2}\theta u^2 M_3\ex(\theta \sum_{n=1}^{\infty}\bar{L}_{2n}u^{2n})\nonumber\\
&+\frac{1}{2}\theta u^2 \ex(\theta \sum_{n=1}^{\infty}\bar{L}_{2n}u^{2n})M_3+\frac{1}{4}u^4[\ex(\theta \sum_{n=1}^{\infty}\bar{L}_{2n}u^{2n}),M_5].\nonumber
\end{align}
Note that $\ex(\theta \sum_{n=1}^{\infty}\bar{L}_{2n}u^{2n})=\ex(\theta \sum_{n=2}^{\infty}\bar{L}_{2n}u^{2n})\ex(\theta \bar{L}_2 u^2)$, we denote $\ex(\theta \sum_{n=2}^{\infty}\bar{L}_{2n}u^{2n})$ by $\ex(2)$ and in general $\ex(\theta \sum_{n=k}^{\infty}\bar{L}_{2n}u^{2n})$ by $\ex(k)$ for convenience.

It is easy to get $\ex(\theta \bar{L}_{2n} u^2)L_1=\ex(ad_{\theta \bar{L}_{2n}}u^{2n})\bullet L_1 \ex(\theta \bar{L}_{2n} u^2)$. Denote $ad_{\theta \bar{L}_{2n}}u^{2n}$ by $\mathbb{M}_{2n}$ for further convenience, where $ad_{\theta \bar{L}_{2n}}$ means the operator $[\theta \bar{L}_{2n},\ \ \ ]$.

Now LHS=$[\ex(2),L_1]\ex(\theta \bar{L}_2u^2)+\ex(2)\left(\ex(\mathbb{M}_2)-1\right)\bullet L_1\ex(\theta \bar{L}_2u^2)$. Keep doing similar calculation under our induction hypothesis, we will get LHS=

$\left([\ex(k+1),L_1]+\ex(k+1)\left(\ex(\sum_{n=1}^{k}\mathbb{M}_{2n})-1\right)\bullet L_1\right)\prod_{n=1}^{k}\ex(\theta \bar{L}_{2n}u^{2n})$.

Smilarly, we can get RHS=
\begin{align}
&\left(\frac{1}{2}\theta u^2 M_3\ex(k+1)+\frac{1}{2}\theta u^2 \ex(k+1)\ex(\sum_{n=1}^{k}\mathbb{M}_{2n})\bullet M_3 \nonumber \right.\\
&\left.+\frac{1}{4}u^4\big([\ex(k+1),M_5]+\ex(k+1)(\ex(\sum_{n=1}^{k}\mathbb{M}_{2n})-1)\bullet M_5\big)\right) \nonumber\\
&\ \ \cdot \prod_{n=1}^{k}\ex(\theta \bar{L}_{2n}u^{2n}). \nonumber
\end{align}
Cancel the ending terms of both sides and compare the coefficent of $u^{2k+2}$, we will get
\begin{align}
&\theta[\bar{L}_{2k+2},L_1]+C_{2k+2}\left(\big(\ex(\sum_{n=1}^{k}\mathbb{M}_{2n})-1\big)\bullet L_1\right)  \nonumber\\
&=C_{2k}\left(\frac{1}{2}\theta  M_3\right)+C_{2k}\left( \frac{1}{2}\theta \ex(\sum_{n=1}^{k}\mathbb{M}_{2n})\bullet M_3\right)+C_{2k-2}\left(\frac{1}{4}\big(\ex(\sum_{n=1}^{k}\mathbb{M}_{2n})-1\big)\bullet M_5\right),\nonumber
\end{align}
Hence we have $\theta [\bar{L}_{2k+2},L_1]=R_{2k+2}  M_{2k+3}$, where 
\begin{align}
R_{2k+2}=&C_{2k}\left(\frac{1}{2}\theta \right)+C_{2k}\left(\frac{1}{2}\theta \ex(\sum_{n=1}^{k}\theta R_{2n}u^{2n})\right)\nonumber\\
&+C_{2k-2}\left(\frac{1}{4}\big(\ex(\sum_{n=1}^{k}\theta R_{2n}u^{2n})-1\big)\right)-C_{2k+2}\left(\ex(\sum_{n=1}^{k}\theta R_{2n}u^{2n})\right).\nonumber
\end{align}
Then we need to prove $D_{2k}=0$.

Consider the formal power series: $\theta \sum_{n=0}^{\infty}D_{2n}u^{2n}$. 
\begin{align}
\theta \sum_{n=0}^{\infty}D_{2n}u^{2n}=&-\left[[\theta \sum_{n=0}^{\infty}\tilde{L}_{2n}u^{2n},\tilde{L}_{2N+1}],L_1\right]+\left[\big[\tilde{L}_{2N+2},\frac{1}{2}\theta u^2L_1 \theta \sum_{n=0}^{\infty}\tilde{L}_{2n}u^{2n}\nonumber\right.\\
&\left.+\frac{1}{2}\theta u^2  \theta \sum_{n=0}^{\infty}\tilde{L}_{2n}u^{2n}L_1+\frac{1}{4}u^4[\theta \sum_{n=0}^{\infty}\tilde{L}_{2n}u^{2n},M_3]\big] ,L_1\right]\nonumber\\
=&-[A,L_1]+\left[[\tilde{L}_{2N+2},B] ,L_1\right].\nonumber
\end{align}

Now we calculate $A$ and $B$.

The calculation is similar to the previous. We have 

$A=A_{2k+2}\prod_{n=1}^{k}\ex(\theta \bar{L}_{2n}u^{2n}),$

where 

$A_{2k+2}=[\ex(k+1),\tilde{L}_{2N+1}]+\ex(k+1)\left(\ex(\sum_{n=1}^{k}\mathbb{M}_{2n})-1\right)\bullet \tilde{L}_{2N+1}.$

$B=B_{2k+2}\prod_{n=1}^{k}\ex(\theta \bar{L}_{2n}u^{2n}),$

where
\begin{align}
B_{2k+2}=&\frac{1}{2}\theta u^2L_1\ex(k+1)+\frac{1}{2}\theta u^2\ex(k+1)\ex(\sum_{n=1}^{k}\mathbb{M}_{2n})\bullet L_1 \nonumber\\
&+\frac{1}{4}u^4\left([\ex(k+1),M_3]+\ex(k+1)\big(\ex(\sum_{n=1}^{k}\mathbb{M}_{2n})-1\big)\bullet M_3\right) \nonumber
\end{align}
Then 
\begin{align}
\theta \sum_{n=0}^{\infty}D_{2n}&=-\left[A_{2k+2}\prod_{n=1}^{k}\ex(\theta \bar{L}_{2n}u^{2n}),L_1\right]+\left[[\tilde{L}_{2N+2},B_{2k+2}\prod_{n=1}^{k}\ex(\theta \overline{L}_{2n}u^{2n})] ,L_1\right]\nonumber\\
&=-\left[A_{2k+2}\prod_{n=1}^{k}\ex(\theta \bar{L}_{2n}u^{2n}),L_1\right]+\left[[\tilde{L}_{2N+2},B_{2k+2}]\prod_{n=1}^{k}\ex(\theta \bar{L}_{2n}u^{2n}) ,L_1\right]\nonumber\\
&=\left[(-A_{2k+2}+[\tilde{L}_{2N+2},B_{2k+2}])\prod_{n=1}^{k}\ex(\theta \bar{L}_{2n}u^{2n}),L_1\right].\nonumber
\end{align}
We need to prove that the coefficient of $u^{2k}$ in $\theta \sum_{n=0}^{\infty}D_{2n}u^{2n}$ is 0.

It is enough to prove $C_{2k}\left([\Phi \prod_{n=1}^{k}\ex(\theta \bar{L}_{2n}u^{2n}),L_1]\right)=0$, where
\begin{align}
 \Phi=&-\ex(\sum_{n=1}^{k}\mathbb{M}_{2n})\bullet \tilde{L}_{2N+1}+\left[\tilde{L}_{2N+2},\frac{1}{2}\theta u^2 L_1 \nonumber\right.\\
&\left.+\frac{1}{2}\theta u^2 \ex(\sum_{n=1}^{k}\mathbb{M}_{2n})\bullet L_1+\frac{1}{4}u^4 \big(\ex(\sum_{n=1}^{k}\mathbb{M}_{2n})-1\big)\bullet M_3\right].\nonumber
\end{align}

 We claim that $\Phi_{2i}:=C_{2i}(\Phi)=0$ for $i\le k$.  

We have $\Phi_{2i}=\phi_{2i}\left(\frac{1}{2}\theta M_{2i+1}\tilde{L}_{2N}+\frac{1}{2}\theta \tilde{L}_{2N}M_{2i+1}+\frac{1}{4}[\tilde{L}_{2N-2},M_{2i+3}]\right)$, where 
\begin{align}
\phi_{2i}=&C_{2i}\Big(-\ex(\sum_{n=1}^{k}\theta R_{2n}u^{2n})+\frac{1}{2}\theta u^2+\frac{1}{2}\theta u^2\ex(\sum_{n=1}^{k}\theta R_{2n}u^{2n})\nonumber\\
&+\frac{1}{4}u^4(\ex(\sum_{n=1}^{k}\theta R_{2n}u^{2n})-1)\Big).\nonumber
\end{align}
Then $\Phi_{2i}=0$ for $i\le k$ holds by this and the induction hypethesis of $R_{2i}$ for $i\le k$.

It is not difficult to get the general formula of $R_{2m}$, we omit the details.

\end{proof}

\begin{corol} \label{corol:tilde}

We have the following commuting relations in $B(\mathbb{V})$.

(a)  $[\tilde{L}_{2m},\tilde{L}_{2n}]=0=[\bar{L}_{2m},\bar{L}_{2n}]$.

(b)  $[M_{2m+1},M_{2n+1}]=0 $ if $m+n$ is odd, $[M_{2m+1},M_{2n+1}]=(-1)^{\frac{m-n}{2}}2M_{m+n+1}^2 $ if $m+n$ is even.

(c)  $[\bar{L}_{2m},M_{2n+1}]=R_{2m}M_{2m+2n+1}$, where $R_{4m}=0$ and $R_{4m+2}=\frac{1}{2^{2m}}\frac{[2m+1]_q}{2m+1}$.

By these relations, the only possible imaginary root vectors are $L_{4n}$ at $4n \delta$, $L_{4n+2}$ and $M_{2n+1}^2$ at $(4n+2)\delta$, $M_{2n+1}$ at $(2n+1)\delta$. 

\end{corol}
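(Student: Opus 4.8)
The plan is to obtain (a)--(c) by assembling facts already established and then to read off the list of imaginary root vectors from them. The relations $[\tilde L_{2m},\tilde L_{2n}]=0$ are precisely the content of Proposition~\ref{propo:someimaginaryrootvectorscomm}. To deduce $[\bar L_{2m},\bar L_{2n}]=0$ I would apply the formal logarithm to the defining identity $\theta\sum_{n\ge 0}\tilde L_{2n}u^{2n}=\ex\!\big(\theta\sum_{n\ge 1}\bar L_{2n}u^{2n}\big)$: since the $\tilde L_{2n}$ commute pairwise, all power series identities for $\ex$ and $\lo$ hold in the commutative subalgebra they generate, so each $\bar L_{2n}$ is a polynomial in $\tilde L_2,\dots,\tilde L_{2n}$ and hence the $\bar L_{2n}$ commute pairwise as well. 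Part (b) is Corollary~\ref{mrelations}.

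\textbf{Part (c).} The case $n=0$ is $[\bar L_{2m},L_1]=R_{2m}M_{2m+1}$, together with the values $R_{4k}=0$ and $R_{4k+2}=\tfrac{1}{2^{2k}}\tfrac{[2k+1]_q}{2k+1}$, from Proposition~\ref{propo:someimaginaryrootvectorscomm}. For $n\ge 1$ I would induct using $M_{2n+1}=[L_2,M_{2n-1}]$. Since $L_2=\tilde L_2+L_1^2$ and $\bar L_{2m}$ is a polynomial in the $\tilde L_{2j}$, part (a) and $[\tilde L_{2j},L_1^2]=0$ (Proposition~\ref{propo:central}(e)) give $[\bar L_{2m},L_2]=0$. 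As $2m\delta$ is imaginary, $\chi(2\delta,2m\delta)\chi(2m\delta,2\delta)=1$ and $\chi(2m\delta,2\delta)=1$, so the braided Jacobi identity (1.1) yields
\begin{align}
[\bar L_{2m},M_{2n+1}]
&=[\bar L_{2m},[L_2,M_{2n-1}]]\nonumber\\
&=[[\bar L_{2m},L_2],M_{2n-1}]+\chi(2m\delta,2\delta)[L_2,[\bar L_{2m},M_{2n-1}]]\nonumber\\
&=[L_2,R_{2m}M_{2m+2n-1}]=R_{2m}M_{2m+2n+1},\nonumber
\end{align}
the third step using the inductive hypothesis.

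\textbf{The list of imaginary root vectors.} By Corollary~\ref{corol:realrootvectors} the real root vectors are exactly the $X_n$ and $Y_n$ and the real root spaces have multiplicity one, so every other root vector has imaginary degree $k\delta$ with $k\ge 1$. By the PBW theorem of Section~\ref{sec:prelims} such a root vector is either a super-letter $[v]$ with $\deg v=k\delta$, or a power $[v]^2$ with $\deg v=\tfrac k2\delta$ and $o_{(k/2)\delta}=2$. Since $\chi(\delta,\delta)=-1$ we have $\chi(j\delta,j\delta)=(-1)^{j}$, so the second case occurs only when $\tfrac k2$ is odd, i.e. $k\equiv 2\pmod 4$, and then $[v]^2$ is a scalar multiple of $M_{k/2}^2$, equal to $M_{2n+1}^2$ when $k=4n+2$. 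For the super-letters I would argue that, modulo products and braided commutators of strictly smaller root-vector candidates, the span of those of imaginary degree $k\delta$ is at most one-dimensional: Proposition~\ref{propo:central}(a),(d) together with Lemma~\ref{lemma1}(b) express $L_{2k+1}$ and $L_{2k+1}^{\prime}$ through $L_1\tilde L_{2k}$, $\tilde L_{2k}L_1$ and commutators of lower degree, so no $L_{2k+1}$ is a new root vector, while the only iterated bracket surviving in odd imaginary degree is $M_{2n+1}=(\mathrm{ad}\,L_2)^n(L_1)$; in even degree $2m\delta$ one has $L_{2m}\equiv\tilde L_{2m}\equiv\bar L_{2m}$ modulo lower terms, and by (a) no bracket of these is new; and $[M_{2i+1},M_{2j+1}]$ is either $0$ or a scalar multiple of $M_{i+j+1}^2$ by (b), hence contributes no new super-letter. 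Combining this with the power-root-vector discussion, the only possible imaginary root vectors are $M_{2n+1}$ at $(2n+1)\delta$, $L_{4n}$ at $4n\delta$, and $L_{4n+2}$ and $M_{2n+1}^2$ at $(4n+2)\delta$.

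\textbf{Main obstacle.} I expect the hard part to be the reduction in the previous paragraph: one must check, degree by degree, that every iterated braided commutator of the $L_n$ collapses---modulo products and commutators of smaller root-vector candidates---onto this short list, so that $\hat L_{2n+1}=[L_{2n},L_1]$, the brackets $[L_a,L_b]$ and $[L_a^{\prime},L_b^{\prime}]$ in even degrees, and $[M_{2i+1},M_{2j+1}]$ all contribute nothing new at any $k\delta$; this is precisely what relations (a)--(c) and the $\hat L$/$L^{\prime}$ identities are designed to make possible. The complementary claim---that each listed element is a genuine nonzero root vector and that $\{L_{4n+2},M_{2n+1}^2\}$ is linearly independent at $(4n+2)\delta$---is not needed for this corollary and will be obtained later through the primitivity and comultiplication computations of Section~3.
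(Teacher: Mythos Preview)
Your proposal is correct and follows the same route as the paper, whose proof is a two–sentence pointer (``These relations hold immediately by Prop.~\ref{propo:someimaginaryrootvectorscomm} and Prop.~\ref{propo:central}. Then by induction on degree we can get these possible imaginary root vectors.''). In fact you supply noticeably more detail than the paper does: your explicit inductive argument for (c) via $M_{2n+1}=[L_2,M_{2n-1}]$, $[\bar L_{2m},L_2]=0$ and (1.1) is exactly the content the paper hides behind the word ``immediately'', and your identification of $L_2=\tilde L_2+L_1^2$ and the use of Proposition~\ref{propo:central}(e) are correct. Your reading of the final clause is also right: only the \emph{upper bound} on the set of imaginary root vectors is being claimed here, and that is what your degree induction addresses; the nonvanishing and independence are deferred to Section~3, as you note.

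One small comment on the root-vector paragraph: your reduction discusses $L_{2k+1}$, $\hat L_{2n+1}$, $[\tilde L_{2a},\tilde L_{2b}]$ and $[M_{2i+1},M_{2j+1}]$, but at a given imaginary degree there are in general more Lyndon super-letters than just $L_n$ and $M_{2n+1}$ (e.g.\ at $3\delta$ one also meets $[X_1,[L_1,Y_2]]$). The paper's one-line ``by induction on degree'' sweeps this under the rug as well; the point is that Shirshov decomposition together with Lemma~\ref{lemma1}(c), Proposition~\ref{propo:realandim} and the $L_n'$--$L_n$--$\hat L_n$ identities rewrite any imaginary super-letter as an iterated bracket of $L_j$'s plus PBW products, after which your relations (a)--(c) do the rest. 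If you want your write-up to be self-contained you might add a sentence to that effect, but the approach is the paper's.
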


\begin{proof}
These relations hold immediately by Prop.~\ref{propo:someimaginaryrootvectorscomm} and  Prop.~\ref{propo:central}. Then by induction on degree we can get these possible imaginary root vectors.
\end{proof}

\section{Root multiplicities of $B(\mathbb{V})$}

In this subsection, we will determine the multiplicities of all the roots and give a PBW basis of $B(\mathbb{V})$. The main tool is the subquotient $K_{\ge1}/K_{>1}$. In detail, the key step is finding the primitive element at certain degree in $K_{\ge 1}/K_{>1}$. At $U_{v}(\hat{sl_2})$ case, the Drinfeld generators defined by exponential are exactly these primitive elements.

\subsection{$(2n+1)\delta$}

By Corollary. ~\ref{corol:tilde}, we know the only possible root vector at $(2n+1)\delta$ is $M_{2n+1}$.

It is easy to get the following formulas in the subquotient.

$\Delta(L_2)=L_2 \otimes 1-\theta L_1\otimes L_1 +1\otimes L_2$

$\Delta(L_1)=L_1 \otimes 1+1\otimes L_1$,

$\Delta(L_1^2)=L_1^2 \otimes 1+1\otimes L_1^2$, 

$\Delta(M_3)=M_3\otimes 1 +2\theta L_1^2\otimes L_1-2\theta L_1\otimes L_1^2+1\otimes M_3$

$\Delta(M_5)=M_5\otimes 1-4\theta^2 L_1^3\otimes L_1^2+2\theta L_1^2\otimes M_3-2\theta M_3\otimes L_1^2 -4\theta^2 L_1^2\otimes L_1^3+1\otimes M_5$

$\cdots$

Let $\mathbb{O}_{2n+1}$ be the subalgebra generated by $\{M_{2i+1}, 0\le i\le n\}$.

In general, by an easy induction we can get 
\begin{align}
\Delta(M_{2n+1}) &\in M_{2n+1}\otimes 1+1\otimes M_{2n+1}+2\theta L_1^2\otimes M_{2n-1}-2\theta M_{2n-1}\otimes L_1^2 \nonumber\\
&+ \mathbb{O}_{2n-3}\otimes \mathbb{O}_{2n-3}.\nonumber
\end{align}
$M_{2n+1}$ is a root vector if and only if $M_{2n+1}$ is a linear combination of $L_1^{k_1}M_{3}^{k_2}M_{5}^{k_3}\cdots M_{2n-1}^{k_n}$.  The comultiplication of $M_{2n+1}$ has  term $2\theta L_1^2\otimes M_{2n-1}-2\theta M_{2n-1}\otimes L_1^2$ but the comultiplication of $L_1^2 M_{2n-1}$ can only have term
$L_1^2\otimes M_{2n-1}+ M_{2n-1}\otimes L_1^2$. Hence by induction we can see $M_{2n+1}$ must be a root vector when $q$ is not a root of unity.

\subsection{$(4n+2)\delta$ and $4n \delta$}

\subsubsection{$M_{2n+1}^2$}

Since $M_{2n+1}$ is a root vector and $\chi((2n+1)\delta, (2n+1)\delta)=-1$, we should consider $M_{2n+1}^2$.

Define $\overline{M_{2n+1}^{2}}$ by the following power series,

 $$\arc(2\theta \sum_{n=0}^{\infty}M_{2n+1}^2 u^{4n+2})=2\theta \sum_{n=0}^{\infty} \overline{M_{2n+1}^{2}}u^{4n+2}.$$

\begin{propo} $\overline{M_{2n+1}^{2}}$ is primitive in $K_{\ge 1}/K_{>1}$. The leading term of $[X_1, \overline{M_{2n+1}^{2}}]$ is $(-1)^n 2^{2n}\frac{[2n+1]_q}{2n+1}X_{4n+3}$, which means $\overline{M_{2n+1}^{2}}$ is not 0 in $B(\mathbb{V})$ when q is not a root of unity. \label{propo:2n+1square}
\end{propo}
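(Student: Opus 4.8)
### Proof proposal

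The plan is to establish the two assertions separately, but with a common engine: the comultiplication formulas recorded in Corollary~\ref{corol:comulformula} and the relations in the form of series in Proposition~\ref{seriesrelations}, transported through the arctangent. For primitivity, I would first observe that in the subquotient $K_{\ge 1}/K_{>1}$ the element $X(u)=\sum_{n\ge 0}2\theta M_{2n+1}^2u^{4n+2}$ is \emph{group-like-like} in the following sense: by Corollary~\ref{mrelations} the $M_{2n+1}^2$ are central in $K_{\ge 1}/K_{>1}$, and from the comultiplication of the $M_{2n+1}$ one computes that $\Delta$ applied to the generating series $X(u)$ has the shape $\Delta(X(u)) = X(u)\otimes 1 + 1\otimes X(u) + (\text{a ``tangent-addition'' correction built from } L_1^2)$. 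The precise statement I would aim for is that, writing $T(u)$ for the series $\tan$ of something, $X(u)$ satisfies a tangent addition law under $\Delta$; equivalently, $\arctan X(u)$ is \emph{additive}, i.e. $\Delta(\arctan X(u)) = \arctan X(u)\otimes 1 + 1\otimes \arctan X(u)$ modulo $K_{>1}$. Extracting the coefficient of $u^{4n+2}$ then gives exactly that $\overline{M_{2n+1}^2}$ is primitive. This is the conceptual heart of why the arctangent (rather than the logarithm used for the even $\tilde L$'s) is the right substitution here: the braiding sign $\chi((2n+1)\delta,(2n+1)\delta)=-1$ forces a $\tan$-type rather than $\exp$-type addition law.

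Concretely I would carry this out in the following steps. First, from the displayed formulas $\Delta(L_1^2)=L_1^2\otimes 1 + 1\otimes L_1^2$, $\Delta(M_3)=M_3\otimes 1+1\otimes M_3+2\theta L_1^2\otimes L_1-2\theta L_1\otimes L_1^2$, and the inductive formula
\begin{align}
\Delta(M_{2n+1}) &\in M_{2n+1}\otimes 1+1\otimes M_{2n+1}+2\theta L_1^2\otimes M_{2n-1}-2\theta M_{2n-1}\otimes L_1^2 \nonumber\\
&\quad + \mathbb{O}_{2n-3}\otimes\mathbb{O}_{2n-3},\nonumber
\end{align}
I would derive a closed generating-series identity for $\Delta$ applied to $2\theta\sum M_{2n+1}u^{2n+1}=A(u)/a$-type series, and then square/manipulate to get $\Delta(X(u))$ in terms of $X(u)\otimes 1$, $1\otimes X(u)$ and the central series $L_1^2$. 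The key algebraic fact to exploit is that centrality of the $M_{2m+1}^2$ lets these generating series be manipulated as if in a commutative (power series) ring inside $(K_{\ge 1}/K_{>1})\otimes(K_{\ge 1}/K_{>1})$, so that the formal identity $\arctan a + \arctan b = \arctan\frac{a+b}{1-ab}$ can be applied verbatim. Matching this with the $\Delta(X(u))$ formula produces additivity of $\arctan X(u)$, hence primitivity of each $\overline{M_{2n+1}^2}$.

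For the second assertion, that $\overline{M_{2n+1}^2}\neq 0$ with leading term $(-1)^n 2^{2n}\frac{[2n+1]_q}{2n+1}X_{4n+3}$ in $[X_1,\overline{M_{2n+1}^2}]$, I would argue via differential operators / brackets with $X_1$. Using Proposition~\ref{seriesrelations}(d), which expresses $A^2$ and $\frac14 B^2$ in terms of $X(\alpha u)$ and $X(\beta u)$, together with (e) relating $[A(u),L_1u]$ to $A^2+\frac14 B^2$, one converts the bracket $[X_1,-]$ applied to the series $X(u)$ into a series in the real root vectors $X_m$. Since $\arctan X(u) = X(u) - \frac{1}{3}X(u)^3+\cdots$ and $X(u)=2\theta M_1^2 u^2 + \cdots$ with leading coefficient involving $M_1=L_1$, the leading term of $[X_1,\overline{M_{2n+1}^2}]$ comes from the linear term $X(u)$ of the arctangent expansion, i.e. from $[X_1, M_{2n+1}^2]$ up to lower-order corrections; this I would compute using Proposition~\ref{propo:realandim}-type bracket formulas (or directly $\partial_i$'s), and the rational coefficient $(-1)^n 2^{2n}\frac{[2n+1]_q}{2n+1}$ should fall out from the explicit $a_n = \frac{(\tfrac12 q)^n-(-\tfrac12 q^{-1})^n}{\tfrac12(q+q^{-1})}$ appearing in the definitions of $A$, $B$. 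Since by Corollary~\ref{corol:realrootvectors} the $X_m$ are honest root vectors, hence linearly independent, the nonvanishing of this leading coefficient (clear when $q$ is not a root of unity, as $[2n+1]_q\neq 0$) forces $[X_1,\overline{M_{2n+1}^2}]\neq 0$, hence $\overline{M_{2n+1}^2}\neq 0$ in $B(\mathbb{V})$ by Proposition~1.6 (the differential-operator characterization of $0$).

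The main obstacle I anticipate is the first step: rigorously justifying that the formal arctangent manipulation is legitimate in the subquotient, i.e. controlling the error terms $\mathbb{O}_{2n-3}\otimes\mathbb{O}_{2n-3}$ in $\Delta(M_{2n+1})$ well enough to see they reorganize exactly into the tangent-addition correction and do not spoil additivity of $\arctan X(u)$. This requires a careful induction showing that modulo $K_{>1}$ the whole coalgebra structure on the subalgebra generated by the $M_{2m+1}$ (equivalently by $L_1$ and the $M_{2m+1}^2$) is governed by the single central series $X(u)$ and the primitive $L_1$ — essentially an identification of a Hopf subalgebra of $K_{\ge1}/K_{>1}$ with a ``quantum tangent'' coalgebra. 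Once that structural statement is in place, both the primitivity and the degree count are formal consequences of extracting coefficients.
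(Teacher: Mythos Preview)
Your overall strategy is correct and matches the paper's: both establish primitivity by showing that $\Delta(X(u))$ obeys the tangent addition law
\[
\Delta(X) = (X\otimes 1 + 1\otimes X)\cdot \frac{1}{1-X\otimes X},
\]
which is exactly the condition making $\arctan X(u)$ additive under $\Delta$. The difference lies in how this formula is obtained. You propose to build it up from the approximate formula $\Delta(M_{2n+1}) \in M_{2n+1}\otimes 1 + 1\otimes M_{2n+1} + 2\theta L_1^2\otimes M_{2n-1} - 2\theta M_{2n-1}\otimes L_1^2 + \mathbb{O}_{2n-3}\otimes\mathbb{O}_{2n-3}$, and you correctly flag the control of the $\mathbb{O}_{2n-3}\otimes\mathbb{O}_{2n-3}$ terms as the main obstacle. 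The paper bypasses this obstacle entirely: it introduces the auxiliary series $Y=\sqrt{2\theta}\sum_{n\ge 0} M_{2n+1}u^{2n+1}i^{n}$, checks $Y^2=X$ via Corollary~\ref{mrelations}, proves the \emph{exact} closed formula $\Delta(Y)=(Y\otimes 1+1\otimes Y)\,(1-iY\otimes Y)^{-1}$ by a direct induction, and then squares. This square-root trick is what resolves precisely the difficulty you anticipate, with no residual error terms to manage; equivalently, the paper phrases primitivity as $\frac{i+X}{i-X}$ being group-like via $\arctan x=\frac{i}{2}\log\frac{i+x}{i-x}$.

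For the leading term, your plan via Proposition~\ref{seriesrelations}(d),(e) is more circuitous than needed. The paper simply observes (from Proposition~\ref{propo1} and induction) that the leading term of $[X_1,M_{2n+1}^2]$ is $(-1)^n 2^{2n}X_{4n+3}$, so the leading term of $[X_1,X(u)]$ is $2\theta u^2/(1+4u^4)$ times the appropriate $X_m$, and then extracts the coefficient of $u^{4n+2}$ in $\arctan\bigl(2\theta u^2/(1+4u^4)\bigr)$ by elementary manipulation. Your route would reach the same place, but the direct computation is shorter.
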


\begin{proof}   We have $\arc(x)=\frac{i}{2}\lo(\frac{i+x}{i-x})$. Recall $X:=\sum_{n=0}^{\infty}2\theta M_{2n+1}^2 u^{4n+2}$. We have

 $$\lo(\frac{i+X}{i-X})=\frac{2}{i}\sum_{n=0}^{\infty}2\theta \overline{M_{2n+1}^2}u^{4n+2}.$$
Then it is equivalent to prove $\frac{i+X}{i-X}$ is group-like. $\frac{i+X}{i-X}=\frac{2}{1+iX}-1$, i.e. we want to prove $\Delta(\frac{2}{1+iX}-1)=(\frac{2}{1+iX}-1)\otimes (\frac{2}{1+iX}-1)$. Let $Y=\sqrt{2\theta}\sum_{n=0}^{\infty}M_{2n+1}u^{2n+1}i^n$. We have  $Y^2=X$  by Corollary ~\ref{mrelations}.

By induction it is not difficult to prove that
 $$\Delta(Y)=(Y\otimes 1+1\otimes Y)\frac{1}{1-iY\otimes Y},$$ in $K_{\ge 1}/K_{>1}$, we omit the detail. Then
\begin{align}
 \Delta(X)=\Delta(Y^2)&=(Y\otimes 1+1\otimes Y)\frac{1}{1-iY\otimes Y}\cdot (Y\otimes 1+1\otimes Y)\frac{1}{1-iY\otimes Y}\nonumber\\
&=(Y\otimes 1+1\otimes Y)^2\cdot \frac{1}{1+iY\otimes Y}\cdot \frac{1}{1-iY\otimes Y}\nonumber\\
&=(Y^2\otimes 1+1\otimes Y^2)\cdot \frac{1}{1-Y^2\otimes Y^2}\nonumber\\
&=(1\otimes X + X\otimes 1)\cdot \frac{1}{1-X\otimes X}. \nonumber
\end{align}

Then $\Delta(1+iX)=1+i(1\otimes X +X\otimes 1)\cdot \frac{1}{1-X\otimes X}=\frac{(1+iX)\otimes(1+iX)}{1-X\otimes X}$. Now $\Delta(\frac{2}{1+iX}-1)=(\frac{2}{1+iX}-1)\otimes (\frac{2}{1+iX}-1)$ holds immediately.

For the leading term of $[X_1, \overline{M_{2n+1}^{2}}]$, it is easy to get the leading term of $[X_1, 2\theta M_{2n+1}^2 u^{4n+2}]$ is $2\theta (-1)^n 2^{2n}X_{4n+3}u^{4n+2}$ by Prop. ~\ref{propo1}. Then the leading term of $[X_1, \overline{M_{2n+1}^{2}}]$ is

$$\frac{1}{2\theta}C_{4n+2}\left(\arc \big(2\theta \sum_{k=0}^{\infty}(-1)^k 2^{2k}u^{4k+2}\big)\right)X_{4n+3},$$
i.e
$$\frac{1}{2\theta}C_{4n+2}\left(\arc (2\theta u^2\frac{1}{1+4 u^4})\right)X_{4n+3}.$$
Then by some combinatoric techniques we can prove the leading term of $[X_1, \overline{M_{2n+1}^{2}}]$ is $(-1)^n 2^{2n}\frac{[2n+1]_q}{2n+1}X_{4n+3}$, we omit this detail.

\end{proof}

\begin{corol} $M_{2n+1}^2$ are root vectors when q is not a root of unity.\label{corol:mul}
\end{corol}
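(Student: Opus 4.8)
The plan is to deduce this directly from Proposition~\ref{propo:2n+1square} together with the general structure theory of the subquotient $K_{\ge 1}/K_{>1}$. The key point is that a root vector candidate $[w]$ fails to be a root vector precisely when it is a linear combination of ordered monomials in strictly smaller root vector candidates, and by Lyndon word theory this can be detected at the level of the subquotient and, in turn, via the comultiplication. So first I would record, from Proposition~\ref{propo:2n+1square}, that $\overline{M_{2n+1}^2}$ is primitive in $K_{\ge 1}/K_{>1}$ and is nonzero in $B(\mathbb{V})$ (the leading-term computation shows it is nonzero when $q$ is not a root of unity). Since $\overline{M_{2n+1}^2}$ is, by its defining power series $\arc(2\theta\sum M_{2k+1}^2 u^{4k+2})$, equal to $M_{2n+1}^2$ plus a polynomial in the lower $M_{2k+1}^2$ with $k<n$, the element $M_{2n+1}^2$ is nonzero as well.

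Next I would argue by induction on $n$. Suppose $M_{2k+1}^2$ is a root vector for all $k<n$; the lower-degree imaginary root vectors at degrees $< (2n+1)\delta$ have already been classified in Corollary~\ref{corol:tilde}. Assume for contradiction that $M_{2n+1}^2$ is \emph{not} a root vector at degree $(2n+1)\cdot 2\delta = (4n+2)\delta$; wait — I must be careful: $M_{2n+1}^2$ sits in degree $(4n+2)\delta$, and by Corollary~\ref{corol:tilde} the only possible imaginary root vectors there are $L_{4n+2}$ and $M_{2n+1}^2$. If $M_{2n+1}^2$ were not a root vector, it would be a linear combination of ordered products of strictly smaller root vector candidates of total degree $(4n+2)\delta$; projecting to $K_{\ge 1}/K_{>1}$ (which, by the Remark after Corollary~2.15, has the algebra structure of the subalgebra generated by the $L_n$, hence contains all the relevant imaginary root vectors), $M_{2n+1}^2$ — and therefore $\overline{M_{2n+1}^2}$, which differs from it by such ordered products in lower $M_{2k+1}^2$ — would lie in the subalgebra generated by root vector candidates strictly below it in the Lyndon order.

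Now I would invoke primitivity: $\overline{M_{2n+1}^2}$ is primitive in $K_{\ge 1}/K_{>1}$, whereas an ordered monomial $[v_k]^{m_k}\cdots[v_1]^{m_1}$ in strictly smaller imaginary root vector candidates, of total degree $(4n+2)\delta$ and involving at least two factors, has a nontrivial ``middle'' part in its comultiplication coming from the factors of smaller degree — more precisely, by Proposition~\ref{comultiplicationlyndon} and Corollary~\ref{corol:comulformula} its reduced coproduct lands in a sum of tensor products of pieces of strictly smaller imaginary degree, which are linearly independent (Corollary~\ref{corol:Lnonzero}) from the way $\overline{M_{2n+1}^2}=0\otimes\overline{M_{2n+1}^2}+\overline{M_{2n+1}^2}\otimes 0$ behaves. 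Matching coefficients in the graded pieces $B^{a\delta}(\mathbb{V})\otimes B^{b\delta}(\mathbb{V})$ with $a,b\ge 1$, $a+b=4n+2$, forces all such monomial contributions to vanish, so $\overline{M_{2n+1}^2}$ (hence $M_{2n+1}^2$) cannot be written this way — contradiction. Therefore $M_{2n+1}^2$ is a root vector.

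The main obstacle I expect is the bookkeeping in the last step: one must show that primitivity in $K_{\ge 1}/K_{>1}$ genuinely obstructs being a nontrivial ordered monomial, i.e. that no linear combination of the lower monomials can simultaneously be primitive and equal $\overline{M_{2n+1}^2}$ modulo the (single-factor) candidates $L_{4n+2}$ and lower powers. This requires knowing the coproducts of all competing candidates at degree $(4n+2)\delta$ precisely enough to separate them — exactly the role played by Corollary~\ref{corol:comulformula} and the explicit coproducts of $M_3, M_5,\dots$ recorded in Section~3.1 — and making sure the change of basis from $\{M_{2k+1}^2\}$ to $\{\overline{M_{2k+1}^2}\}$ (governed by the arctangent series, which is invertible over $\mathbb{C}$) does not disturb this. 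Once that linear-algebra separation is in place, the corollary follows formally.
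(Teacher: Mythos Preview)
Your overall strategy --- induction on $n$, primitivity of $\overline{M_{2n+1}^2}$ in $K_{\ge 1}/K_{>1}$, nonvanishing from Proposition~\ref{propo:2n+1square}, contradiction --- is exactly the paper's. What you are missing is the one reduction that makes the contradiction immediate rather than a bookkeeping exercise.

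The paper first observes that if $M_{2n+1}^2$ fails to be a root vector, then it is a linear combination of products of the \emph{squares} $M_{2k+1}^2$, $k\le n-1$, only. Two remarks on this. First, the Lyndon analysis: the root vectors whose word is lexicographically \emph{larger} than the word of $M_{2n+1}^2$ and which live in imaginary degree are precisely $M_{2k+1}$ and $M_{2k+1}^2$ for $k<n$; in particular $L_{4n+2}$ has \emph{smaller} word and plays no role here (and your ``strictly below'' should read ``strictly above'' throughout). Second, among the resulting PBW monomials $\prod_{k<n}M_{2k+1}^{c_k}$ one still has mixed terms like $M_1^7M_3$; these are killed because $M_{2n+1}^2$ is central (Corollary~\ref{mrelations}), so bracketing the putative expression with $M_1$ (and, if needed, with $M_3,\dots$) annihilates every summand with an odd exponent while the squares survive. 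This is the step that replaces your ``main obstacle'' paragraph.

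Once the reduction is made, pass via the invertible arctangent change of basis to $\overline{M_{2n+1}^2}=\Pi$, where $\Pi$ is a polynomial with no linear term in the $\overline{M_{2k+1}^2}$, $k<n$. By induction these are algebraically independent primitives, so they generate a free commutative polynomial Hopf algebra whose only primitives are linear in the generators; hence $\Pi=0$, so $\overline{M_{2n+1}^2}=0$, contradicting Proposition~\ref{propo:2n+1square}. Your vaguer plan to compare ``middle parts'' of coproducts of arbitrary monomials in the $M_{2k+1}$ runs into genuine difficulty precisely because the $M_{2k+1}$ are \emph{not} primitive (see the formulas at the start of Section~3.1); the reduction to the central square subalgebra is what sidesteps this.
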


\begin{proof} Suppose $M_{2n+1}^2$ is not a root vector. Then the only possibility is $M_{2n+1}^2$ is a linear combination of products of $M_{2k+1}^2$ in $B(\mathbb{V})$, where $k\le n-1$. Equivalently, $\overline{M_{2n+1}^2}$ is a linear combination of products of $\overline{M_{2k+1}^2}$ where $k\le n-1$, denoted this linear combination by $\Pi$. Then $\overline{M_{2n+1}^2}-\Pi$ is 0 in $B(\mathbb{V})$ and must be primitive in $K_{\ge1}/K_{>1}.$  Then it is easy to see the only possibility is $\Pi=0$, since  $\overline{M_{2k+1}^2}$ are primitive in $K_{\ge 1}/K_{>1}$ and are root vectors in $B(\mathbb{V})$ for $k\le n-1$.  Then $\overline{M_{2n+1}^2}$ is 0 in $B(\mathbb{V})$, contradiction.
\end{proof}

\subsubsection{$L_{2n}$}

\begin{lemma} \label{coefflemma}
The leading term of $[X_1, \bar{L}_{2n}]$ is $\frac{(q^{2n}+(-1)^n)(q^n+(-1)^{n+1})}{nq^{n-1}(q^2-1)}X_{2n+1}$.
\end{lemma}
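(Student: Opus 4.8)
The plan is to extract the leading coefficient from the generating-function identities of Proposition~\ref{propo:someimaginaryrootvectorscomm} and Proposition~\ref{seriesrelations}. Recall that $\bar{L}_{2n}$ is defined by $\theta\sum_{n\ge 0}\tilde{L}_{2n}u^{2n}=\ex(\theta\sum_{n\ge 1}\bar{L}_{2n}u^{2n})$, and that by Proposition~\ref{propo:someimaginaryrootvectorscomm} we have $[\bar{L}_{2n},L_1]=R_{2n}M_{2n+1}$ with $R_{4k}=0$ and $R_{4k+2}=\tfrac{1}{2^{2k}}\tfrac{[2k+1]_q}{2k+1}$. So the first step is to reduce the computation of the leading term of $[X_1,\bar{L}_{2n}]$ to something expressed through the $R_{2m}$. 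The natural route is to apply $[X_1,\ -\ ]$ to the defining exponential identity and use (2.1a)--(2.1b), i.e. $[X_1,L_2]=(q+2)X_3-\theta L_1X_2$ and $[X_1,L_2']=qX_3-\theta L_1 X_2$, together with the fact that $L_1^2$ is central in the subalgebra generated by the $L_n$. Since $X_{2n+1}$ is the biggest super-letter in its degree (Corollary~\ref{corol:realrootvectors}), "leading term" means the coefficient of $X_{2n+1}$ when all other terms are written with lower super-letters to the right; these lower terms are irrelevant for the claim.

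Concretely, I would compute the leading term of $[X_1,\tilde{L}_{2n}]$ first. From Proposition~\ref{propo:central}, $[\tilde{L}_{2n},L_1]$ is built from $M_3,M_5,\dots$, and from Proposition~\ref{propo:realandim} (or directly from Proposition~\ref{seriesrelations}(a)) one reads off that the leading term of $[X_1,\tilde{L}_{2n}]$ is governed by the series $A(u)$, respectively by the numbers $a_{n+1}$ with $a_n=\tfrac{(\tfrac12 q)^n-(-\tfrac12 q^{-1})^n}{\tfrac12(q+q^{-1})}$. Then I would transfer this to $\bar{L}_{2n}$ via the exponential relation: writing $F(u)=\theta\sum\tilde{L}_{2n}u^{2n}$ and $G(u)=\theta\sum\bar{L}_{2n}u^{2n}$ with $F=\ex(G)$, the leading-coefficient generating functions are related by the same exponential at the level of formal power series in $u$ (after replacing each super-letter bracket by its leading scalar times $X_{2n+1}$, and using that $[X_1,\ -\ ]$ is a skew derivation so its action on a product produces, to leading order, a sum). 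Thus if $f(u)=\sum f_{2n}u^{2n}$ records the leading coefficients of $[X_1,\tilde{L}_{2n}]$ and $g(u)$ those of $[X_1,\bar{L}_{2n}]$, then $f=g\cdot\ex(\text{(scalar part of }G))$ up to the bookkeeping of $X$-indices, and inverting gives $g$ in closed form.

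The hard part will be the bookkeeping of which term is genuinely "leading" through the skew-derivation expansion of $[X_1,-]$ applied to the exponential — in a product $\tilde L_{2i}\tilde L_{2j}\cdots$ the derivation hits each factor and one must check that the contributions to the coefficient of $X_{2n+1}$ add up coherently rather than mixing with terms of the form $L_a X_b$ with $b<2n+1$, which are discarded. Once that is under control, the final identity is a matter of recognizing the resulting rational function of $q$: one expects $g(u)=\lo\big(1+(\text{something in }q)\,u^2+\cdots\big)$-type behaviour, and extracting $C_{2n}$ gives precisely $\tfrac{(q^{2n}+(-1)^n)(q^n+(-1)^{n+1})}{nq^{n-1}(q^2-1)}$. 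I would verify the closed form by checking it against the known low-degree relations: for $n=1$, $\bar L_2=\tilde L_2$ and $[X_1,L_2]=(q+2)X_3-\theta L_1X_2$ together with $[X_1,L_2']=qX_3-\theta L_1X_2$ give $[X_1,\tilde L_2]=(q+1)X_3-\theta L_1X_2$, and indeed $\tfrac{(q^2-1)(q+1)}{1\cdot q^0\cdot(q^2-1)}=q+1$, confirming the formula; a second check at $n=2$ using (2.2a) and the definition of $\bar L_4$ pins down the signs. Finally, observing that $q^{2n}+(-1)^n$ and $q^n+(-1)^{n+1}$ are nonzero when $q$ is not a root of unity shows the leading coefficient is nonzero, which is the point of the lemma for the subsequent multiplicity argument.
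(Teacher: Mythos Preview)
Your overall shape is the paper's: compute the leading coefficient of $[X_1,\tilde L_{2n}]$ first, then transfer through the logarithm defining $\bar L_{2n}$. But two ingredients in your write-up are off and would not let you close the argument.

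First, the detour through $R_{2m}$ and the numbers $a_{n+1}$ from Proposition~\ref{seriesrelations} is a red herring. Those control the $M$-series and the relations among imaginary vectors, not the leading term of $[X_1,\tilde L_{2n}]$. The paper simply reads the leading coefficient $c_{2n}$ off Proposition~\ref{propo:realandim}: from $[X_1,L_{2n}]$ and $[X_1,L_{2n}']$ one gets $c_{2n}=\tfrac12(P_{2n-1}+q^{2n-1})=q^{2n-2}\frac{1-(-q^{-2})^n}{1+q^{-2}}+q^{2n-1}$.

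Second, and more importantly, your transfer relation ``$f=g\cdot\exp(\text{scalar part of }G)$'' is not the right identity, and the way you phrase the hard part (``the derivation hits each factor and one must check the contributions add up'') is not how the leading term actually behaves. The mechanism is different: because $\chi(\alpha_1,\delta)=1$, the map $[X_1,-]$ is a genuine derivation on imaginary-degree products; in $[X_1,\tilde L_{2i_1}\cdots\tilde L_{2i_k}]$ only the term where $[X_1,-]$ hits the \emph{leftmost} factor contributes to the $X_{2n+1}$-coefficient, and then the resulting $X_{2i_1+1}$ is pushed through the remaining $\tilde L$'s. The point you are missing is that the leading coefficient of $[X_{2k+1},\tilde L_{2j}]$ (as a multiple of $X_{2k+2j+1}$) equals $c_{2j}$ \emph{independently of $k$}, because $X_{2k+1}=[X_1,(L_1^2)^k]$ and $L_1^2$ is central among the $L_n$. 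This yields the clean multiplicativity
\[
\text{(leading coeff. of }[X_1,\tilde L_{2i_1}\cdots\tilde L_{2i_k}]\text{)}=c_{2i_1}\cdots c_{2i_k},
\]
and hence, since $\theta\bar L_{2n}$ is the $u^{2n}$-coefficient of $\log\bigl(1+\sum_{m\ge1}\theta\tilde L_{2m}u^{2m}\bigr)$, the leading coefficient of $[X_1,\bar L_{2n}]$ is
\[
\theta^{-1}\,C_{2n}\Bigl(\log\bigl(1+\textstyle\sum_{m\ge1}\theta\,c_{2m}u^{2m}\bigr)\Bigr).
\]
This is exactly the formula the paper writes down; summing the geometric pieces of $c_{2m}$ into a rational function of $u^2$ and expanding the logarithm then produces the stated closed form. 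Your $n=1$ check is fine; with the correct multiplicativity your $n=2$ check would go through as well.
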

\begin{proof}
We have $\lo(1+\theta \sum_{n=1}^{\infty}\tilde{L}_{2n}u^{2n})={\theta \sum_{n=1}^{\infty}\bar{L}_{2n}u^{2n}}.$ By Prop. ~\ref{propo:realandim}, we get the leading term of $[X_1, \theta \tilde{L}_{2n}u^{2n}]$ is 
\begin{align}
&\theta \left(q^{2n-2}\frac{1-(-q^{-2})^n}{1+q^{-2}}+q^{2n-1}\right)X_{2n+1}u^{2n} \nonumber\\
&=\frac{\theta}{1+q^{-2}}\sum_{n=1}^{\infty}\left((q^{-1}+q^{-2}+q^{-3})(q^2 u^2)^n-q^{-2}(-u^2)^n\right)X_{2n+1}. \nonumber
\end{align}
Then we have the leading term of $[X_1, \bar{L}_{2n}]$ is
$$\theta^{-1}C_{2n}\left(\lo \Big(1+\frac{\theta}{1+q^{-2}}\sum_{n=1}^{\infty}\big((q^{-1}+q^{-2}+q^{-3})(q^2 u^2)^n-q^{-2}(-u^2)^n\big)\Big)\right)X_{2n+1},$$
i.e
$$\theta^{-1}C_{2n}\left(\lo \Big(1+\frac{\theta}{1+q^{-2}}\big((q^{-1}+q^{-2}+q^{-3})q^2 u^2\frac{1}{1-q^2 u^2}-q^{-2}(-u^2)\frac{1}{1+u^2} \big)\Big)\right)X_{2n+1}.$$
Then by some combinatoric techniques we have the leading term of $[X_1, \bar{L}_{2n}]$ is $\frac{(q^{2n}+(-1)^n)(q^n+(-1)^{n+1})}{nq^{n-1}(q^2-1)}X_{2n+1}$ and we omit this detail.

\end{proof}

Define $\mathring{L}_{4n}$ by the following, recall  $\alpha=\sqrt{\frac{iq}{2}}$, $\beta=\sqrt{\frac{iq^{-1}}{2}}$,

\begin{align}
\sum_{n=1}^{\infty}2\theta \mathring{L}_{4n}u^{4n}=&\sum_{n=1}^{\infty}2\theta \bar{L}_{4n}u^{4n} \nonumber\\
&+\frac{1}{2}log\left(1+\left(\sum_{n=0}^{\infty}2\theta M_{2n+1}^2 (\alpha u )^{4n+2}\right)^2\right)\nonumber\\
&+\frac{1}{2}log\left(1+\left(\sum_{n=0}^{\infty}2\theta M_{2n+1}^2 (\beta u )^{4n+2}\right)^2\right).\nonumber
\end{align}

\begin{propo} $\mathring{L}_{4n}$ is primitive in the subquotient $K_{\ge 1}/K_{>1}$. The leading term of $[X_1, \mathring{L}_{4n}]$ is $\frac{[4n]_q}{4n}X_{4n+1}$, which means $\mathring{L}_{4n}$ is not 0 in $B(\mathbb{V})$ when $q$ is not a root of unity. \label{propo:4n}
\end{propo}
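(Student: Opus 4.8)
The plan is to follow the same two-part template already used for $\overline{M_{2n+1}^2}$ in Prop.~\ref{propo:2n+1square}: first establish that $\mathring{L}_{4n}$ is primitive in $K_{\ge 1}/K_{>1}$, and then compute the leading term of $[X_1,\mathring{L}_{4n}]$ to conclude non-vanishing in $B(\mathbb{V})$. For primitivity, I would work with the generating series. Set $\mathcal{L}(u)=\theta\sum_{n\ge 0}\tilde L_{2n}u^{2n}$ and recall $\tilde L(u)$, $X(u)$ from Section 2.3, together with the comultiplication formula for $L_n$ in Corollary~\ref{corol:comulformula}. The even part of $\Delta$ applied to $\mathcal{L}(u)$ should, after the substitution organizing the $\delta$-degrees into powers of $u^2$, factor as a product of two copies of $\mathcal{L}$ twisted by the odd generating series $A(u)$, exactly as in Prop.~\ref{seriesrelations}(a); more precisely I expect a relation of the shape $\Delta(\mathcal{L}(u))$ expressed through $\mathcal{L}(u)\otimes\mathcal{L}(u)$ corrected by the non-group-like defect coming from the $M_{2k+1}$'s. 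Taking $\log$ turns the multiplicative defect into an additive one, and the point of the definition of $\mathring L_{4n}$ is precisely that the two correction terms $\tfrac12\log(1+(\cdots(\alpha u))^2)+\tfrac12\log(1+(\cdots(\beta u))^2)$ are designed to cancel that additive defect — here one uses Prop.~\ref{seriesrelations}(d), which writes $A^2$ and $\tfrac14 B^2$ as linear combinations of $X(\alpha u)$ and $X(\beta u)$, so that the defect in $\log\mathcal{L}$ is exactly $-\tfrac12\log(1+X(\alpha u)^{?})-\tfrac12\log(1+X(\beta u)^{?})$ up to the scalars $q/(\theta' i)$, $q^{-1}/(\theta' i)$. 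Hence $\theta\sum 2\mathring L_{4n}u^{4n}$ is primitive, i.e. $\Delta(\mathring L_{4n})=\mathring L_{4n}\otimes 1+1\otimes\mathring L_{4n}$ in $K_{\ge1}/K_{>1}$ for every $n$.

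For the leading-term computation I would combine Lemma~\ref{coefflemma} with the leading term of $[X_1,\overline{M_{2n+1}^2}]$ from Prop.~\ref{propo:2n+1square}. Applying $[X_1,-\,]$ to the defining series for $\mathring L_{4n}$ and extracting the $X_{\ast}$-leading part, the contribution of $\sum 2\theta\bar L_{4n}u^{4n}$ has leading coefficient governed by $\frac{(q^{4n}+1)(q^{2n}-1)}{2n q^{2n-1}(q^2-1)}$ (the $n\mapsto 2n$ specialization of Lemma~\ref{coefflemma}), while the two $\log$-correction series contribute leading terms built from $\arc(2\theta u^2/(1+4u^4))$ evaluated at $u\mapsto\alpha u$, $u\mapsto\beta u$. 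Assembling these rational generating functions and extracting $C_{4n}$, the claim is that everything collapses to $\frac{[4n]_q}{4n}X_{4n+1}$; this is a finite rational-function identity in $q$, which I would verify by recognizing $\frac{[4n]_q}{4n}=\frac{q^{4n}-q^{-4n}}{4n(q-q^{-1})}$ as the coefficient of $u^{4n}$ in $\frac{1}{4}\log\!\big(\frac{1-q^{-4}u^{4}\cdot(\text{stuff})}{\cdots}\big)$-type expressions, matching term by term. Since $[4n]_q\ne 0$ when $q$ is not a root of unity, the leading coefficient is nonzero, so $[X_1,\mathring L_{4n}]\ne 0$, whence $\mathring L_{4n}\ne 0$ in $B(\mathbb{V})$ by Prop.~1.10.

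The main obstacle I anticipate is the bookkeeping in the primitivity step: tracking how the mixed terms $L_{2n-1}'\otimes L_1$, $L_{2n-2}\otimes L_2,\dots$ in $\Delta(L_{2n})$ reorganize under the $\alpha,\beta$-substitutions and interact with the odd series $A$, $B$, and in particular checking that the scalars $q/(\theta' i)$ and $q^{-1}/(\theta' i)$ from Prop.~\ref{seriesrelations}(d) conspire with the square roots $\alpha=\sqrt{iq/2}$, $\beta=\sqrt{iq^{-1}/2}$ to make the defect of $\log\mathcal{L}$ match exactly the prescribed correction. The arctangent-versus-logarithm identity $\arc(x)=\tfrac{i}{2}\log\frac{i+x}{i-x}$ used in Prop.~\ref{propo:2n+1square} is the bridge that lets the $X(\alpha u)$, $X(\beta u)$ pieces — which naturally appear inside $\arc$ via $\overline{M_{2n+1}^2}$ — be rewritten as the $\log(1+(\cdots)^2)$ terms in the definition of $\mathring L_{4n}$; making that bridge precise is the technical heart of the argument. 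The leading-term identity, by contrast, is a routine (if slightly intricate) generating-function manipulation once the relevant series are in hand.
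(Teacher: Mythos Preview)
Your high-level plan matches the paper's: compute $\Delta(\tilde L(u))$ in the subquotient via Corollary~\ref{corol:comulformula}, factor it using Prop.~\ref{seriesrelations}, take logarithms, and show that the two $\tfrac12\log(1+X(\cdot)^2)$ terms cancel the non-primitive defect; then handle the leading term by combining Lemma~\ref{coefflemma} with the known leading term for $M_{2n+1}^2$. So the strategy is sound.

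There is, however, a genuine missing ingredient in your primitivity argument. Working with $\Delta(\tilde L(u))$ alone gives you information about all $\bar L_{2n}$, not just $\bar L_{4n}$, and the correction series $\tfrac12\log(1+X(\alpha u)^2)+\tfrac12\log(1+X(\beta u)^2)$ lives purely in $u^{4k}$-degree. You must first isolate the $u^{4n}$-part of $\bar L(u)=\log\tilde L(u)$. The paper does this by the substitution trick $\tilde L(u)\tilde L(iu)=\exp\big(2\sum_{k\ge1}\theta\bar L_{4k}u^{4k}\big)$, so that $\Delta\big(\sum 2\theta\bar L_{4n}u^{4n}\big)=\log\big(*(u)\,*(iu)\big)$. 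The second subtlety you do not mention is that the factorization of $*(u)=\Delta(\tilde L(u))$ is two-sided: one has both $*(u)=\#(u)\cdot\exp(\bar L(u)\otimes1+1\otimes\bar L(u))$ and $*(u)=\exp(\cdots)\cdot\#'(u)$, with $\#'$ built from $A',B'$. It is only after combining these as $*(u)*(iu)=\#(u)\exp(\cdots)\#'(iu)$ and using $A'(iu)=iA(u)$, $B'(iu)=i^{-1}B(u)$ that the defect collapses to the clean form $\#(u)\#'(iu)=(1-X(\alpha u)\otimes X(\alpha u))(1-X(\beta u)\otimes X(\beta u))$, which is exactly what is killed by the $\log(1+X^2)$ corrections together with the formula $\Delta(1+X^2)=\frac{(1+X^2\otimes1)(1+1\otimes X^2)}{(1-X\otimes X)^2}$ already obtained in Prop.~\ref{propo:2n+1square}. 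Without the $u\mapsto iu$ pairing and the left/right factorizations, the defect does not visibly match your correction terms.

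A minor point: for the leading-term step, the correction series are $\tfrac12\log(1+X^2)$, not $\arc(X)$, so you should feed in the leading term of $[X_1,M_{2n+1}^2]$ (namely $(-1)^n2^{2n}X_{4n+3}$) directly rather than routing through $\arc$; the paper does exactly this and then reduces to a coefficient extraction from $\tfrac12\log\big(1+(2\theta(\gamma u)^2/(1+4\gamma^4u^4))^2\big)$ for $\gamma=\alpha,\beta$. Your outline for this part is otherwise correct.
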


\begin{proof}
Recall $\tilde{L}(u):=\sum_{n=0}^{\infty}\theta \tilde{L}_{2n}u^{2n}$,  let $\bar{L}(u):=\sum_{n=1}^{\infty}\theta \bar{L}_{2n}u^{2n}$, i.e. we have $\tilde{L}(u)=\ex(\bar{L}(u))$.

By Corollary ~\ref{corol:comulformula}, in the subquotient, we have\\

 $\Delta(\tilde{L}_{2n})=$

$$\tilde{L}_{2n}\otimes 1-\theta \tilde{L}_{2n-1}\otimes \tilde{L}_{1}+\theta \tilde{L}_{2n-2}\otimes \tilde{L}_{2}-\cdot\cdot\cdot +\theta \tilde{L}_{2}\otimes \tilde{L}_{2n-2}-\theta \tilde{L}_{1}\otimes \tilde{L}_{2n-1}+1\otimes \tilde{L}_{2n}
$$ 

$$+\theta \frac{1}{2}\hat{L}_{2n-2}\otimes \frac{1}{2}\hat{L}_{2}+\theta \frac{1}{2}\hat{L}_{2n-3}\otimes \frac{1}{2}\hat{L}_{3}+\cdot\cdot\cdot +\theta \frac{1}{2}\hat{L}_{3}\otimes \frac{1}{2}\hat{L}_{2n-3}+\theta \frac{1}{2}\hat{L}_{2}\otimes \frac{1}{2}\hat{L}_{2n-2}$$\\
i.e.\\

$\Delta(\tilde{L}(u))=$

$$\sum_{n=0}^{\infty}\theta \tilde{L}_{2n}u^{2n}\otimes \sum_{n=0}^{\infty}\theta \tilde{L}_{2n}u^{2n}-\sum_{n=0}^{\infty}\theta \tilde{L}_{2n+1}u^{2n+1}\otimes \sum_{n=0}^{\infty}\theta \tilde{L}_{2n+1}u^{2n+1}$$

$$+\sum_{n=1}^{\infty}\frac{1}{2}\theta \hat{L}_{2n}u^{2n}\otimes \sum_{n=1}^{\infty}\frac{1}{2}\theta \hat{L}_{2n}u^{2n}+\sum_{n=1}^{\infty}\frac{1}{2}\theta \hat{L}_{2n+1}u^{2n+1}\otimes \sum_{n=1}^{\infty}\frac{1}{2}\theta \hat{L}_{2n+1}u^{2n+1}$$

$:=*(u)$.

We have $\Delta(\tilde{L}(u))=*(u)$, $\Delta(\tilde{L}(iu))=*(iu)$, so $\Delta(\sum_{n=1}^{\infty}2\theta \bar{L}_{4n}u^{4n})=\lo(*(u)*(iu))$.

By Prop. ~\ref{seriesrelations} (a), (b) and (e) it is easy to get 
\begin{align}
*(u)=&\left(1-A\otimes A+\frac{1}{2}B\otimes \frac{1}{2}B \nonumber \right.\\
&\left.+\big(\theta^{-1}(\frac{1}{4}B^2+A^2)+A\cdot \frac{1}{2}B\big)\otimes \big(\theta^{-1}(\frac{1}{4}B^2+A^2)+A\cdot \frac{1}{2}B\big)\right)\nonumber\\
&\cdot \ex\left(\bar{L}(u)\otimes 1+1\otimes \bar{L}(u)\right)\nonumber\\
&:=\#(u)\cdot \ex\left(\bar{L}(u)\otimes 1+1\otimes \bar{L}(u)\right).\nonumber
\end{align}
and 
\begin{align}
*(u)=&\ex \left(\bar{L}(u)\otimes 1+1\otimes \bar{L}(u)\right)\cdot \left(1-A'\otimes A'+\frac{1}{2}B'\otimes \frac{1}{2}B' \nonumber  \right.\\
&\left.+\big(\theta^{-1}(\frac{1}{4}B'^2+A'^2)+A'\cdot \frac{1}{2}B'\big)\otimes \big(\theta^{-1}(\frac{1}{4}B'^2+A'^2)+A'\cdot \frac{1}{2}B'\big)\right) \nonumber\\
&:=\ex \left(\bar{L}(u)\otimes 1+1\otimes \bar{L}(u)\right)\cdot \#'(u).\nonumber
\end{align}
So 
\begin{align}
*(u)\cdot *(iu)&=\#(u)\cdot \ex \left(\bar{L}(u)\otimes 1+1\otimes \bar{L}(u)\right)\cdot \ex \left(\bar{L}(iu)\otimes 1+1\otimes \bar{L}(iu)\right)\cdot \#'(iu) \nonumber\\
&=\#(u)\cdot \ex \left(\sum_{n=1}^{\infty}2\theta \bar{L}_{4n}u^{4n}\otimes 1+1\otimes \sum_{n=1}^{\infty}2\theta \bar{L}_{4n}u^{4n}\right)\cdot \#'(iu), \nonumber
\end{align}
then 
$$\lo(*(u)*(iu))=\sum_{n=1}^{\infty}2\theta \bar{L}_{4n}u^{4n}\otimes 1 + 1\otimes \sum_{n=1}^{\infty}2\theta \bar{L}_{4n}u^{4n}+\lo(\#(u)\#'(iu)),$$ since $\bar{L}_{4n}$ commute with all $M_{2m+1}$ by Corollary ~\ref{corol:tilde}.                   

Recall that $A'(iu)=iA(u)$, $B'(iu)=i^{-1}B(u)$. Then $\lo (\#(u)\#'(iu))$ is all about A and B and their products. 

We have 
\begin{align}
 \#(u)\#'(iu)=&\left(1-A\otimes A+\frac{1}{2}B\otimes \frac{1}{2}B+\big(\theta^{-1}(\frac{1}{4}B^2+A^2)+A\cdot \frac{1}{2}B\big)\nonumber\right.\\
&\left.\otimes \big(\theta^{-1}(\frac{1}{4}B^2+A^2)+A\cdot \frac{1}{2}B\big)\right)\nonumber\\
&\cdot \left(1+A\otimes A-\frac{1}{2}B\otimes \frac{1}{2}B+\big(-\theta^{-1}(\frac{1}{4}B^2+A^2)+A\cdot \frac{1}{2}B\big)\nonumber\right.\\
&\left.\otimes \big(-\theta^{-1}(\frac{1}{4}B^2+A^2)+A\cdot \frac{1}{2}B\big)\right).\nonumber
\end{align}
Omit a long but purely elementry calculation using Prop. ~\ref{seriesrelations} (c), (d), we get $$\#(u)\#'(iu)=\left(1-X(\alpha u)\otimes X(\alpha u)\right)\cdot\left(1-X(\beta u)\otimes X(\beta u)\right).$$

Recall that by definition we have 
\begin{align}
&2\cdot \sum_{n=1}^{\infty}2\theta \mathring{L}_{4n}u^{4n}\nonumber\\
&=2\cdot(\sum_{n=1}^{\infty}2\theta \bar{L}_{4n}u^{4n})+\lo \left(1+X^2 (\alpha u)\right)+\lo \left(1+X^2 (\beta u)\right).\nonumber
\end{align}
We have known $\Delta(X)=(X\otimes 1+1\otimes X)\cdot \frac{1}{1-X\otimes X}$, then $\Delta(1+X^2)=\frac{(1+X^2\otimes 1)(1+1\otimes X^2)}{(1-X\otimes X)^2}$.

Now 
\begin{align}
\Delta(2\cdot \sum_{n=1}^{\infty}2\theta \mathring{L}_{4n}u^{4n})=&2\sum_{n=1}^{\infty}2\theta \bar{L}_{4n}u^{4n}\otimes 1 + 1\otimes 2\sum_{n=1}^{\infty}2\theta \bar{L}_{4n}u^{4n}+2\lo(\#(u)\#'(iu))\nonumber\\
&+\lo \left(\frac{(1+X^2(\alpha u)\otimes 1)(1+1\otimes X^2(\alpha u))}{(1-X(\alpha u)\otimes X(\alpha u))^2}\right)\nonumber\\
&+\lo \left(\frac{(1+X^2(\beta u)\otimes 1)(1+1\otimes X^2(\beta u))}{(1-X(\beta u)\otimes X(\beta u))^2}\right)\nonumber\\
=&\left(2\sum_{n=1}^{\infty}2\theta \bar{L}_{4n}u^{4n}+\lo \left(1+X^2 \left(\alpha u\right)\right)+\lo \left(1+X^2 \left(\beta u\right)\right)\right)\otimes 1 \nonumber\\
&+1\otimes \left(2\sum_{n=1}^{\infty}2\theta \bar{L}_{4n}u^{4n}+\lo \left(1+X^2 \left(\alpha u\right)\right)+\lo \left(1+X^2 \left(\beta u\right)\right)\right), \nonumber
\end{align}
which means $\mathring{L}_{4n}$ is primitive in our subquotient.

For the leading term of $[X_1, \mathring{L}_{4n}]$, by Lemma ~\ref{coefflemma} we have the leading term of $[X_1, \bar{L}_{2n}]$ is $\frac{(q^{2n}+(-1)^n)(q^n+(-1)^{n+1})}{nq^{n-1}(q^2-1)}X_{2n+1}$. Recall that the leading term of $[X_1, M_{2n+1}^2]$ is $(-1)^n 2^{2n}X_{4n+3}$. Then we have the leading term of $[X_1, \mathring{L}_{4n}]$ is
\begin{align}
&\frac{1}{2\theta}\left(2\theta \frac{(q^{4n}+1)(q^{2n}-1)}{2n q^{2n-1}(q^2-1)}+C_{4n}\Big(\frac{1}{2}\lo \big(1+(2\theta(\alpha u)^2\frac{1}{1+4\alpha^4u^4})^2\big)\Big)\nonumber \right.\\
&\left.+C_{4n}\Big(\frac{1}{2}\lo \big(1+(2\theta(\beta u)^2\frac{1}{1+4\beta^4u^4})^2\big)\Big)\right)X_{4n+1}\nonumber
\end{align}
Then by some combinatoric techniques we can prove the leading term of $[X_1, \mathring{L}_{4n}]$ is $\frac{[4n]_q}{4n}X_{4n+1}$, we omit this detail.
\end{proof}

\begin{propo} \label{propo:applyBCHDformula} Let $\mathbb{O}$ be the subalgebra in $B(\mathbb{V})$ generated by $M_{2n-1}$ for $n\in \mathbb{N}$. Then in the subquotient $K_{\ge 1}/K_{>1}$, $\Delta(\bar{L}_{2n})\in \bar{L}_{2n}\otimes 1+1\otimes \bar{L}_{2n}+\mathbb{O}\otimes \mathbb{O}$.
\end{propo}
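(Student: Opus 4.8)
The plan is to read off $\Delta(\bar L_{2n})$ from the identity for $\Delta(\tilde L(u))$ that already appears inside the proof of Proposition~\ref{propo:4n}, and then to strip away the ``correction factor'' by a Baker--Campbell--Hausdorff--Dynkin argument. Recall $\tilde L(u)=\ex(\bar L(u))$ with $\bar L(u)=\sum_{n\ge 1}\theta\bar L_{2n}u^{2n}$, and that in proving Proposition~\ref{propo:4n} one establishes, in $K_{\ge 1}/K_{>1}$,
$$\Delta(\tilde L(u))=*(u)=\#(u)\cdot\ex\big(\bar L(u)\otimes 1+1\otimes\bar L(u)\big),$$
where $\#(u)-1$ is a power series in $u$ of positive $u$-degree whose coefficients are polynomials in $A(u)$ and $B(u)$, hence lie in $\mathbb{O}\otimes\mathbb{O}$. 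Since $K_{\ge 1}/K_{>1}$ is a braided Hopf algebra, $\Delta$ is an algebra homomorphism for the braided product on the tensor square, so it intertwines $\ex$ and $\lo$ in the $u$-adic completion; applying this to $\tilde L(u)=\ex(\bar L(u))$ I would get
$$\Delta(\bar L(u))=\lo\big(\#(u)\cdot\ex(\bar L(u)\otimes 1+1\otimes\bar L(u))\big).$$

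Next I would set $Z:=\bar L(u)\otimes 1+1\otimes\bar L(u)$, write $\#(u)=\ex(V)$ with $V:=\lo(\#(u))\in(\mathbb{O}\otimes\mathbb{O})[[u]]$ of positive $u$-degree, and invoke the BCHD formula, which converges $u$-adically because $V$ and $Z$ both start in positive $u$-degree:
$$\lo\big(\ex(V)\ex(Z)\big)=Z+\big(\text{a sum of iterated brackets, each involving at least one }V\big).$$
It then suffices to show that every such bracket lies in $(\mathbb{O}\otimes\mathbb{O})[[u]]$. The key point I would isolate is the containment $[\mathbb{O},\bar L_{2n}]\subseteq\mathbb{O}$: the element $\bar L_{2n}$ has $\mathbb{Z}^2$-degree $2n\delta$ and $\chi(\delta,\delta)=-1$, so $\bar L_{2n}$ braids trivially with every homogeneous element of the subquotient; hence the braided bracket $[x,\bar L_{2n}]$ is the ordinary commutator, the Leibniz rule $[ab,c]=a[b,c]+[a,c]b$ holds, and since $[M_{2k-1},\bar L_{2n}]=-R_{2n}M_{2n+2k-1}\in\mathbb{O}$ by Corollary~\ref{corol:tilde}(c), an induction on the length of a monomial in the generators $M_{2k-1}$ gives $[\mathbb{O},\bar L_{2n}]\subseteq\mathbb{O}$. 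From this, $[\mathbb{O}\otimes\mathbb{O},Z]\subseteq\mathbb{O}\otimes\mathbb{O}$ (applying the bracket in each tensor slot), and $[\mathbb{O}\otimes\mathbb{O},\mathbb{O}\otimes\mathbb{O}]\subseteq\mathbb{O}\otimes\mathbb{O}$ since $\mathbb{O}\otimes\mathbb{O}$ is a subalgebra; so every BCHD bracket term lies in $(\mathbb{O}\otimes\mathbb{O})[[u]]$. Therefore $\Delta(\bar L(u))-Z\in(\mathbb{O}\otimes\mathbb{O})[[u]]$, and comparing the coefficients of $u^{2n}$ yields $\Delta(\bar L_{2n})\in\bar L_{2n}\otimes 1+1\otimes\bar L_{2n}+\mathbb{O}\otimes\mathbb{O}$.

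The hard part will be the bookkeeping that makes these formal steps rigorous: one must work in the $u$-adic completion of the braided tensor square of $K_{\ge 1}/K_{>1}$, check that the (braided) algebra map $\Delta$ genuinely intertwines $\ex$ and $\lo$ there, and track the braiding signs coming from $\chi(\delta,\delta)=-1$ carefully enough to be sure that the brackets appearing in BCHD are the ordinary commutators the formula requires. By contrast, verifying that $\#(u)-1$ has coefficients in $\mathbb{O}\otimes\mathbb{O}$ and positive $u$-degree is immediate from the explicit expression for $\#(u)$ recorded in the proof of Proposition~\ref{propo:4n}, and the inductive proof of $[\mathbb{O},\bar L_{2n}]\subseteq\mathbb{O}$ is routine, so neither of those involves real computation.
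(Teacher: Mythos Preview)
Your proposal is correct and follows essentially the same approach as the paper: write $\Delta(\bar L(u))=\lo\big(\#(u)\cdot\ex(\bar L(u)\otimes 1+1\otimes\bar L(u))\big)$, rewrite $\#(u)=\ex(\lo(\#(u)))$, and apply the Baker--Campbell--Hausdorff formula together with Corollary~\ref{corol:tilde}(c). The paper's own proof is in fact terser than yours---it simply cites BCH and Corollary~\ref{corol:tilde}(c) without spelling out the containment $[\mathbb{O},\bar L_{2n}]\subseteq\mathbb{O}$ or the $u$-adic bookkeeping---so your expansion of those points is a genuine addition of detail rather than a different route.
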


\begin{proof}
Following the notations in the last proposition, we have 
\begin{align}
\Delta(\bar{L}(u))&=\lo \left(\Delta(\tilde{L}(u))\right)=\lo(*(u)) \nonumber\\
&=\lo\left(\#(u)\cdot \ex(\bar{L}(u)\otimes 1+1\otimes \bar{L}(u))\right).\nonumber\\
&=\lo\Bigg(\ex\bigg(\lo(\#(u))\bigg)\cdot \ex \bigg(\bar{L}(u)\otimes 1+1\otimes \bar{L}(u)\bigg)\Bigg)
\end{align}
To deal with this type of formula, we have the famous Baker-Campbell-Hausdorff formula, see Chapter 3 of \cite{BCHDformulabook}. By BCH formula and Corollary ~\ref{corol:tilde} (c), this proposition holds immediately.
\end{proof}

\begin{corol}
 $L_{2n}$ are root vectors for $n\ge 1$ when q is not a root of unity.
\end{corol}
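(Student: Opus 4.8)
The plan is to combine the three kinds of imaginary root vector candidates at degree $2n\delta$ that we have just built — namely the powers $M_{2m+1}^2$, the elements $L_{4k+2}$, and the new primitives $\mathring{L}_{4n}$ — and to read off from the subquotient $K_{\ge 1}/K_{>1}$ that $L_{2n}$ cannot be expressed in terms of strictly smaller root vector candidates. By Corollary~\ref{corol:tilde}, the only root vector candidates at degree $2n\delta$ are (up to the obvious ordering on super-letters) products built from $L_{4k}$, $L_{4k+2}$ and $M_{2k+1}^2$; so it suffices to show $L_{2n}$ is not a linear combination of products $[v_j]^{m_j}\cdots[v_1]^{m_1}$ with all $v_i < $ the Lyndon word of $L_{2n}$. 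I would split into the cases $n=2k$ and $n=2k+1$, since the structure of $\mathring{L}_{4n}$ (built via logarithm from $\bar L_{4n}$ together with $\arctan$-type corrections coming from the $M_{2m+1}^2$) differs from that of $L_{4k+2}$.

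First I would fix $n$ and assume for contradiction that $L_{2n}$ is a linear combination $\Pi$ of products of strictly smaller root vector candidates in $B(\mathbb{V})$; passing to $\tilde L_{2n}$, then to $\bar L_{2n}$ via $\lo(1+\theta\sum \tilde L_{2n}u^{2n})=\theta\sum\bar L_{2n}u^{2n}$, and (for $n=4k$) to $\mathring L_{4n}$ via its defining power series, this would exhibit the \emph{primitive} element $\mathring L_{4n}$ (resp.\ the appropriate combination for $n\equiv 2\bmod 4$) as a linear combination $\Pi'$ of products of $\overline{M_{2k+1}^2}$, $\mathring L_{4k}$ and (lower) $L_{4k+2}$ in $K_{\ge 1}/K_{>1}$. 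Because all of $\overline{M_{2k+1}^2}$, $\mathring L_{4k}$ are primitive in $K_{\ge 1}/K_{>1}$ (Prop.~\ref{propo:2n+1square}, Prop.~\ref{propo:4n}) and are genuine root vectors in $B(\mathbb{V})$ (Corollary~\ref{corol:mul}, and the $n<2k$ cases of the present corollary, handled inductively), a product of two or more of them has a nontrivial component in $\mathbb{O}\otimes\mathbb{O}$ (using Prop.~\ref{propo:applyBCHDformula} to control $\Delta(\bar L_{2m})$, together with the comultiplication formula $\Delta(X)=(X\otimes 1+1\otimes X)\cdot\frac{1}{1-X\otimes X}$ for the $M$-part). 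Matching the primitive part forces $\Pi'$ to consist only of scalar multiples of single root vectors $\mathring L_{4k}$ or $L_{4k+2}$ with $k<n$, but those live at degree $4k\delta$ or $(4k+2)\delta$, not $2n\delta$ — a degree contradiction. Hence $\Pi'=0$, so the original $\mathring L_{4n}$ is $0$ in $B(\mathbb{V})$, contradicting Prop.~\ref{propo:4n} (resp.\ Lemma~\ref{coefflemma} for the coefficient being nonzero when $q$ is not a root of unity).

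I would run this as an induction on $n$: the base cases $n=1$ ($L_2$, which is a root vector by the explicit comultiplication $\Delta(L_2)=L_2\otimes 1-\theta L_1\otimes L_1+1\otimes L_2$ and Corollary~\ref{corol:Lnonzero}) and $n=2$ ($L_4$, with $M_{2m+1}^2$ handled separately) are immediate, and the inductive step is exactly the argument above, using that $L_{2m}$ for $m<n$ are already known to be root vectors so that the relevant $\mathring L_{4m}$, $L_{4m+2}$ are available as comparison elements with known primitivity/non-vanishing. The step I expect to be the main obstacle is bookkeeping the precise form of $\Delta(\bar L_{2n})$ modulo $\mathbb{O}\otimes\mathbb{O}$ and then disentangling, inside $K_{\ge 1}/K_{>1}$, the contribution of the $\arctan$-correction terms in the definition of $\mathring L_{4n}$ from the genuine $\bar L_{4n}$ part — i.e.\ making rigorous that no cancellation among the primitive elements $\overline{M_{2k+1}^2}$, $\mathring L_{4k}$, $L_{4k+2}$ can reproduce $\mathring L_{4n}$; this is where the nonvanishing of the leading coefficients in Prop.~\ref{propo:2n+1square}, Prop.~\ref{propo:4n} and Lemma~\ref{coefflemma} (all valid precisely because $q$ is not a root of unity) is essential, since it guarantees these elements are $\mathbb{Z}^2$-graded-linearly independent in $B(\mathbb{V})$.
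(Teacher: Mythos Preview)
Your inductive framework — pass from $L_{2n}$ to $\bar L_{2n}$ (and then to $\mathring L_{4n}$ when $2n\equiv 0\bmod 4$), and exploit primitivity in $K_{\ge 1}/K_{>1}$ — matches the paper's, but two concrete steps are missing and your outline would not close without them.

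\textbf{The $4m+2$ case.} You write ``the appropriate combination for $n\equiv 2\bmod 4$'' without saying what it is, and in fact the paper does \emph{not} construct a primitive element at degree $(4m+2)\delta$ built from $\bar L_{4m+2}$. The primitivity route is not how this case is handled. Instead, once one has eliminated the $\bar L_{2k}$ factors (via Prop.~\ref{propo:applyBCHDformula} plus the inductive hypothesis that the lower $L_{2k}$ are root vectors), one is left with $\bar L_{4m+2}$ expressed inside the subalgebra $\mathbb{O}$ generated by $L_1,M_3,\dots,M_{4m+1}$. Now apply $[\,\cdot\,,L_1]$: by Corollary~\ref{corol:tilde}(c) one gets $[\bar L_{4m+2},L_1]=R_{4m+2}M_{4m+3}$ with $R_{4m+2}=\frac{1}{2^{2m}}\frac{[2m+1]_q}{2m+1}\neq 0$, while bracketing any product of $L_1,M_3,\dots,M_{4m+1}$ with $L_1$ stays in the span of such products (by Corollary~\ref{mrelations}). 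This contradicts that $M_{4m+3}$ is a root vector. Your proposal does not supply this argument and the vague ``appropriate combination'' cannot be completed to one using only primitivity.

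\textbf{The $4m$ case.} Your passage from $\Pi$ to $\Pi'$ tacitly assumes the result is expressed in the primitives $\overline{M_{2k+1}^2}$, $\mathring L_{4k}$ and lower $L_{4k+2}$. But the original $\Pi$ may contain monomials such as $L_1 M_3$ or $L_1^3 M_5$ with \emph{odd} exponents on the $M_{2k+1}$, and the $\log$/$\arctan$ transforms defining $\bar L$ and $\mathring L$ do nothing to remove those. The paper inserts an extra step here: after reducing to $\bar L_{4m}\in\mathbb{O}$ as above, act by $[\bar L_2,\,\cdot\,]$. Since $[\bar L_2,M_{2k+1}]=M_{2k+3}$ and $[\bar L_2,\bar L_{4m}]=0$, any monomial with an odd $M$-exponent would produce, under this action, a nontrivial relation among root vectors $M_{2k+1}$ — a contradiction. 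Only after this are all exponents even, and then one may rewrite in the $\overline{M_{2k+1}^2}$ and invoke primitivity together with the nonvanishing of $\mathring L_{4m}$ (Prop.~\ref{propo:4n}). Your ``matching the primitive part'' argument does not go through before this parity reduction, because $M_1,M_3,\dots$ are not primitive in $K_{\ge 1}/K_{>1}$.
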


\begin{proof} 
We prove by induction on $n$. 

$L_2$ is a root vector because $[L_2, L_1]=M_3$ and $M_3$ is a root vector. 

For $L_4$, suppose $L_4$ is not a root vector, then $L_4=\sum k_{i_1,i_2,j_1}L_1^{i_1}M_3^{i_2}L_2^{j_1}$, or equivalently $\bar{L}_4=\sum k'_{i_1,i_2,j_1}L_1^{i_1}M_3^{i_2}\bar{L}_2^{j_1}$. By Prop. ~\ref{propo:applyBCHDformula} and $L_2$ is a root vector, $\bar{L}_2^2$ and $\bar{L}_1^2\bar{L}_2$ cannot appear in this linear combination. Then $\bar{L}_4$ can only be a linear combination of $L_1 M_3$ and $L_1^4$. Then use $[\bar{L}_2,\ \ \ ]$ act on $\bar{L}_4$, we get $M_3^2+L_1M_5=0$, contradict to $M_{2n-1}$ are root vectors for $n\in \mathbb{N}$. Then $\bar{L}_4$ can only equal to $k L_1^4$, equivalently $\mathring L_4=k (L_1^2)^2$. Since $\mathring L_4$ and $L_1^2$ are primitive in $K_{\ge 1}/K_{>1}$, then the only possibility is $\mathring L_4=0$, but by Prop.~\ref{propo:4n} $\mathring L_{4}$ is not 0. Then $L_{4}$ must be a root vector.

Suppose it holds for $n$, we consider $n+1$. 

If $n$ is even, then $2(n+1)=4m+2$ for $m=\frac{n}{2}$. Suppose $L_{4m+2}$ is not a root vector, then $$L_{4m+2}=\sum k_{i_1,i_2,...,i_{2m+1},j_1,...,j_{2m}}L_1^{i_1}M_3^{i_2}\cdots M_{4m+1}^{i_{2m+1}}L_2^{j_1}\cdots L_{4m}^{j_{2m}},$$ or equivalently $$\bar L_{4m+2}=\sum k'_{i_1,i_2,...,i_{2m+1},j_1,...,j_{2m}} L_1^{i_1}M_3^{i_2}\cdots M_{4m+1}^{i_{2m+1}}\bar{L}_2^{j_1}\cdots \bar{L}_{4m}^{j_{2m}}.$$ By Prop. ~\ref{propo:applyBCHDformula} and induction hypothesis, $\bar{L}_2,...,\bar{L}_{4m}$ cannnot appear. Then $$\bar L_{4m+2}=\sum k''_{i_1,i_2,...,i_{2m+1}} L_1^{i_1}M_3^{i_2}\cdots M_{4m+1}^{i_{2m+1}}.$$
By Corollary ~\ref{corol:tilde} (c), $[\bar{L}_{4m+2},L_{1}]=R_{4m+2}M_{4m+3}$. When q is not a root of unity, we know $R_{4m+2}$ is not 0 and $M_{4m+3}$ is a root vector, contradiction. Then $L_{4m+2}$ must be a root vector.

If $n$ is odd, then $2(n+1)=4m$ for $m=\frac{n+1}{2}$. Suppose $L_{4m}$ is not a root vector, then $$L_{4m}=\sum k_{i_1,i_2,...,i_{2m},j_1,...,j_{2m-1}}L_1^{i_1}M_3^{i_2}\cdots M_{4m-1}^{i_{2m}}L_2^{j_1}\cdots L_{4m-2}^{j_{2m-1}}$$ or equivalently $$\mathring L_{4m}=\sum k'_{i_1,i_2,...,i_{2m},j_1,...,j_{2m-1}} L_1^{i_1}M_3^{i_2}\cdots M_{4m+1}^{i_{2m}}\bar{L}_2^{j_1}\cdots \bar{L}_{4m-2}^{j_{2m-1}}.$$ For the same reason as last paragraph, $\bar{L}_2,...,\bar{L}_{4m-2}$ cannnot appear. Use $[\bar L_2,\ \ \ ]$ act on $\mathring L_{4m}$, similar to  the case of $L_4$ we get $$\mathring L_{4m}=\sum k''_{i_1,i_2,...,i_{2m}} L_1^{i_1}M_3^{i_2}\cdots M_{4m+1}^{i_{2m}},$$ where $i_1,i_2,...,i_{2m}$ must be even. Then equivalently $$\mathring L_{4m}=\sum k'''_{i_1,i_2,...,i_{2m}} (L_1^2)^{\frac{i_1}{2}}(\overline{M_3^2})^{\frac{i_2}{2}}\cdots (\overline{M_{4m+1}^2})^{\frac{i_{2m}}{2}}.$$ Since $\mathring L_{4m},L_1^2,\overline{M_3^2},...,\overline{M_{4m+1}^2}$ are primitive in $K_{\ge 1}/K_{>1}$, the only possibility is $\mathring L_{4m}=0$, but by Prop.~\ref{propo:4n} $\mathring L_{4m}$ is not 0. Then $L_{4m}$ must be a root vector.

\end{proof}

Now we have the following theorem immediately.

\begin{theor}
 Suppose $q$ is not a root of unity, the multiplicities of $4n \delta$ and $(2n+1)\delta $ are 1, of $(4n+2)\delta$ is 2, which coincide with the root multiplicities of the Lie superalgebra $A(0,2)^{(4)}$. The set of decreasing ordered products of elements in $\{X_n, Y_n, L_{2n},M_{2n-1},n\in \mathbb{N}\}$ form a PBW basis of $B(\mathbb{V})$. The order in $\{X_n, Y_n, L_{2n},M_{2n-1},n\in \mathbb{N}\}$ is \begin{align}
X_1&<X_2<\cdots<X_{n}<\cdots \nonumber \\
&<\cdots<L_{2n}<L_{2n-2}<L_{2n-4}<\cdots<L_2 \nonumber \\
&<\cdots<M_{2n-1}<M_{2n-3}<\cdots<M_5<M_3<M_1 \nonumber \\
&<\cdots<Y_n<\cdots<Y_2<Y_1. \nonumber 
\end{align}

\end{theor}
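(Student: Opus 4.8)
The plan is to deduce the theorem from the PBW basis theorem for Nichols algebras of diagonal type recalled in Section~\ref{sec:prelims}, once the full set $\mathbf{R}$ of root vectors of $B(\mathbb{V})$ has been pinned down; at this point essentially all the substantive work has already been done in Sections~2 and~3, so the task is mainly one of organization. First I would record the list of root vectors. By Corollary~\ref{corol:realrootvectors} the real root vectors are exactly the $X_n$ and $Y_n$, one for each positive real root $(n+1)\alpha_1+n\alpha_2$ and $n\alpha_1+(n+1)\alpha_2$. For the imaginary degrees, Corollary~\ref{corol:tilde} already restricts the Lyndon root vector candidates to $L_{4n}$ at $4n\delta$, to $L_{4n+2}$ and $M_{2n+1}^{2}$ at $(4n+2)\delta$, and to $M_{2n+1}$ at $(2n+1)\delta$; the results of Section~3 then confirm that each candidate genuinely is a root vector --- $M_{2n+1}$ from the analysis of the degree $(2n+1)\delta$, $M_{2n+1}^{2}$ from Corollary~\ref{corol:mul}, and $L_{2n}$ from the corollary immediately preceding this theorem, these going in each case through the nonvanishing of a primitive logarithmic or arctangent element of $K_{\ge 1}/K_{>1}$ (Propositions~\ref{propo:2n+1square} and~\ref{propo:4n}). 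Hence $\mathbf{R}=\{X_n,\ Y_n,\ L_{2n},\ M_{2n-1}^{2},\ M_{2n-1}:n\in\mathbb{N}\}$.

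Next I would apply the PBW theorem with this $\mathbf{R}$. Since $\chi(\delta,\delta)=-1$ we have $\chi(m\delta,m\delta)=(-1)^{m}$, and for a real root $\alpha$ the self-braiding $\chi(\alpha,\alpha)$ is not a root of unity; thus $X_n$, $Y_n$, $L_{2n}$ and $M_{2n-1}^{2}$ all have infinite height, whereas $M_{2n-1}$ has self-braiding $-1$ and hence height $2$ --- which is exactly why $M_{2n-1}^{2}$ occurs as an extra, independent root vector. Writing $M_{2n-1}^{2a+b}=(M_{2n-1}^{2})^{a}M_{2n-1}^{b}$ with $b\in\{0,1\}$, the PBW monomials turn into decreasing ordered products of elements of $\{X_n,Y_n,L_{2n},M_{2n-1}\}$ with unrestricted multiplicities, the form claimed in the theorem. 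The order is just the restriction to $\mathbf{R}$ of the total order on super-letters displayed in Section~\ref{sec:prelims}: the vectors $L_{2k+1}$ with $k\ge1$ drop out because they are not root vectors (Proposition~\ref{propo1} and the remark following it), leaving only $L_{2n}$ among the $L$'s, while $M_{2n-1}^{2}$ may be placed directly next to $M_{2n-1}$; this reproduces the displayed chain $X_1<X_2<\cdots<L_{2n}<\cdots<L_2<\cdots<M_{2n-1}<\cdots<M_1<\cdots<Y_n<\cdots<Y_1$.

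Finally, the root multiplicities are read off by counting the elements of $\mathbf{R}$ in each degree: one real root vector at each real root, $L_{4n}$ alone at $4n\delta$, the pair $L_{4n+2}$ and $M_{2n+1}^{2}$ at $(4n+2)\delta$, and $M_{2n+1}$ alone at $(2n+1)\delta$, i.e.\ multiplicities $1,1,2,1$ respectively, which one checks coincide with the root multiplicities of the affine Lie superalgebra $A(0,2)^{(4)}$ (see \cite{EG}). The genuinely hard steps --- showing that all four families of candidates really are root vectors and that no others exist --- are already settled in Sections~2--3, so the only point that needs care here is the combinatorial bookkeeping separating the height-$2$ generator $M_{2n-1}$ from its square $M_{2n-1}^{2}$ in the PBW monomials, together with checking that the restricted order is admissible for the PBW theorem; once that is in place the comparison with the classical multiplicities is immediate.
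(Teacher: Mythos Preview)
Your proposal is correct and follows exactly the route the paper intends: the paper states that the theorem follows ``immediately'' from the preceding results and gives no further argument, and what you have written is precisely the organization of Corollary~\ref{corol:realrootvectors}, Corollary~\ref{corol:tilde}, the analysis of $(2n+1)\delta$, Corollary~\ref{corol:mul}, and the corollary on $L_{2n}$ into an application of the Lyndon PBW theorem. Your handling of the height-$2$ generator $M_{2n-1}$ versus its square is the only bookkeeping needed to pass from the set $\mathbf{R}$ to the form of the basis stated, and you treat it correctly.
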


\begin{remar} 
In Drinfeld second realization of $U_v(\hat{sl_2})$, there are $\bar{L}_n$ and $\sum_{n=0}^{\infty}\theta L_n=exp(\sum_{n=1}^{\infty}\theta \bar{L}_n)$. These $\bar{L}_n$ are primitive in $K_{\ge 1}/K_{>1}$ and satisfy $[X_n, \bar{L}_m]=\frac{[m]_q}{m}X_{n+m}$ in $U_v(\hat{sl_2})$, they are called Drinfeld generators.

Similarly, in Prop.~\ref{propo:2n+1square} and ~\ref{propo:4n} we get the leading term of $[X_1, \overline{M_{2n+1}^{2}}]$ is $(-1)^n 2^{2n}\frac{[2n+1]_q}{2n+1}X_{4n+3}$ and the leading term of $[X_1, \mathring{L}_{4n}]$ is $\frac{[4n]_q}{4n}X_{4n+1}$. In fact according to specific calculation in low degrees, $[X_1, \overline{M_{2n+1}^{2}}]=(-1)^n 2^{2n}\frac{[2n+1]_q}{2n+1}X_{4n+3}$ and $[X_1, \mathring{L}_{4n}]=\frac{[4n]_q}{4n}X_{4n+1}$ should hold but the author has not yet obtained a proof. If these hold, then $[X_k, \overline{M_{2n+1}^{2}}]$ = $(-1)^n 2^{2n}\frac{[2n+1]_q}{2n+1}X_{4n+2+k}$ and $[X_k, \mathring{L}_{4n}]$ = $\frac{[4n]_q}{4n}X_{4n+k}$, $\forall k\ge1$.

However here we can not get a full analog of Drinfeld second realization because for $M_{2n+1}$ and $\bar{L}_{4n+2}$ (non-central elements in the subquotient) it is impossible to get this kind of generators. Probably the relations between real root vectors and $M_{2n+1}$, $\bar{L}_{4n+2}$ can be described nicely in form of series.
\end{remar}

\section{The isomorphism of $B(\mathbb{V})$ and $U_{v}(A(0,2)^{(4)})^{+}$}

\begin{theor}

$B(\mathbb{V})$ is generated by Chevally generators $X_1$ and $Y_1$ with quantum Serre relations when $q$ is not a root of unity, which implies that $B(\mathbb{V})$ and $U_{v}(A(0,2)^{(4)})^{+}$ are isomorphic, where $q=v^2$. 

\end{theor}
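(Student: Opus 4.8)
The plan is as follows. Write $\mathcal{U}:=U_{v}(A(0,2)^{(4)})^{+}$, regarded as $T(\mathbb{V})$ modulo the ideal generated by the two quantum Serre relations $[X_1,X_2]$ and $[Y_2,Y_1]$. Since these relations lie in the radical of the canonical bilinear form (equivalently, they are primitive in $T(\mathbb{V})$, using that $q$ is not a root of unity), there is a graded surjection of braided Hopf algebras $\pi:\mathcal{U}\twoheadrightarrow B(\mathbb{V})$ with $\pi(X_1)=x_1$, $\pi(Y_1)=x_2$. Because $\pi$ is graded and surjective, it is an isomorphism as soon as $\dim_{\mathcal{K}}\mathcal{U}^{\alpha}\le\dim_{\mathcal{K}}B^{\alpha}(\mathbb{V})$ for every $\alpha\in\mathbb{N}_0^{2}$, so the whole problem reduces to this dimension estimate.

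First I would define, inside $\mathcal{U}$ and by exactly the same iterated-bracket formulas, the elements $X_n,Y_n,L_n,L_n',M_{2n-1},\tilde L_n,\bar L_{2n},\dots$, so that $\pi$ sends each to its counterpart in $B(\mathbb{V})$. Then I would run the usual Lyndon-word straightening argument in $\mathcal{U}$: over $T(\mathbb{V})$ every word in $x_1,x_2$ is, modulo lexicographically smaller words, a decreasing product of super-letters $[u]$; using the relations of Section~2 one rewrites every super-letter $[u]$ whose $u$ is not a power of one of the Lyndon words attached to the root vectors $X_n,Y_n,L_{2n},M_{2n-1}$ as a linear combination of products of strictly larger super-letters (for instance $2L_3=\theta L_1L_2'+\theta L_2L_1-M_3$ eliminates $L_3$, and the recursions of Prop.~\ref{propo:central} eliminate all $L_{2k+1}$ and $\hat L_{2k+1}$), and one sorts a product of these root vectors into the fixed total order at the cost of strictly larger super-letters only, using the commutation relations $[X_k,L_{2n}]$ and $[X_k,M_{2n-1}]$ (Prop.~\ref{propo:realandim}), $[\tilde L_{2m},\tilde L_{2n}]=0$ (Prop.~\ref{propo:someimaginaryrootvectorscomm}), $[M_{2m+1},M_{2n+1}]$ (Cor.~\ref{mrelations}, Cor.~\ref{corol:tilde}) and $[X_m,Y_n]$. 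Since at each fixed $\mathbb{N}_0^2$-degree there are only finitely many super-letters and finitely many monomials, this rewriting terminates and shows that the PBW monomials span $\mathcal{U}$; as their images form a basis of $B(\mathbb{V})$ by the root-multiplicity theorem of Section~3, this gives $\dim_{\mathcal{K}}\mathcal{U}^{\alpha}\le\dim_{\mathcal{K}}B^{\alpha}(\mathbb{V})$ and hence the desired isomorphism.

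The hard part will be justifying that every relation used in the straightening is a \emph{formal consequence of the Serre relations alone}. For the bulk of Section~2 this is apparent from the proofs: they invoke only the braided Jacobi identity~(1.1) and the skew-derivation identities of Section~1.1 — which hold in any $\mathbb{N}_0^2$-graded quotient of $T(\mathbb{V})$, so in $\mathcal{U}$ — together with division by scalars such as $q+q^{-1}$, $q^{2}+q+1$ or the determinants of Lemma~\ref{lemmaseries}, all invertible since $q$ is not a root of unity; crucially, none of these steps uses the differential-operator criterion ``$x=0\Leftrightarrow\partial_i^{R}(x)=0$'', which is proper to the Nichols algebra and has no analogue in $\mathcal{U}$. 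The genuinely delicate point is that the relations between real and imaginary root vectors — Lemma~\ref{lemma:diff} and Prop.~\ref{propo:realandim} — were obtained in Section~2 precisely by applying $\partial_i^{R},\partial_i^{L}$, which do not descend to $\mathcal{U}$. I would re-derive those identities directly in $\mathcal{U}$, by induction on $n$ in parallel with the inductions already present: each one, once the definition of the skew derivation is unwound, is an algebraic rearrangement of an already-lifted relation (e.g. $[X_2,L_{2n+1}]$ is recovered from $[L_{2n+1},L_1^{2}]=0$ together with the skew-derivation identities and the explicit degree-$\le 4\delta$ relations of Prop.~\ref{propo1}). Completing this re-derivation for the whole family in Prop.~\ref{propo:realandim} — and checking that the cancellations of the $Y_aL_bX_c$-type terms appealed to in the proof of $[\tilde L_{2m},\tilde L_{2n}]=0$ are identities already in $\mathcal{U}$, not only in $B(\mathbb{V})$ — is where the real effort lies; granting it, the remaining two steps of the plan go through verbatim and $\pi$ is an isomorphism.
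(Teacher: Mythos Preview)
Your overall plan---the graded braided Hopf surjection $\pi:\mathcal{U}\twoheadrightarrow B(\mathbb{V})$, reduction to a PBW spanning set for $\mathcal{U}$, and the identification of Prop.~\ref{propo:realandim} as the only place in Section~2 where something beyond braided Jacobi identities was invoked---is exactly the paper's strategy.

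One point should be corrected. Your claim that $\partial_i^{R},\partial_i^{L}$ \emph{do not descend} to $\mathcal{U}$ is wrong, and in fact contradicts your own first paragraph. You already noted that the Serre relations are primitive in $T(\mathbb{V})$; hence the ideal they generate is a braided Hopf ideal, $\mathcal{U}$ is a pre-Nichols algebra, and the skew derivations---which can be read off the coproduct via $(\mathrm{id}\otimes\pi_1)\circ\Delta$---are well defined on $\mathcal{U}$. What is special to the Nichols algebra is only the \emph{criterion} $x=0\Leftrightarrow\partial_i^{R}(x)=0$ of Prop.~1.3, and you correctly observed that Section~2 never uses that direction. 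So the proofs of Lemma~\ref{lemma:diff} and Prop.~\ref{propo:realandim} lift to $\mathcal{U}$ verbatim, and your ``genuinely delicate point'' dissolves; no unwinding is required.

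The paper does take a slightly different route for this step, which is worth knowing. Rather than lifting the $\partial_i$-argument, it obtains \emph{some} ordered-product expression for $[X_2,L_{2n}]$ purely from Serre-derivable relations: apply $[X_1,\,\cdot\,]$ to the identity $2L_{2n+1}'-\theta L_{2n}'L_1=\theta L_1L_{2n}+[L_2,L_{2n-1}']$ of Prop.~\ref{propo:central}, divide by $q+q^{-1}$, and reduce using Prop.~\ref{propo:central} again. This yields an identity in $\mathcal{U}$ expressing $[X_2,L_{2n}]$ in the PBW monomials; it need not visibly coincide with the formula in Prop.~\ref{propo:realandim}. But both formulas hold in $B(\mathbb{V})$, and the PBW monomials are linearly independent there, so the two sets of coefficients are forced to agree---hence the Prop.~\ref{propo:realandim} formula itself holds in $\mathcal{U}$. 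This comparison-via-$B(\mathbb{V})$ trick also disposes of your worry about the $Y_aL_bX_c$ cancellations in the proof of $[\tilde L_{2m},\tilde L_{2n}]=0$: the cancellation is a linear relation among PBW monomials valid in $B(\mathbb{V})$, hence a relation among scalars, hence valid in $\mathcal{U}$. Either route---your direct lift once the descent of $\partial_i$ is recognised, or the paper's comparison argument---closes the proof.
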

\begin{proof}
According to previous sections, we have determined all the root vectors of  $B(\mathbb{V})$. It is equivalent to prove that all the commuting relation between these root vectors come from quantum Serre relations. It is enough to prove that the relations in Prop.~\ref{propo:realandim} can be derived by quantum Serre relations. We will only prove that first half  because the other half is completely similar. We omit some tedious details. The main difficulty is to prove the expression of $[X_2,L_{2n}]$  for $n>1$ in Prop.~\ref{propo:realandim} can be derived by quantum Serre relations. Relations in Prop.~\ref{propo:central} are all come from quantum Serre relations, and we know $2L_{2n+1}'-\theta L_{2n}'L_1=\theta L_1 L_{2n}+[L_2, L_{2n-1}']$. We use $[X_1, \quad \quad]$ act on this equation, we will get the result of $(q+q^{-1})[X_2, L_{2n}]$ and since q is not a root of unity, we will get the expression of $[X_2, L_{2n}]$ in terms of ordered products of root vectors by Prop.~\ref{propo:central}. On the other hand,  also by Prop.~\ref{propo:central}, it is enough to use 
\begin{align}
[X_2,L_{2n}]=&q^{2n-1}X_{2n+2}+q^{2n-2}\theta L_1'X_{2n+1}+q^{2n-3}\theta L_2X_{2n}\nonumber\\
&+\cdots+q\theta L_{2n-2}X_4+\theta L_{2n-1}'X_3\nonumber
\end{align}to get the expression of $[X_2,L_{2n}]$ in terms of  ordered products of root vectors. Then these two expressions must be the same. This means 
the expression of $[X_2,L_{2n}]$ in Prop.~\ref{propo:realandim} also come from quantum Serre relations. 
\end{proof}

\section*{Acknowledgements}

This paper was written during the visit of the author to University of Marburg. The author thanks his adviser  Prof. Istvan Heckenberger for many helpful discussions. The author thanks  his adviser Prof. Gongxiang Liu in Nanjing University for guideness on Kac-Moody algebras and Nichols algebras. The author also thanks  Prof. Hiroyuki Yamane for pointing out the main difficulty on $U_{v}(A(0,2)^{(4)})$ is imaginary root vectors do not necessarily commute with each other.


\providecommand{\bysame}{\leavevmode\hbox to3em{\hrulefill}\thinspace}
\providecommand{\MR}{\relax\ifhmode\unskip\space\fi MR }
\providecommand{\MRhref}[2]{%
  \href{http://www.ams.org/mathscinet-getitem?mr=#1}{#2}
}
\providecommand{\href}[2]{#2}

\end{document}